\tikzstyle{dotdotdot}=[fill=white, draw=black, shape=rectangle]
\tikzstyle{new style 0}=[fill=black, draw=none, shape=circle]
\tikzstyle{new edge style 0}=[-, densely dashed]
\tikzstyle{new edge style 2}=[-, draw=blue]
\tikzstyle{new edge style 3}=[->]
\tikzstyle{green}=[text={black!30!green}]
\tikzstyle{blue}=[text=blue]
\tikzstyle{red}=[text=red]
\tikzstyle{puncture}=[fill=white, draw=red, shape=circle, minimum size=1pt]
\tikzstyle{blackpuncture}=[fill=white, draw=black, shape=circle, minimum size=1pt]
\tikzstyle{cyan}=[text=cyan]
\tikzstyle{markings}=[-, draw=red, line width=1pt, line cap=round]
\tikzstyle{overbraid}=[-, draw=white, fill=none, line width=6pt]
\tikzstyle{thick}=[-, line width=2pt, draw=blue]
\tikzstyle{dashedline}=[-, dashed]
\tikzstyle{dottedline}=[-, dash pattern=on 0.75pt off 0.75pt, line width=0.75pt]
\tikzstyle{thin red}=[-, line width=0.25pt, draw=black]
\tikzstyle{tangle}=[-, draw=blue, line width=1pt, fill={blue!20}]
\tikzstyle{scc}=[-, draw={black!30!green}, fill={blue!20}, line width=1pt]
\tikzstyle{inner boundary}=[-, fill=white]
\tikzstyle{outer boundary}=[-, fill={red!20}]
\tikzstyle{lowerboundery}=[-, line width=1.5pt, line cap=round, draw=red]
\tikzstyle{upperboundery}=[-, line width=1.5pt, line cap=round, draw=blue]
\tikzstyle{dottedcycle}=[-, draw=blue, dash pattern=on 0.5pt off 1pt on 4pt off 1pt, decoration={markings, mark=at position 0.5 with {\arrow{>}}}, postaction=decorate]
\tikzstyle{cycle}=[-, draw=blue, decoration={markings, mark=at position 0.5 with {\arrow{>}}}, postaction=decorate]
\tikzstyle{path}=[-, draw=cyan, line width=0.25pt]
\tikzstyle{arrowpath}=[-, draw=cyan, line width=0.25pt, decoration={markings, mark=at position 0.5 with {\arrow{>}}}, postaction=decorate]
\tikzstyle{orientedpath}=[-, line width=0.25pt, decoration={markings, mark=at position 0.5 with {\arrow{<}}}, postaction=decorate]
\tikzstyle{inner square}=[-, fill={blue!20}]
\tikzstyle{outer square}=[-, fill={red!20}]
\tikzstyle{blueline}=[-, draw=blue]
\tikzstyle{greenline}=[-, draw=green]
\tikzstyle{bluesquare}=[-, draw=blue, fill={blue!20}]
\numberwithin{figure}{section}
\newtheorem{theorem}{Theorem}[section]
\newtheorem*{theorem*}{Theorem}
\newtheorem{lemma}[theorem]{Lemma}
\newtheorem*{lemma*}{Lemma}
\newtheorem{proposition}[theorem]{Proposition}
\newtheorem*{proposition*}{Proposition}
\newtheorem{conjecture}[theorem]{Conjecture}
\newtheorem*{conjecture*}{Conjecture}
\newtheorem{corollary}[theorem]{Corollary}
\newtheorem*{corollary*}{Corollary}
\theoremstyle{remark}
\newtheorem{remark}[theorem]{Remark}
\newtheorem{notation}[theorem]{Notation}
\newtheorem*{question*}{Question}
\theoremstyle{definition}
\newtheorem{definition}[theorem]{Definition}
\newtheorem*{definition*}{Definition}
\newcommand{\Cat}{\mathcal{C}}
\newcommand{\dol}{\overline{\partial}}
\newcommand{\cA}{\mathcal{A}}
\newcommand{\Cinf}{C^\infty}
\newcommand{\cE}{\mathcal{E}}
\newcommand{\uE}{\underline{E}}
\newcommand{\uG}{\underline{G}}
\newcommand{\Mod}{\mathfrak{mod}}
\newcommand{\Modcat}{\mathfrak{Mod}}
\newcommand{\SO}{\mathrm{SO}(3)}
\newcommand{\SU}{\mathrm{SU}(2)}
\newcommand{\sltwo}{\mathfrak{sl}_2}
\newcommand{\lra}{\longrightarrow}
\newcommand{\ra}{\rightarrow}
\newcommand{\Nu}{\mathcal{V}}
\newcommand{\ad}{\mathrm{ad}\,}
\newcommand{\PMod}[1]{\mathrm{PMod}(#1)}
\newcommand{\Modu}[1]{\mathrm{Mod}(#1)}
\newcommand{\tildeMod}[1]{\mathrm{M\tilde{o}d}(#1)}
\newcommand{\Modl}[2]{\mathrm{Mod}^{#1}(#2)}
\newcommand{\tModl}[2]{\mathrm{M\tilde{o}d}^{#1}(#2)}
\newcommand{\Mg}[2]{\mathcal{M}_{{#1},{#2}}}
\newcommand{\tMgb}[2]{\widetilde{\mathcal{M}}_{#1}^{#2}}
\newcommand{\oMg}[2]{\overline{\mathcal{M}}_{{#1},{#2}}}
\newcommand{\hatMg}[2]{\widehat{\mathcal{M}}_{{#1},{#2}}}
\newcommand{\Mgrp}[3]{\overline{\mathcal{M}}_{{#1},{#2}}(#3)}
\newcommand{\Cgrp}[3]{\overline{\mathcal{C}}_{{#1},{#2}}(#3)}
\newcommand{\Mgrb}[3]{\overline{\mathcal{M}}_{#1}^{#2}(#3)}
\newcommand{\Mgrbt}[4]{\overline{\mathcal{M}}_{#1}^{#2}(#3,#4)}
\newcommand{\Surf}{\mathrm{Surf}_\Lambda}
\newcommand{\tSurf}{\mathrm{S\tilde{u}rf}_\Lambda}
\newcommand{\Br}[1]{\mathrm{B}\mu_{#1}}
\newcommand{\End}[1]{\mathrm{End}(#1)}
\newcommand{\GL}[1]{\mathrm{GL}(#1)}
\newcommand{\GLn}[2]{\mathrm{GL}_{#1}(#2)}
\newcommand{\PU}[2]{\mathrm{PU}(#1,#2)}
\newcommand{\Aut}[1]{\mathrm{Aut}(#1)}
\newcommand{\Z}{\mathbb{Z}}
\newcommand{\C}{\mathbb{C}}
\newcommand{\Q}{\mathbb{Q}}
\newcommand{\id}{\mathrm{id}}
\newcommand{\eps}{\epsilon}
\newcommand{\ul}{\underline{\lambda}}
\newcommand{\umu}{\underline{\mu}}
\newcommand{\unu}{\underline{\nu}}
\newcommand{\um}{\underline{m}}
\newcommand{\uk}{\underline{k}}
\newcommand{\ueps}{\underline{\epsilon}}
\newcommand{\uzero}{\underline{0}}
\newcommand{\Lg}{\mathfrak{g}}
\title[Rigidity of \texorpdfstring{$\SU$}{SU(2)} quantum representations at prime levels]{Rigidity of \texorpdfstring{$\mathbf{SU(2)}$}{SU(2)} and \texorpdfstring{$\mathbf{SO(3)}$}{SO(3)} quantum representations of mapping class groups at prime levels}
\author{Pierre Godfard}
\begin{document}

\begin{abstract}
  We prove the rigidity of Witten-Reshetikhin-Turaev \texorpdfstring{$\SU$}{SU(2)} and \texorpdfstring{$\SO$}{SO(3)}
  quantum representations of mapping class groups at all prime levels for closed surfaces of genus at least \texorpdfstring{$7$}{7}.
  The proof relies on Ocneanu rigidity of modular categories and harmonic representatives in Hodge theory.
\end{abstract}

\maketitle


\section{Introduction}


It is expected that quantum representations of mapping class groups are rigid in genus $g\geq 3$, because of
Kazhdan’s property (T), yet to be proved for mapping class groups.
In this paper, we show rigidity
of Witten-Reshetikhin-Turaev $\SU$ and $\SO$ quantum representations at all prime levels for closed surfaces of genus at least \texorpdfstring{$7$}{7}.

We also provide detailed proofs of several connections among rigidity conjectures for mapping class groups $\Modu{S_g^n}$ as the parameters $g$ and $n$ vary.
These connections, while essentially known to experts and extending earlier results of Putman and Wieland on the Ivanov conjecture
\cite[step 2 of Lem. 3.2, Lem. 3.1]{putmanAbelianQuotientsSubgroups2013}, have not, to our knowledge,
been made explicit in the literature.
These relations emerge naturally from a key step in the proof of our main result.


\subsection{The main result on rigidity of \texorpdfstring{$\mathbf{SU(2)}$}{SU(2)} and \texorpdfstring{$\mathbf{SO(3)}$}{SO(3)} quantum representations at prime levels}

The mapping class group $\Modu{S}$ of a compact surface $S$ is the group 
$\pi_0(\mathrm{Homeo}^+(S,\partial S))$
of connected components of the group of orientation-preserving homeomorphisms of $S$
that fix the boundary $\partial S$ pointwise.
The property (T) conjecture for mapping class groups predicts that 
any finite dimensional unitary representations $\rho$ of $\Modu{S_g^n}$ for $g\geq 3$ is rigid,
i.e. $H^1(\Modu{S_g^n},\ad\rho)=0$.
While this question has been well studied for representations $\rho$ with finite image,
(then known as the Ivanov conjecture),
much less is known about unitary representations with infinite image (see \Cref{subsectionintroconjectures} for more context on these conjectures).
Such representations are rarer than those with finite image, and the most fruitful
way to construct them to date is through the theory of quantum representations.

Quantum representations are not constructed individually but rather in collections:
Reshetikhin-Turaev theory \cite{reshetikhinInvariants3manifoldsLink1991,turaevQuantumInvariantsKnots2016} associates to each modular category $\Cat$
a collection of projective representations $\rho_g(\lambda_1,\dotsc,\lambda_n):\Modu{S_g^n}\ra\mathrm{PGL}_{d_g(\lambda_1,\dotsc,\lambda_n)}(\C)$,
indexed by the genus $g$, the number of boundary components $n$, and the choice, for each boundary component $i\in \{1,2,\dotsc,n\}$,
of a simple object $\lambda_i$ of $\Cat$. Moreover, these representations exhibit compatibilities with respect to
the natural morphisms between mapping class groups. The data consisting of this collection of representations together with their compatibilities
forms a \emph{modular functor}, a structure that is equivalent to that of modular fusion category
\cite{bakalovLecturesTensorCategories2000,etingofRigidityNonnegligibleObjects2024}.
These $\rho_g(\lambda_1,\dotsc,\lambda_n)$ are always finite-twists representations,
and are always semisimple (see \cite{godfardSemisimplicityConformalBlocks2025a}).

In this article, we focus on the so-called $\SU$ and $\SO$ quantum representations.
The $\SU$ modular functors are indexed by an even integer $2\ell$ called level.
The $\SU$ modular category at level $2\ell$ has simple objects indexed by the integers $0,1,\dotsc,\ell-2$.
When $\ell$ is odd, it admits a variant, called the $\SO$ modular functor at level $\ell$.
The simple objects are then indexed by the even integers $0,2,\dotsc,\ell-3$.%
\footnote{Note that $\SU$ and $\SO$ \emph{do not} refer to the target groups of the corresponding quantum representations.}

More generally, modular categories can be associated to each choice of a semisimple Lie algebra $\Lg$ over $\C$
and an integer $\ell_c\geq 1$. The $\SU$ and $\SO$ categories are then variants of the case $\Lg=\sltwo$ for $\ell_c=\ell-2$.
The general construction relies either on quantum group representation theory or on the representation theory
of affine Lie algebras (Wess-Zumino-Witten conformal blocks), see \cite[§3.3, §7]{bakalovLecturesTensorCategories2000}.
In the $\SU$ and $\SO$ cases, a more direct and practical construction of the representations via skein module theory
is developed in \cite{blanchetThreemanifoldInvariantsDerived1992}.

The $\SU$ and $\SO$ representations have nice properties: they are unitary,
irreducible when $\ell$ is prime, and often have infinite image (this is the case, for example, for $\rho_g$ when $g\geq 2$ and $\ell\geq 5$ is prime).
The question of their rigidity in genus $g\geq 3$ therefore falls within the scope of the property (T) conjecture.
The main result of this paper is to verify this rigidity for $\rho_g$ when $\ell$ is prime and $g\geq 7$.

\begin{theorem*}[{\ref{mainresult}}]
 Let $\ell\geq 3$ be a prime number. Denote by $(\rho_g(\ul))_{g,\ul}$ the collection of
 $\SU$ quantum representations at level $2\ell$ or $\SO$ at level $\ell$.
 Then, for $g\geq 7$, $\rho_g$ is infinitesimally rigid, that is, $H^1(\Modu{S_g},\ad \rho_g)=0$.
\end{theorem*}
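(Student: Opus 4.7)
The plan is to combine Ocneanu rigidity of the $\SU$/$\SO$ modular fusion category with Hodge-theoretic properties of the associated local systems of conformal blocks on moduli spaces of pointed curves. The argument proceeds in three stages.

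First, I would translate a non-trivial class in $H^1(\Modu{S_g},\ad\rho_g)$ into a non-trivial first-order deformation of the local system on $\Mg{g}{0}$ whose monodromy is $\rho_g$, using the standard dictionary between group cohomology with twisted coefficients and first-order deformations of representations of the orbifold fundamental group. A K\"ahler metric on moduli allows a canonical harmonic representative of the cohomology class, which behaves well under pullback along boundary and gluing maps between moduli spaces.

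Second, I would propagate the deformation to deformations of the entire collection $\{\rho_{g'}(\ul)\}_{g',\ul}$ indexed by genera, boundary configurations and boundary labels. The mechanism is a Putman--Wieland style transfer: cutting $S_g$ along suitably chosen simple closed curves and invoking the gluing axioms of the modular functor realizes $\rho_g$ as a block in the restriction of $\rho_{g'}(\ul)$ to a natural subgroup of $\Modu{S_{g'}^n}$, so a deformation upstairs can be read off as a deformation downstairs. The hypothesis $g\geq 7$ enters here, as it gives enough room to carry out the required decompositions while keeping all pieces in a genus range where the auxiliary rigidity inputs are available.

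Third, I would assemble the propagated family into an infinitesimal deformation of the modular functor itself --- equivalently, of the underlying modular fusion category. Ocneanu rigidity forces the categorical part of this deformation to vanish, and the harmonic representatives produced in the first step ensure that the residual, non-categorical part vanishes as well, by combining the Hodge-theoretic decomposition of $H^1$ on $\Mg{g}{n}$ with the constraints imposed by the gluing compatibilities.

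The main obstacle will be the compatibility between the second and third stages: verifying that the deformations transported to various $(g',\ul)$ by cutting $S_g$ really assemble into a coherent first-order deformation of the modular functor in the sense needed to apply Ocneanu rigidity. The detailed relations between rigidity conjectures at different $(g,n)$ announced in the introduction are precisely what makes this assembly possible, and bridging the categorical rigidity of Ocneanu with the analytic rigidity coming from Hodge theory is the most delicate point of the argument.
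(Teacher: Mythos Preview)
Your broad architecture is right --- harmonic representatives to lift deformation classes, propagation via gluing, Ocneanu rigidity to kill the result --- but there is a structural gap between stages two and three that you have not closed. You cannot propagate the class $\psi\in H^1(\Modu{S_g},\ad\rho_g)$ to ``the entire collection'' $\{\rho_{h}(\ul)\}_{h,\ul}$. The propagation works by embedding $(S_h^n,\ul)$ together with a complementary surface into $S_g$ and projecting via K\"unneth; for the resulting class $\psi_h(\ul)$ to be well-defined (independent of the embedding) one needs the complement to be connected, and this forces $h$ to be at most roughly $g/2$. So what you actually build is a deformation of a \emph{truncated} modular functor, indexed only by a set $I$ of low-genus triples. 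Ocneanu rigidity then kills all the $\psi_h(\ul)$ for $(h,n,\ul)\in I$ --- but $(g,0,())\notin I$, so this says nothing directly about the original $\psi$.

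The missing ingredient is a separate \emph{partial homological stability} step: an injectivity result $H^1(\Mgrb{h+1}{}{r},\ad\Nu_{h+1})\hookrightarrow H^1(\Mgrb{h}{}{r},\ad\Nu_h)$ for $h\geq 4$, which lets you conclude $\psi=0$ from $\psi_{g'}=0$ with $g'\approx g/2$. This is not a consequence of the Hodge decomposition of $H^1$ plus gluing constraints as you suggest; it requires its own Hodge-theoretic lemma (if $Y\subset X$ is a subvariety with $\pi_1(Y)\twoheadrightarrow\pi_1(X)$ and $\hat Y\to Y$ a resolution, then $H^1(X,\uE)\to H^1(\hat Y,\uE)$ is injective for semisimple $\uE$), applied to carefully chosen gluing strata. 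This is where the bound $g\geq 7$ actually comes from: one needs $g\geq 2g'-1$ for the truncation and $g'\geq 4$ for stability. You should also note that the propagation step makes essential use of the \emph{irreducibility} of every $\rho_h(\ul)$ (this is why $\ell$ must be prime): without it, the K\"unneth projection onto $H^1\otimes H^0$ does not land in a one-dimensional target and the classes $\psi_h(\ul)$ are not well-defined.
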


The result for $\ell=5$ is contained in the article \cite{godfardRigidityFibonacciRepresentations2025},
whose method reduces rigidity to finitely many cases in genus $0$ and $1$ that can be handled by hand.
This method is limited by the fact that rigidity in genus $0$ fails for $\ell\geq 7$.
The approach proposed here is different, more involved, and relies on four key ingredients:
the irreducibility of $\rho_{h}(\lambda_1,\dotsc,\lambda_n)$ for all $h$ and all $\lambda_1,\dotsc,\lambda_n$;
a partial homological stability result $H^1(\Modu{S_{h+1}},\ad \rho_{h+1})\hookrightarrow H^1(\Modu{S_h},\ad \rho_h)$ for $h\geq 4$;
Ocneanu rigidity \cite[Th. 2.28]{etingofFusionCategories2005}; and functoriality properties
of harmonic degree $1$ representatives in non-Abelian Hodge theory.
See the proof outline in \Cref{suboutline} for more details.

The restriction to $g\geq 7$ comes from the restriction $h\geq 4$ in the partial stability result
and from technical issues, see \Cref{remark7}. The restriction to the $\SU$ and $\SO$ modular categories and to prime $\ell$
is necessary because the proof uses the irreducibility of the associated quantum representations.
See \Cref{remarkassumptions} for the precise properties of these modular categories that are used.

To conclude this subsection, we discuss the geometric definition of modular functors
\emph{on twisted moduli spaces}, which is crucial in this article.
Fix a modular functor, that is, a collection of representations $(\rho_g(\lambda_1,\dotsc,\lambda_n))_{g,n,\lambda_1,\dotsc,\lambda_n}$
of mapping class groups equipped with compatibilities.
Since the fundamental groups of the Deligne-Mumford-Knudsen moduli spaces of curves
are mapping class groups, applying the Riemann-Hilbert correspondence to the representations $\rho_g(\lambda_1,\dotsc,\lambda_n)$ yields
local systems $\Nu_g(\lambda_1,\dotsc,\lambda_n)$
on stacks $\tMgb{g}{n}$, which are smooth but not proper \cite[6.4.1]{bakalovLecturesTensorCategories2000}.
These local systems are called \emph{conformal blocks}.

For $r$ sufficiently large, each representation $\rho_g(\lambda_1,\dotsc,\lambda_n)$ of $\Modu{S_g^n}$ factors through the quotient
$\Modl{r}{S_g^n}=\Modu{S_g^n}/\langle T_\gamma^r\mid \gamma\rangle$ by the $r$-th powers of Dehn twists (finite-twists representations).
This allows us to extend the local systems $\Nu_g(\lambda_1,\dotsc,\lambda_n)$ to stacks $\Mgrb{g}{n}{r}$ that compactify the $\tMgb{g}{n}$,
called twisted moduli spaces, whose fundamental groups are precisely the $\Modl{r}{S_g^n}$.
In this geometric framework, the compatibilities of the $\rho_g(\lambda_1,\dotsc,\lambda_n)$ with respect to natural maps between mapping class groups
translate into compatibilities of the $\Nu_g(\lambda_1,\dotsc,\lambda_n)$ with respect to natural maps between the moduli spaces $\Mgrb{g}{n}{r}$.
This gives a geometric definition of the notion of modular functor.


\subsection{Outline of the proof}\label{suboutline}


We explain and motivate the steps in the proof of \Cref{mainresult}.
Throughout this section, $\Nu$ denotes an $\SU$ modular functor at level $2\ell$ or an $\SO$ modular functor at level $\ell$, with $\ell\geq 3$ prime.
We adopt a geometric viewpoint on $\Nu$: it is a collection of local systems $\Nu_g(\lambda_1,\dotsc,\lambda_n)$
on the twisted moduli spaces $\Mgrb{g}{n}{r}$.\footnote{The projective anomaly is ignored here; see \Cref{sectionMF} for more details.}
The problem can then be reduced to showing that $H^1(\Mgrb{g}{}{r},\ad\Nu_g)=0$ for $g\geq 7$ (\Cref{propositionfiniteorderdeformation}).

We begin with the partial homological stability result in cohomological degree $1$.
\begin{proposition*}[\ref{propositioninjectivity}]
 For $h\geq 4$, the map
 $$H^1(\Mgrb{h+1}{}{r},\ad \Nu_{h+1})\lra H^1(\Mgrb{h}{}{r},\ad \Nu_h)$$
 is injective.
\end{proposition*}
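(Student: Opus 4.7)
The plan is to combine the factorisation structure of the modular functor along the irreducible boundary divisor of $\Mgrb{h+1}{}{r}$ with non-abelian Hodge theory on this smooth proper twisted moduli stack, representing cohomology classes by harmonic forms and propagating a boundary vanishing to a global one.

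First I would make the stability map explicit. A finite étale cover of the irreducible boundary divisor $\delta_{\mathrm{irr}}\subset \Mgrb{h+1}{}{r}$ is identified with $\bigsqcup_\lambda \Mgrpb{h}{2}{r}{\lambda,\lambda^*}$, where gluing the two marked points produces the non-separating node. Factorisation gives $\Nu_{h+1}|_{\delta_{\mathrm{irr}}}\cong \bigoplus_\lambda \Nu_h(\lambda,\lambda^*)$ on this cover, so $\ad\Nu_{h+1}|_{\delta_{\mathrm{irr}}}$ decomposes accordingly. The stability map then arises by composing restriction to $\delta_{\mathrm{irr}}$, projection onto the $(\lambda,\mu)=(0,0)$ summand, and the natural identification $\Nu_h(0,0)\cong\Nu_h$ obtained from the forgetful morphism $\Mgrpb{h}{2}{r}{0,0}\to\Mgrb{h}{}{r}$.

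Next I would realise classes harmonically. Unitarity and irreducibility of every $\rho_h(\lambda_1,\dotsc,\lambda_n)$ equip $\ad\Nu_{h+1}$ with a semisimple flat structure on $\tMgb{h+1}{}$, and tame non-abelian Hodge theory (Mochizuki--Simpson) produces a canonical harmonic representative $\omega$ for each class $\alpha\in H^1(\Mgrb{h+1}{}{r},\ad\Nu_{h+1})$. By Kähler functoriality, pullback of $\omega$ along the closed embedding of $\delta_{\mathrm{irr}}$ is again harmonic, and similarly for further pullbacks via the relevant étale covers and forgetful maps. If $\alpha$ lies in the kernel of the stability map, the $(0,0)$-component of this pullback is simultaneously $d$-exact and harmonic, hence zero. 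Schur's lemma applied to the non-isomorphic irreducibles $\Nu_h(\lambda,\lambda^*)$ eliminates the off-diagonal components of the factorisation decomposition of $\ad\Nu_{h+1}|_{\delta_{\mathrm{irr}}}$; the diagonal components with $\lambda\neq 0$ are handled by applying the same injectivity statement with different boundary labels, which is where the lower bound $h\geq 4$ is expected to enter.

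The main obstacle is lifting this boundary vanishing of $\omega$ to a global vanishing on $\Mgrb{h+1}{}{r}$, hence forcing $\alpha=0$. The approach is to combine real-analyticity of harmonic sections with the residue and nearby-cycles structure along $\delta_{\mathrm{irr}}$ in tame non-abelian Hodge theory, using that $\delta_{\mathrm{irr}}$ carries enough information about global cohomology in the degree considered; Ocneanu rigidity, mentioned in the introduction, is likely to be invoked as a final ingredient to rule out residual categorical deformations. The delicate point is making the passage from vanishing along a divisor (real codimension $2$) to vanishing on the whole stack rigorous, and I expect this to be the heart of the argument.
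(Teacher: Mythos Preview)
Your proposal assembles the right ingredients (factorisation along a boundary stratum, harmonic representatives from non-abelian Hodge theory) but breaks down precisely at the step you flag as delicate, and the paper proceeds differently there.

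The central gap is the passage from ``the harmonic representative $\omega$ vanishes along the boundary divisor'' to ``$\omega$ vanishes globally.'' Real-analyticity does not give this: a real-analytic $1$-form can vanish identically on a real-codimension-$2$ submanifold without vanishing globally, and residues or nearby cycles do not repair it. The paper never tries to make the \emph{form} vanish everywhere. Instead it proves a general lemma (\Cref{lemmainjectivity}): if $Y\subset X$ is a closed connected subvariety with $\pi_1(Y)\to\pi_1(X)$ surjective, and $\hat Y\to Y$ a resolution, then $H^1(X,\uE)\to H^1(\hat Y,f^*\uE)$ is injective for every semisimple $\uE$. The proof is that the harmonic representative $s_\uE(w)$ pulls back to zero on $\hat Y$, hence integrates to zero along every loop in the smooth locus of $Y$; such loops generate $\pi_1(Y)$, and surjectivity of $\pi_1(Y)\to\pi_1(X)$ then forces the \emph{class} $w$ to vanish in group cohomology. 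The $\pi_1$-surjectivity is checked by hand using Lickorish generators: the image of $\Mgrb{g-k}{1}{r}\times\{*\}\to\Mgrb{g}{}{r}$ is covered by two overlapping subsurfaces whose mapping class groups together generate $\Modu{S_g}$. This topological input is what is missing from your sketch.

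There is a second structural difference. Your plan for the diagonal summands with $\lambda\neq 0$ (``apply the same injectivity statement with different boundary labels'') is circular. The paper avoids this entirely: it applies the lemma above twice, once with $k=2$ to embed $\Mgrb{h-1}{1}{r}$ into $\Mgrb{h+1}{}{r}$ (this needs $h+1\ge 5$, whence $h\ge 4$), and once with $k=1$ to embed $\Mgrb{h-1}{1}{r}$ into $\Mgrb{h}{}{r}$. Both $H^1(\Mgrb{h+1}{}{r},\ad\Nu_{h+1})$ and $H^1(\Mgrb{h}{}{r},\ad\Nu_h)$ then inject into the \emph{same} target $\bigoplus_\mu H^1(\Mgrb{h-1}{1}{r},\ad\Nu_{h-1}(\mu))$, compatibly with the stability map; property \textbf{(I)} of $\Nu$ (that $\Nu_1(\mu)\neq 0\Leftrightarrow\Nu_2(\mu)\neq 0$) together with irreducibility is what matches the two direct sums. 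Injectivity of the stability map follows immediately from the commutative diagram.

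Finally, Ocneanu rigidity plays no role in this proposition; it enters the overall argument at a logically independent step (\Cref{subocneanu}). And since the $\Mgrb{g}{n}{r}$ are already smooth proper, ordinary (not tame) non-abelian Hodge theory suffices.
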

This map is induced by $\Mgrb{h}{}{r}\leftarrow\Mgrb{h}{1}{r}\xrightarrow{j}\Mgrb{h+1}{}{r}$
and by the fact that $\ad\Nu_h$ is a direct summand of $j^*\ad \Nu_{h+1}$; see \Cref{propositioninjectivity}.
The key ingredient in proving this proposition is the following general fact,
that we prove using non-Abelian Hodge theory.
\begin{proposition*}[\ref{corollaryinjectivity}]:
 For $\delta\geq 1$, $g\geq 2\delta+1$, and any semisimple local system $\uE$ on $\Mgrb{g}{}{r}$,
 the map $H^1(\Mgrb{g}{}{r},\uE)\ra H^1(\Mgrb{g-\delta}{1}{r},j^*\uE)$ is injective,
 where $j$ denotes the map $\Mgrb{g-\delta}{1}{r}\ra\Mgrb{g-\delta}{1}{r}\times\Mgrb{\delta}{1}{r}\ra\Mgrb{g}{}{r}$
 induced by a choice of $*\in \Mgrb{\delta}{1}{r}$.
\end{proposition*}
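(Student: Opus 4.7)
The plan is to apply non-Abelian Hodge theory on the smooth proper Deligne--Mumford stack $X := \Mgrb{g}{}{r}$. By the Corlette--Simpson correspondence, the semisimple local system $\uE$ admits a pluriharmonic metric, and every class in $H^1(X,\uE)$ has a unique harmonic $1$-form representative. Factor $j = p \circ s_*$ as the composition of the section $s_*\colon M \hookrightarrow M\times N$, $x\mapsto (x,*)$ (where $M := \Mgrb{g-\delta}{1}{r}$ and $N := \Mgrb{\delta}{1}{r}$), with the clutching morphism $p\colon M\times N \to X$. Both are holomorphic maps of smooth proper K\"ahler DM stacks; by Sampson's theorem on the functoriality of pluriharmonic metrics under holomorphic maps, the pullback $j^*\uE$ is again semisimple, and for the harmonic representative $\omega$ of a class $[\omega]\in H^1(X,\uE)$, the pullback $j^*\omega$ is itself harmonic on $M$ with values in $j^*\uE$.

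Suppose $j^*[\omega]=0$. Uniqueness of harmonic representatives on $M$ upgrades this to the pointwise identity $j^*\omega = 0$. The first key observation is that, since $N$ is path-connected, the cohomological pullback $s_{*'}^*$ is independent of $*'\in N$; applied to the class $p^*[\omega]$, this yields $s_{*'}^*(p^*\omega) = 0$ as a harmonic form for every $*'\in N$. Thus $p^*\omega$ has vanishing restriction to every horizontal slice $M\times\{*'\}$, which forces $p^*\omega$ to lie in $\pi_N^*\Omega^1_N \otimes p^*\uE$, i.e.\ to be ``vertical''. Decomposing the semisimple local system $p^*\uE$ as $\bigoplus_\alpha \uE_\alpha \boxtimes \uF_\alpha$ with $\uE_\alpha, \uF_\alpha$ irreducible, the K\"unneth formula for harmonic forms on products of compact K\"ahler manifolds shows that this vertical harmonic form is supported only on summands for which $\uE_\alpha$ admits a nonzero flat section on $M$, hence for which $\uE_\alpha$ is trivial.

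The hypothesis $g \geq 2\delta+1$ enters in the final step: it is equivalent to $g-\delta \geq \delta+1$, which permits iterating the clutching construction on the $M$-side as well. A parallel application of Steps 1--3 to a further clutching decomposition of $M$, combined with the verticality constraint just established, forces $p^*\omega$ to vanish identically and hence $\omega$ to vanish on the boundary divisor $p(M\times N)\subset X$. A Lefschetz-type injectivity result for $H^1$ with semisimple local system coefficients, or an inductive argument over the boundary strata of $\Mgrb{g}{}{r}$, then yields $\omega=0$ on $X$. I expect the main obstacle to be precisely this last step: the genus bound is the natural threshold at which both sides of the clutching admit their own nontrivial further decompositions, and converting the vertical vanishing on a boundary divisor into a full vanishing on $X$ requires careful use of the moduli geometry together with a suitable Lefschetz-type statement for semisimple coefficients.
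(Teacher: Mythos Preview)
Your use of harmonic representatives from non-Abelian Hodge theory is exactly the right tool, and it is also what the paper uses. Your verticality argument on $M\times N$ is essentially correct. The genuine gap is your final step, which you yourself flag as the obstacle: you never convert vanishing on the clutching locus into vanishing on $X$, and the vague ``iterate on the $M$-side'' does not do this. Once $p^*\omega$ is known to be vertical, further decomposing $M$ gives no new constraint, since the $M$-direction is already zero.

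The missing idea is topological and concerns the \emph{image} $Y=j(M)\subset X$, not $M\times N$. The locus $Y$ consists of stable curves having a separating node with one side isomorphic to the fixed curve $*\in\Mgrb{\delta}{1}{r}$. Because $S_g$ is closed (no marked points), there is no intrinsic ``left'' or ``right'': the same subvariety $Y$ is the image of two different maps $M\to X$, corresponding to attaching $*$ on either side of a genus $g-2\delta$ middle piece. At the level of $\pi_1$, these two maps have images $\Modu{S'}$ and $\Modu{S''}$ in $\Modu{S_g}$, where $S',S''\subset S_g$ are complements of genus-$\delta$ subsurfaces on opposite ends. The hypothesis $g\geq 2\delta+1$ is exactly what makes $S'\cup S''=S_g$ with nonempty overlap, so that by Lickorish generators $\Modu{S'}$ and $\Modu{S''}$ together generate $\Modu{S_g}$. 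Hence $\pi_1(Y)\to\pi_1(X)$ is surjective, and inflation--restriction gives that $H^1(X,\uE)\to H^1(Y,i^*\uE)$ is injective.

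The harmonic-form argument is then used, not as you attempt it, but to compare the singular target $Y$ with the smooth source $M$: one shows (this is the paper's \Cref{lemmainjectivity}) that $H^1(X,\uE)\to H^1(Y,i^*\uE)$ and $H^1(X,\uE)\to H^1(M,j^*\uE)$ have the same kernel. Concretely, if $j^*[\omega]=0$ then the harmonic representative $s_{\uE}(\omega)$ vanishes on a dense open of $Y$, hence its integral along any loop in $Y$ vanishes, so the image of $\omega$ in $Z^1(\pi_1(Y),\rho)$ is zero. This is the Lefschetz-type statement you were looking for, and it needs the $\pi_1$-surjectivity above, not a further K\"unneth decomposition.
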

The map $j$ is independent of the choice of $*$ up to homotopy.
The idea is to apply this fact simultaneously to $g=h+1$, $\delta=2$, $\uE=\ad\Nu_{h+1}$ and to $g=h$, $\delta=1$, $\uE=\ad\Nu_{h}$.
One can then verify that both $H^1(\Mgrb{h+1}{}{r},\ad \Nu_{h+1})$ and $H^1(\Mgrb{h}{}{r},\ad \Nu_h)$ inject
into $\bigoplus_{\lambda\in\Lambda} H^1(\Mgrb{h-1}{1}{r},\ad\Nu_{h-1}(\lambda))$, compatibly with the map
$H^1(\Mgrb{h+1}{}{r},\ad \Nu_{h+1})\ra H^1(\Mgrb{h}{}{r},\ad \Nu_h)$.

This result shows that the finite-dimensional vector spaces
$H^1(\Mgrb{h}{}{r},\ad \Nu_h)$ are isomorphic for sufficiently large $h$.
We denote by $H^1_\mathrm{stab}$ this stable cohomology group.
We first outline a proof that $H^1_\mathrm{stab}=0$, and then explain how to adapt this proof
to obtain that $H^1(\Mgrb{h}{}{r},\ad \Nu_h)=0$ for $h\geq 7$.

The main idea of the proof is to relate $H^1_\mathrm{stab}$ to the deformations of the modular category $\Cat$
associated to $\Nu$, and then to apply Ocneanu rigidity
\cite[Th. 2.28]{etingofFusionCategories2005}, which states that a modular category admits no nontrivial deformations.
By the equivalence between modular categories and modular functors, Ocneanu’s rigidity implies
that a modular functor likewise admits no deformations.
In concrete terms, this means that any collection
$(\tilde{\Nu}_g(\lambda_1,\dotsc,\lambda_n))_{g,n,\lambda_1,\dotsc,\lambda_n}$
of $\C[\eps]$-local systems satisfying the axioms of a modular functor
is isomorphic to the trivial deformation of a modular functor.
Note that this result does not imply rigidity of the individual $\Nu_g(\lambda_1,\dotsc,\lambda_n)$,
since the condition that the collection forms a modular functor imposes strong compatibility
relations among the various deformations.

Thus, to show that $H^1_\mathrm{stab}=0$, it suffices to construct an injective map
$$H^1_\mathrm{stab}\lra\{\text{deformations }\tilde{\Nu}\text{ of }\Nu\}/\text{iso}.$$
Fix $\psi\in H^1_\mathrm{stab}$, and let us construct its image.
We proceed in two steps. The first step is to construct, for each $\Nu_h(\ul)$,
a deformation class $\psi_h(\ul)\in H^1(\Mgrb{h}{n}{r},\ad\Nu_h(\ul))$, such that these classes
satisfy compatibility properties analogous to those of a modular functor.
The second step will be to lift these deformation classes to actual deformations,
while preserving the compatibility conditions.

Fix $h\geq 0$, $n\geq 1$, $\ul\in\Lambda^n$, and assume $\Nu_h(\ul)\neq 0$.
There exists a gluing map
$$i:\Mgrb{h}{n}{r}\times \Mgrb{h}{n}{r}\lra \Mgrb{2h+2n-1}{}{r}$$
which sends a pair of curves in $\Mgrb{h}{n}{r}$ to the nodal curve obtained
by gluing together the pairs of marked points with the same labels.
At the level of fundamental groups, this map corresponds to the map induced between
mapping class groups by gluing the surface $S_h^n$ to its mirror image along their boundaries.
We then obtain a map
$H^1_\mathrm{stab}\ra H^1(\Mgrb{h}{n}{r},\ad\Nu_h(\ul))$
via the following composition.
\[\begin{array}{lcl}
  H^1_\mathrm{stab} &\ra    & H^1(\Mgrb{2h+2n-1}{}{r},\ad \Nu_{2h+2n-1}) \\
                    &\ra    & H^1(\Mgrb{h}{n}{r}\times \Mgrb{h}{n}{r}, i^*\ad \Nu_{2h+2n-1}) \\
                    &\ra    & H^1(\Mgrb{h}{n}{r}\times \Mgrb{h}{n}{r}, \bigoplus_{\umu\in\Lambda^n}\ad \Nu_{h}(\umu)\boxtimes \ad \Nu_{h}(\umu)) \\
                    &\ra    & \bigoplus_{\umu} H^1(\Mgrb{h}{n}{r}, \ad \Nu_{h}(\umu))\otimes H^0(\Mgrb{h}{n}{r}, \ad \Nu_{h}(\umu)) \\
                    &\ra    & H^1(\Mgrb{h}{n}{r}, \ad \Nu_{h}(\ul)).
\end{array}\]
Here, the gluing axiom of a modular functor is used to decompose $i^*\ad \Nu_{2h+2n-1}$.
We apply the Künneth formula and, crucially, use the fact that 
$H^0(\Mgrb{h}{n}{r}, \ad \Nu_{h}(\ul))=\C$
(which follows from the irreducibility of $\rho_h(\ul)$).
The class $\psi_h(\ul)$ is then defined as the image of $\psi$ under 
$H^1_\mathrm{stab}\ra H^1(\Mgrb{h}{n}{r},\ad\Nu_h(\ul))$.
When $n=0$, $\psi_h$ is defined as the image of $\psi_h(0)$ under the canonical isomorphism 
$H^1(\Mgrb{h}{1}{r},\ad\Nu_h(0))\simeq H^1(\Mgrb{h}{}{r},\ad\Nu_h)$
(see \Cref{propositionvacuum}).
It is then possible—though somewhat technical—to verify that the classes defined in this way satisfy
the expected compatibility conditions (see \Cref{definitioncompatiblecollection} for a precise statement).
This verification relies essentially on the irreducibility of the $\rho_h(\ul)$.

To lift the classes $\psi_h(\ul)\in H^1(\Mgrb{h}{n}{r},\ad\Nu_h(\ul))$ to genuine deformations
while preserving compatibility, we apply the following general result concerning harmonic representatives
of degree-$1$ classes in (non-Abelian) Hodge theory.
\begin{proposition*}[\ref{theoremuniversalsectionofH1}]
  For any proper smooth Kähler Deligne–Mumford stack $X$ and any semisimple local system $\uE$ on $X$,
  there exists a canonical section $H^1(X,\uE)\ra Z^1_{dR}(X,\uE)$.
  These representatives are compatible with direct sums, tensor products,
  and pullbacks along algebraic morphisms $Y\ra X$.
\end{proposition*}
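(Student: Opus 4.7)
The plan is to use non-Abelian Hodge theory in the form developed by Corlette and Simpson. By Corlette's theorem, extended to proper smooth K\"ahler DM stacks by descent from a finite \'etale cover that is a manifold, the semisimple local system $\uE$ admits a harmonic metric $h$, unique up to positive scalar multiplication on each irreducible summand. The metric $h$ yields a self-adjoint Laplacian $\Delta_h$ on $\uE$-valued forms and an orthogonal Hodge decomposition in each degree, so $Z^1_{dR}(X,\uE)$ splits as the direct sum of the harmonic $1$-forms and the image of $d$. Sending a class in $H^1(X,\uE)$ to its unique harmonic representative defines the desired section.

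The first step is to verify canonicity. The only ambiguity in $h$ is a positive rescaling on each irreducible summand, and such a rescaling multiplies the corresponding block of $\Delta_h$ by a positive scalar; this leaves $\ker \Delta_h$ unchanged, so the section is independent of the choice of $h$. Compatibility with direct sums is then immediate: if $\uE=\uE_1\oplus\uE_2$, then $h=h_1\oplus h_2$ is harmonic and $\Delta_h$ is block-diagonal, so harmonic representatives split along summands. For tensor products, $h_1\otimes h_2$ is a harmonic metric on $\uE_1\otimes\uE_2$, and the exterior product of a harmonic $\uE_1$-valued $1$-form with a flat section of $\uE_2$ is harmonic in $\uE_1\otimes\uE_2$; this is the form of tensor compatibility actually used in the preceding paragraph of the paper.

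For pullback along an algebraic morphism $f\colon Y\to X$, two further ingredients are needed: (i) that $f^*\uE$ remains semisimple, a theorem of Simpson with extensions by Mochizuki, and (ii) that $f^*h$ is a harmonic metric on $f^*\uE$ whose harmonic representatives are pulled back from those of $h$. The verification of (ii) is the main technical obstacle, since pullback of a harmonic form is not harmonic for an arbitrary smooth map. One argues via the Hodge structure on $H^1(X,\uE)$ coming from Simpson's non-Abelian Hodge correspondence: the $(1,0)$ and $(0,1)$ components of a class have unique representatives satisfying a pointwise $\overline{\partial}$ (resp.\ $\partial$) condition in the Dolbeault picture of the Higgs bundle associated to $(\uE,h)$; these pointwise conditions are preserved by $f^*$, so pure-type representatives pull back to pure-type representatives on $Y$, and summing gives the harmonic representative.

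The main obstacles are thus the extension of Corlette--Simpson theory and of the theorem on algebraic pullback of semisimple local systems to proper smooth K\"ahler DM stacks; once these are available, the remaining compatibilities follow formally from the uniqueness of harmonic metrics and harmonic representatives.
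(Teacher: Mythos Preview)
Your proposal is correct and follows essentially the same strategy as the paper: both use the harmonic metric from Corlette--Simpson theory and the resulting K\"ahler identities to produce canonical representatives. The paper's presentation is slightly more streamlined, however, in that it characterizes the section directly as the subspace of $D'$- and $D''$-closed $1$-forms (where $D=D'+D''$ is Simpson's decomposition coming from the harmonic metric), rather than as the harmonic forms. Since $D'$ and $D''$ depend only on the complex structure of the base and the harmonic metric on the bundle---not on the K\"ahler metric of the base---this characterization is manifestly preserved under holomorphic pullback once one knows (as you cite) that $f^*h$ is again harmonic. This sidesteps the difficulty you correctly flag, that harmonic forms need not pull back to harmonic forms; your resolution via the pointwise $\bar\partial$/$\partial$ conditions on the $(1,0)$ and $(0,1)$ components is exactly the $D'$, $D''$-closed condition in disguise. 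For injectivity, the paper uses the $DD'$-Lemma (a $D'$, $D''$-closed and $D$-exact $1$-form lies in the image of $DD'$, hence vanishes in degree $1$), which is equivalent to your orthogonal Hodge decomposition argument. One minor caution: your claim that the DM stack case follows ``by descent from a finite \'etale cover that is a manifold'' is not available in general (e.g.\ for gerbes); the paper instead cites Simpson's direct treatment of the stack case.
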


Thus each $\psi_h(\ul)$ can be lifted to a differential $1$-form in
$Z^1_{dR}(\Mgrb{h}{n}{r},\ad\Nu_h(\ul))$, and hence to a deformation of $\Nu_h(\ul)$
(see \Cref{sublifting}).
The functoriality of the maps $H^1(X,\uE)\ra Z^1_{dR}(X,\uE)$
then implies that the deformations of the $\Nu_h(\ul)$ constructed in this way
fit together to form a deformation of the modular functor $\Nu$.
This completes the construction of the image of $\psi$ under the desired injective map.
We can now conclude as follows: Ocneanu’s rigidity implies that for every $h$ and $\ul$,
$\psi_h(\ul)=0$.
Since, by stability, $\psi=\psi_h$ for sufficiently large $h$, we deduce that $\psi=0$.

Note that homological stability is used here only to compare 
$\mathrm{colim}_h\:H^1(\Mgrb{h}{}{r},\ad\Nu_h)$ and 
$H^1(\Mgrb{h}{}{r},\ad\Nu_h)$ for sufficiently large $h$.
Without appealing to partial homological stability, the argument above already shows that 
$\mathrm{colim}_h\:H^1(\Mgrb{h}{}{r},\ad\Nu_h)=0$.

The issue with the preceding approach is that we have no control over the stability bound in $h$ 
for $H^1(\Mgrb{h}{}{r},\ad\Nu)$.
We now explain how to adapt the proof scheme to show that 
$H^1(\Mgrb{h}{}{r},\ad\Nu)=0$ for all $h\geq 7$.
Fix $g\geq 7$ and let $\psi\in H^1(\Mgrb{g}{}{r},\ad\Nu)$.
The general idea is to repeat a variant of the above construction of the classes $\psi_h(\ul)$
for suitable values of $h$, $n$, and $\ul$ for which the construction makes sense.
Let 
$I\subset\{(h,n,\ul)\mid h,n\ge 0,\; \ul\in\Lambda^n\}$ 
be the set indexing the $\psi_h(\ul)$ that we define, 
and consider the restriction of the modular functor $\Nu$ to $I$,
which forms what we call an \emph{$I$-truncated} modular functor.
To ensure that the $\psi_h(\ul)$ satisfy the desired compatibility conditions,
we are forced to impose certain restrictions on $I$.
In particular, our choice of $I$ requires that if $(h,n,\ul)\in I$, then $h\leq (g+1)/2$,
and it ensures that for $h\leq (g+1)/2$, we have $(h,0,())\in I$.
By lifting the $\psi_h(\ul)$ to their harmonic representatives,
we obtain a deformation of the $I$-truncation of $\Nu$.
A detailed examination of the equivalence between modular categories and modular functors
shows that Ocneanu’s rigidity also applies to $I$-truncated modular functors.
Hence the resulting deformation is trivial, and $\psi_h(\ul)=0$ for all $(h,n,\ul)\in I$.

However, as noted above, $(g,0,())\notin I$, and we cannot directly conclude that $\psi=0$.
\emph{This is precisely where the partial homological stability result becomes crucial:}
it shows that the map $\psi\mapsto \psi_4$ is injective.
The argument then concludes as follows: since $(4,0,())\in I$, we have $\psi_4=0$, 
and by injectivity, it follows that $\psi=0$.

\begin{remark}\label{remark7}
  The lower bound $7$ arises essentially from the technical step in constructing 
  the classes $\psi_h(\ul)$ from $\psi$.
  These classes are defined by pullback along embeddings between surfaces,
  and the condition $h\leq (g+1)/2$ appears in order to guarantee the existence 
  of a canonical choice of embedding.
  Together with the condition $h\geq 4$ coming from partial homological stability,
  this forces us to restrict to $g\geq 7$.
\end{remark}


\subsection{Motivation: rigidity conjectures on mapping class groups}\label{subsectionintroconjectures}


There are some notable analogies
 between these discrete groups
and two other important families of groups:
lattices in Lie groups (for instance, $\mathrm{SL}_n(\Z)$), and the automorphism groups of free groups
$\mathrm{Out}(F_n)$.
For example, they all satisfy the Tits alternative, residual finiteness and forms of homological stability
(see \cite[sections 6.3, 1.3 and 2.2]{vogtmannAutomorphismsFreeGroups2002}, \cite{grossmanResidualFinitenessCertain1974} and \cite[Appendix]{drutuGeometricGroupTheory2018}).

A strong rigidity property preventing almost-invariant vectors,
known as property (T), is known to hold for lattices in Lie groups of rank at least $3$,
by Kazhdan’s original paper \cite{kazhdanConnectionDualSpace1967} on property (T),
and for $\mathrm{Aut}(F_n)$ when $n\ge 5$, by
recent computer-assisted proofs by Kaluba–Nowak–Ozawa \cite{kaluba$textAutmathbbF_5$2019} 
and Kaluba–Kielak–Nowak \cite{kalubaProperty$mathrmAutF_n$2021}.
Unlike in the other cases, property (T) remains open for mapping class groups in genus $g\geq 3$
(it fails for genus $\leq 2$, see \cite[Cor. 2.3]{aramayonaRigidityPhenomenaMapping2016}).
In that context, the question was raised by Ivanov
in \cite{ivanovFifteenProblemsMapping2006}, and
the same paper also discusses a consequence of the conjectured property (T),
known as the \emph{Ivanov conjecture}:
for any surface $S$ of genus $\geq 3$ and any finite-index subgroup $\Gamma\subset \Modu{S}$,
the abelianization $\Gamma^\mathrm{ab}$ is finite.
This conjecture is relatively well-studied,
both directly (see \cite{ershovFinitenessPropertiesJohnson2018})
and through another essentially equivalent conjecture formulated by
Putman and Wieland in \cite{putmanAbelianQuotientsSubgroups2013}
(see \cite{landesmanCanonicalRepresentationsSurface2024,
klukowskiTangleFreePermutations2024,markovicUnramifiedCorrespondencesVirtual2022}).

The property (T) and Ivanov conjectures can be expressed uniformly as follows, for $S=S_g^n$ compact of genus $g$ with $n$ boundary components.
\begin{equation}
 (\mathrm{X})_g^n:\;\forall \rho,\; H^1(\Modu{S_g^n},\rho)=0.
\end{equation}
When $\rho$ ranges over all unitary representations of $\Modu{S_g^n}$ on a Hilbert space, this condition
is property (T), which we denote by $(\mathrm{T})_g^n$. When $\rho$ instead ranges over \emph{finite image} representations of $\Modu{S_g^n}$,
we recover Ivanov's conjecture for $\Modu{S_g^n}$, denoted $(\mathrm{I})_g^n$.

We now introduce two further conjectures of the same flavor.
The first, called the ``finite-dimensional property (T)'' and denoted $(\mathrm{fT})_g^n$,
asserts $(\mathrm{X})_g^n$ for all \emph{finite-dimensional unitary}
representations of $\Modu{S_g^n}$.
The second, denoted $(\mathrm{fD})_g^n$, asserts $(\mathrm{X})_g^n$ for \emph{semisimple} finite-dimensional representations of $\Modu{S_g^n}$
\emph{that send each Dehn twist to an element of finite order}.
We refer to such $\rho$ as finite-twists representations.
This last conjecture is motivated by the fact that it implies $(\mathrm{fT})_g^n$ for $g\ge 3$,
and that its scope includes all adjoint representations of quantum representations arising
from any modular fusion category
(see \cite[Proposition 2.17]{godfardHodgeStructuresConformal2025} and \cite{godfardSemisimplicityConformalBlocks2025a}).

\begin{remark}
  In \cite[Conjecture 6.1.2]{littMotivesMappingClass2024}, Daniel Litt conjectures that for $g\geq 3$ and any \emph{irreducible}
  representation $\rho$ of $\Modu{S_{g,n}}$, $H^1(\Modu{S_{g,n}},\ad\rho)=0$. The restriction to irreducible
  representations is necessary as some non-trivial extensions exist (see \cite[Remark 6.1.4]{littMotivesMappingClass2024}).
  We expect that no such extensions arise when restricting to finite-twists representations, as stated in conjecture $(\mathrm{fD})_g^n$.
  There is some evidence supporting Litt’s conjecture.
  For instance, the typical non-unitary infinite image representation is the symplectic representation
  of $\Modu{S_g}$ on $H^1(S_g,\Z)$, which is known to be rigid for $g\geq 3$ by a result of Looijenga
  \cite{looijengaStableCohomologyMapping1995}. More recently, this result was extended by Zhong to higher Prym representations
  of non-closed surfaces of genus $g\geq 41$ \cite{zhongPrymRepresentationsTwisted2025}.
\end{remark}

The conjectures $(\mathrm{fD})_g^n$, $(\mathrm{fT})_g^n$ and $(\mathrm{I})_g^n$ are each \emph{equivalent to infinitesimal rigidity}
of the representations $\rho$ of the relevant class.
Indeed, infinitesimal rigidity is expressed by the vanishing $H^1(\Modu{S_g^n},\ad\rho)=0$,
where $\ad\rho=\rho^*\otimes\rho$ denotes the adjoint representation, and each of the classes of representations considered is stable under taking adjoints.
Conversely, one obtains $(\mathrm{X})_g^n$ from infinitesimal rigidity using the fact that $\rho$ appears as a direct summand of $\ad(\rho\oplus \mathrm{1})$, where $\mathrm{1}$ denotes the trivial representation.

In genus $g\geq 3$, every finite-dimensional unitary representation
is in fact a finite-twists representation (see the discussion preceding \Cref{conjecturefD}). Thus, for $g\geq 3$,
these conjectures are organized as follows.
\[\begin{tikzcd}
	& {(\mathrm{T})_g^n} \\
	{(\mathrm{fD})_g^n} & {(\mathrm{fT})_g^n} & {(\mathrm{I})_g^n.}
	\arrow[between={0.1}{0.9}, Rightarrow, from=1-2, to=2-2]
	\arrow[between={0.1}{0.9}, Rightarrow, from=2-1, to=2-2]
	\arrow[between={0.1}{0.9}, Rightarrow, from=2-2, to=2-3]
\end{tikzcd}\]

There are also relations linking these conjectures across different values of $g$ and $n$.
We provide proofs of such relations in \Cref{sectionpartialstability},
as a byproduct of one of the key tools used in proving the main result.
These generalize the implications $(\mathrm{I})_{g,n+2}\Rightarrow (\mathrm{I})_{g+1,n}$
and $(\mathrm{I})_{g,n+1}\Rightarrow (\mathrm{I})_{g,n}$ proved by Putman and Wieland
in \cite{putmanAbelianQuotientsSubgroups2013}
and were essentially known to experts, but not written down in this generality to our knowledge
(see \Cref{remarkPWpaper} for more details;
note that these implications are not the main object of Putman and Wieland's article).

\begin{theorem*}[{\ref{corollaryconjecturesreducetothreethree}}]
  Let $(\mathrm{X})$ denote one of $(\mathrm{I})$ (Ivanov conjecture \Cref{conjectureIvanov}),
  $(\mathrm{fT})$ (finite dimensional property (T) conjecture \ref{conjecturefT})
  or $(\mathrm{fD})$ (conjecture \ref{conjecturefD} on finite Dehn twist representations). Then
  \begin{itemize}
    \item $\forall g\geq 3,\forall n\geq 0$, $(\mathrm{X})_{g,n+1}\Rightarrow (\mathrm{X})_{g+1,n}$;
    \item $\forall g\geq 3,\forall n\geq 0$, $(\mathrm{X})_{g,n+1}\Rightarrow (\mathrm{X})_{g,n}$;
    \item $\forall g\geq 3,\forall n\geq 3$, $(\mathrm{X})_{g,n}\Rightarrow (\mathrm{X})_{g,n+1}$.
  \end{itemize}
  In particular, if $(\mathrm{X})_{3,3}$ holds, then $(\mathrm{X})_{g,n}$ holds for all $g\geq 3$, $n\geq 0$.
\end{theorem*}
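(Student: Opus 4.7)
Each of the three implications is to be established by producing, for every $\rho\in(\mathrm{X})_{g_1,n_1}$ on the ``larger'' mapping class group, an injection $H^1(\Modu{S_{g_1}^{n_1}},\ad\rho)\hookrightarrow H^1(\Modu{S_{g_0}^{n_0}},\ad\tilde\rho)$ into the cohomology of a ``smaller'' one, where $\tilde\rho$ is a pullback of $\rho$ still belonging to class $(\mathrm{X})$; the hypothesis $(\mathrm{X})_{g_0,n_0}$ then forces both sides to vanish. Given the three implications, the ``in particular'' statement follows by iteration: starting from $(\mathrm{X})_{3,3}$, apply (3) to propagate to $(\mathrm{X})_{3,n}$ for all $n\geq 3$; then apply (1) repeatedly to reach $(\mathrm{X})_{g,n}$ for all $g\geq 3$, $n\geq 2$; and finally apply (2) to fill in $n\in\{0,1\}$.

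\textbf{Implication (2).} For $\rho\in(\mathrm{X})_{g,n}$, inflate along the composition
\[\Modu{S_g^{n+1}}\twoheadrightarrow\Modp{1}{S_g^n}\twoheadrightarrow\Modu{S_g^n}\]
to obtain $\tilde\rho$; here the first arrow is capping $\partial_{n+1}$ with a marked disk (kernel $\Z=\langle T_{\partial_{n+1}}\rangle$) and the second is Birman's (kernel $\pi_1(S_g^n,*)$). Both kernels act trivially on $\ad\tilde\rho$, so two successive Lyndon-Hochschild-Serre inflation maps are injective and their composition gives the desired $H^1(\Modu{S_g^n},\ad\rho)\hookrightarrow H^1(\Modu{S_g^{n+1}},\ad\tilde\rho)$. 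Pullback preserves all three classes $(\mathrm{X})$: for $(\mathrm{fD})$ in particular, each Dehn twist of $\Modu{S_g^{n+1}}$ has image under $\tilde\rho$ equal to the $\rho$-image of either the identity (when the curve becomes nullhomotopic after capping) or a Dehn twist of $\Modu{S_g^n}$, hence of finite order.

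\textbf{Implications (1) and (3).} Both rest on the injectivity principle behind \Cref{corollaryinjectivity}: for a gluing map $j$ between (twisted) moduli stacks of marked stable curves that attaches a fixed small nodal piece at a marked point, the pullback along $j$ is injective on $H^1$ with coefficients in any semisimple local system, within a suitable partial stability range. For (1), one takes $j:\Mgrb{g}{n+1}{r}\to\Mgrb{g+1}{n}{r}$ attaching a one-marked-point genus-$1$ curve at the $(n+1)$-th marked point (after a base point choice in $\Mgrb{1}{1}{r}$), which on fundamental groups induces $\Modu{S_g^{n+1}}\to\Modu{S_{g+1}^n}$. For (3), one takes $j:\Mgrb{g}{n}{r}\to\Mgrb{g}{n+1}{r}$ attaching the unique three-pointed rational curve at the $(n+1)$-th marked point (the ``small'' piece is a point of $\Mgrb{0}{3}{r}$), inducing $\Modu{S_g^n}\to\Modu{S_g^{n+1}}$. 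Since $\ad\rho$ is semisimple for every $\rho$ in any class $(\mathrm{X})$, and since for $g\geq 3$ all three conjectures concern finite-twists representations (so that cohomology can be computed on the twisted moduli stack for $r$ divisible enough), the gluing injections translate directly into the desired implications.

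\textbf{Main obstacle.} The crux is the extension of \Cref{corollaryinjectivity} (stated for closed-curve moduli $\Mgrb{g}{}{r}$) to the marked-point moduli $\Mgrb{g}{n}{r}$, with sharp numerical bounds matching $g\geq 3$ for (1) and $n\geq 3$ for (3); this entails repeating the non-Abelian Hodge-theoretic argument in the presence of marked points and analyzing the codimension-$1$ gluing strata. Implication (3) is the most delicate because no genus is gained by the gluing, so the ``$\delta\geq 1$'' hypothesis of \Cref{corollaryinjectivity} must be replaced by a marked-point analogue taking advantage of $n\geq 3$. The remaining verifications --- the uniform choice of $r$, the compatibility of the moduli-theoretic and mapping class group pictures, and the stability of each class $(\mathrm{X})$ under the relevant pullbacks --- are routine consequences of the framework already developed in the excerpt.
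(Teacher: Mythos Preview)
Your overall strategy matches the paper's: implication (2) via the Birman exact sequence and inflation--restriction, and implications (1) and (3) via the injectivity principle behind \Cref{corollaryinjectivity} together with stability of each class $(\mathrm{X})$ under pullback along embeddings of subsurfaces. Two points need correction.

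First, your ``main obstacle'' is not one: the full statement of \Cref{corollaryinjectivity} in the paper already treats marked-point moduli $\Mgrb{g}{n}{r}$ (the version quoted in the introduction is only the special case $n=0$). In particular, case~(1) of \Cref{corollaryinjectivity} is exactly your map for implication~(1), with the bound $g+1\geq 3$ already sufficient.

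Second, and this is a genuine gap, your map for implication~(3) is insufficient. A \emph{single} gluing map $j:\Mgrb{g}{n}{r}\to\Mgrb{g}{n+1}{r}$ (attaching a three-pointed $\mathbb{P}^1$) has image a single boundary divisor, and the corresponding subsurface embedding $S_g^n\hookrightarrow S_g^{n+1}$ does \emph{not} induce a surjection on $\pi_1$, so \Cref{lemmainjectivity} does not apply. The paper's fix (case~(2) of \Cref{corollaryinjectivity}) is to use the disjoint union of \emph{two} such maps, landing on two distinct, intersecting boundary divisors; the two corresponding copies of $S_g^{n}$ inside $S_g^{n+1}$ together generate $\Modu{S_g^{n+1}}$ via Lickorish-type generators, and the hypothesis $n\geq 3$ is precisely what guarantees there are enough marked points for two such overlapping embeddings. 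This yields an injection
\[
H^1(\Mgrb{g}{n+1}{r},\uE)\hookrightarrow H^1(\Mgrb{g}{n}{r},j_1^*\uE)\oplus H^1(\Mgrb{g}{n}{r},j_2^*\uE),
\]
and both summands vanish under $(\mathrm{X})_{g,n}$.

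A minor cosmetic point: you phrase everything in terms of $\ad\rho$, while the conjectures concern $\rho$ itself. These are equivalent because each class is closed under adjoints and under direct sums with the trivial representation, but it is slightly cleaner (and matches the paper) to argue directly with $\rho$.
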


To prove Ivanov's conjecture and the finite-dimensional property (T) conjecture,
it would thus suffice to have a proof of property (T) for $\Modu{S_3^3}$, for example by computer.


\subsection{Organization of the paper}


In \Cref{sectionMF}, we introduce the equivalent notions of topological and geometric modular functors.
For the geometric definition, we introduce twisted compactifications of moduli spaces of curves.
The last subsection collects properties of $\SU$ and $\SO$ modular functors.
Note that we use a trimmed down definition of modular functor adapted to these cases for smoother exposition.

In \Cref{sectionmainresult}, we state the main result. \Cref{sectionproof} is devoted to its proof.
In \Cref{subbuildingdeformationofmodularfunctor}, we explain the construction of the classes
$\psi_h(\ul)\in H^1(\Mgrb{h}{n}{r},\ad\Nu_h(\ul))$, $(h,n,\ul)\in I$ from a class $\psi\in H^1(\Mgrb{g}{}{r},\ad\Nu_g(\ul))$, $g\geq 7$,
and check that these classes satisfy compatibility conditions akin to those of a modular functor.
These classes are lifted to a deformation of the $I$-truncation of $\Nu$ in \Cref{sublifting}.
To that end, the subsection discusses harmonic representatives in degree $1$ in the context of non-Abelian Hodge theory.
In \Cref{subocneanu}, we apply Ocneanu rigidity to that deformation. \Cref{subpartialhomologicalstability}
contains the partial homological stability result and concludes the proof of \Cref{mainresult}.

The purpose of \Cref{sectionpartialstability} is two-fold: to prove the main Lemma behind the partial homological stability (\Cref{lemmainjectivity}),
and to comment on an application of that Lemma to rigidity conjectures on mapping class groups (\Cref{corollaryconjecturesreducetothreethree}).
The focus is thus not so much on quantum representations and the section can be read independently of the rest of the paper.
The very last subsection revisits rigidity of Fibonacci representations \cite{godfardHodgeStructuresConformal2025} (case $\ell=5$),
giving a swift proof in that case. 


\subsection{Acknowledgements}


The author is especially thankful to Julien Marché for introducing him to the question of rigidity of quantum representations,
and for multiple helpful conversations on this topic.
The author also thanks Ramanujan Santharoubane and Nicolas Tholozan for helpful discussions and for their support.
The author thanks Andrew Putman for useful comments on a draft of this paper.


\section{Modular functors}\label{sectionMF}


Instead of using a construction of the $\SO$ and $\SU$ modular functors, we choose to work axiomatically
in order to highlight the $3$ relevant properties of these modular functors at prime levels that are used in the proof of \Cref{mainresult}.
For better exposition, the definition of modular functor given here is stripped down, see \Cref{remarkinvolution,remarksymmetryofgluing} below,
and we omit some definitions related to the projective anomaly. The anomaly is only necessary to properly define the $\SO$ and $\SU$ modular functors.
When studying deformations of their quantum representations, one works with adjoints and the projective anomaly disappears,
see \Cref{propositionfiniteorderdeformation}.
The reader may thus safely ignore the anomaly.

The proof of \Cref{mainresult} relies heavily on the geometric definition of modular functors \emph{via twisted moduli spaces of curves},
as the properness of these Deligne-Mumford stacks allows the use of (pure) Hodge theory. This will be crucial at $2$ distinct
points in the proof, see \Cref{sublifting,sectionpartialstability}.

A classic reference for modular functors is \cite[chp. 5 and 6]{bakalovLecturesTensorCategories2000}.
The exposition in \Cref{subtopologicalMF,sublevels,subgeometricMF} follows \cite[§2]{godfardHodgeStructuresConformal2025},
while that of \Cref{subSOandSU} follows \cite[§2]{godfardRigidityFibonacciRepresentations2025}.


\subsection{Topological definition of modular functor}\label{subtopologicalMF}


\begin{definition}\label{definitioncolourset}
  A set of colors is a finite set $\Lambda$ with a preferred element $0\in \Lambda$.
\end{definition}

\begin{remark}\label{remarkinvolution}
  For general modular functors, $\Lambda$ is also equipped with an involution.
  Here, as we will work with $\SO$ and $\SU$ modular functors, the involution is trivial and hence suppressed from the definitions.
\end{remark}

\begin{definition}\label{definitionsurfaces}
    Let $g,n\geq 0$. We define $S_g^n$ to be the compact surface of genus $g$ with $n$ boundary components,
    and $S_{g,n}$ to be the surface of genus $g$ with $n$ punctures and empty boundary.
\end{definition}

We now define the source category of (non-anomalous) modular functors.

\begin{definition}\label{definitioncolouredcategory}
    Let $\Lambda$ be a set of colors. The groupoid $\Surf$ of surfaces colored with $\Lambda$ is such that
    \begin{description}
        \item[(1)] its objects are compact oriented surfaces $S$ together with, for each component $B$ of $\partial S$,
        the choice of an orientation preserving identification $\varphi_B: B\simeq S^1$ and of a color $\lambda_B\in \Lambda$ ;
        \item[(2)] its morphisms from $\Sigma_1=(S_1,\varphi^1,\underline{\lambda}^1)$
        to $\Sigma_2=(S_2,\varphi^2,\underline{\lambda}^2)$
        are isotopy classes of homeomorphisms $f:S_1\ra S_2$ preserving orientation such that for each component $B_1\subset\partial S_1$
        and its image $f(B_1)=B_2\subset \partial S_2$, we have $\lambda_{B_1}=\lambda_{B_2}$ and $\varphi^2_{B_2}\circ f=\varphi^1_{B_1}$.
        \item[(3)] the composition is the composition of homeomorphisms.
    \end{description}
    This groupoid has a natural monoidal structure induced by the disjoint union $\sqcup$.
\end{definition}

To define anomalous modular functors, one needs to work with an extension $\tSurf$ of the groupoid $\Surf$.
Its objects $(S,L,\varphi,\underline{\lambda})$ are further equipped with a split Lagrangian $L$ in the first homology group
$H_1(\widehat{S};\Q)$ of the closure $\widehat{S}$ of $S$.
The map $\tSurf\ra\Surf$ sends $(S,L,\varphi,\underline{\lambda})$ to $(S,\varphi,\underline{\lambda})$.
We refer to \cite[§2.5]{godfardHodgeStructuresConformal2025} for the definition of on morphisms in $\tSurf$ and of the composition.

This extension is central in the following sense. For $(S,L,\varphi,\underline{\lambda})$ in $\tSurf$ with $S$ connected,
the automorphism group of $(S,\varphi,\underline{\lambda})$ in $\Surf$ is a mapping class group\footnote{Boundary components with same color may be permuted.},
while that of $(S,L,\varphi,\underline{\lambda})$ in $\tSurf$ is a central extension of that mapping class group by
$\Z$.

\begin{notation}
  We will often shorten $(S,L,\varphi,\underline{\lambda})$ or $(S,\varphi,\underline{\lambda})$ to $(S,\ul)$.
  Also, we will use the notation "$\ul,\mu$" for $\ul\in\Lambda^n$ and $\mu\in\Lambda$ as shorthand for "$\lambda_1,\dotsc,\lambda_n,\mu$".
\end{notation}

We now describe the additional gluing structure on $\Surf$.
Let $S$ be a compact surface and $\partial_+S\sqcup\partial_-S\subset\partial S$ be two components of its boundary.
Let $\varphi_{\partial_{\pm}S}:\partial_{\pm}S\simeq S^1$ be identifications of these components with $S^1$.

Define $S_{\pm}$ to be the surface obtained from $S$ by gluing $\partial_+S$ to $\partial_-S$
along $\varphi_{\partial_-S}^{-1}\circ(z\mapsto z^{-1})\circ\varphi_{\partial_+S}$.
Then $S_{\pm}$ is called the gluing of $S$ along $\partial_{\pm}S$. Notice that if we exchange the roles of $\partial_+S$ and $\partial_-S$
in the gluing process we get a surface $S_{\mp}$ that is canonically isomorphic to $S_{\pm}$.
See \Cref{figuregluing}.

This gluing structure can be extended to $\tSurf$, i.e. one can canonically obtain a Lagrangian $L_\pm$ on $S_{\pm}$ from a Lagrangian $L$ on $S$,
see \cite[§2.5]{godfardHodgeStructuresConformal2025}.

\begin{figure}
  \def\svgwidth{0.6\linewidth}
\begingroup%
  \makeatletter%
  \providecommand\color[2][]{%
    \errmessage{(Inkscape) Color is used for the text in Inkscape, but the package 'color.sty' is not loaded}%
    \renewcommand\color[2][]{}%
  }%
  \providecommand\transparent[1]{%
    \errmessage{(Inkscape) Transparency is used (non-zero) for the text in Inkscape, but the package 'transparent.sty' is not loaded}%
    \renewcommand\transparent[1]{}%
  }%
  \providecommand\rotatebox[2]{#2}%
  \newcommand*\fsize{\dimexpr\f@size pt\relax}%
  \newcommand*\lineheight[1]{\fontsize{\fsize}{#1\fsize}\selectfont}%
  \ifx\svgwidth\undefined%
    \setlength{\unitlength}{253.71787244bp}%
    \ifx\svgscale\undefined%
      \relax%
    \else%
      \setlength{\unitlength}{\unitlength * \real{\svgscale}}%
    \fi%
  \else%
    \setlength{\unitlength}{\svgwidth}%
  \fi%
  \global\let\svgwidth\undefined%
  \global\let\svgscale\undefined%
  \makeatother%
  \begin{picture}(1,0.78063141)%
    \lineheight{1}%
    \setlength\tabcolsep{0pt}%
    \put(0,0){\includegraphics[width=\unitlength,page=1]{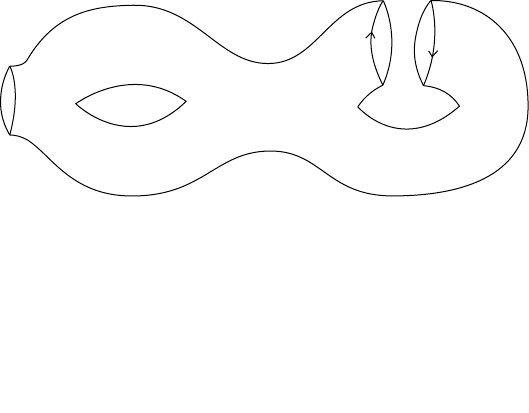}}%
    \put(0.83310804,0.68330276){\color[rgb]{0,0,0}\makebox(0,0)[lt]{\lineheight{1.25}\smash{\begin{tabular}[t]{l}$\partial_-S$\end{tabular}}}}%
    \put(0.59633921,0.67482277){\color[rgb]{0,0,0}\makebox(0,0)[lt]{\lineheight{1.25}\smash{\begin{tabular}[t]{l}$\partial_+S$\end{tabular}}}}%
    \put(0.23714832,0.66825613){\color[rgb]{0,0,0}\makebox(0,0)[lt]{\lineheight{1.25}\smash{\begin{tabular}[t]{l}$S$\end{tabular}}}}%
    \put(0.23714832,0.2549931){\color[rgb]{0,0,0}\makebox(0,0)[lt]{\lineheight{1.25}\smash{\begin{tabular}[t]{l}$S_\pm$\end{tabular}}}}%
    \put(0,0){\includegraphics[width=\unitlength,page=2]{gluing.pdf}}%
  \end{picture}%
\endgroup%

  \caption{Gluing $S_{\pm}$ of $S$ along $\partial_{\pm}S$. The $\bullet$ represents the image of $1$ in the boundary parametrization.}
  \label[figure]{figuregluing}
\end{figure}

\begin{definition}[Modular Functor]\label{definitiontopologicalmodularfunctor}
    Let $\Lambda$ be a set of colors.
    Then a modular functor is the data of a monoidal functor
    $$\Nu:\tSurf\lra \text{finite dimensional }\C-\text{vector spaces}$$
    where the monoidal structure on the target is understood to be the tensor product.
    This data is augmented by the following isomorphisms.
    \begin{description}
        \item[(G)] For any surface $S$, split Lagrangian $L$ in $H_1(\widehat{S};\Q)$, and pair of boundary components $\partial_{\pm}S$ lying on distinct connected components of $S$,
        let $S_{\pm}$ be the gluing of $S$ along $\partial_{\pm}S$. For any coloring $\underline{\lambda}$
        of the components of $\partial S_{\pm}$, an isomorphism as below is given.
        \begin{equation}\label{equationgluing}
            \Nu(S_{\pm},L_\pm,\underline{\lambda})\simeq \bigoplus_{\mu\in\Lambda}\Nu(S,L,\mu,\mu,\underline{\lambda}).
        \end{equation}
    \end{description}
    This rule, also sometimes called fusion or factorization rule, is the most important property of modular functors.
    The isomorphisms of \textbf{(G)} are assumed to be functorial and compatible with disjoint unions.
    Gluing isomorphisms along distinct pairs of boundary components must commute with each other. Moreover, we ask for the gluing to be symmetric in the sense
    that the gluing isomorphisms are unchanged when exchanging $\partial_-S$ and $\partial_+S$ and identifying $S_\pm$ with $S_\mp$.

    The functor is also assumed to verify $2$ more axioms, called normalization and non-degeneracy.
    \begin{description}
        \item[(N)] $\Nu(S_0^1,\lambda)$ is canonically isomorphic to $\C$ if $\lambda=0$ and is $0$ otherwise;
        \item[(nonD)] For each $\lambda$, $\Nu(S_0^2,\lambda,\lambda)\neq 0$.
    \end{description}

    A modular functor is said to be non-anomalous if $\Nu$ factorizes through $\Surf$.
\end{definition}

\begin{remark}
  Given a modular functor $\Nu$, for any colored connected surface $\Sigma=(S,\varphi,\underline{\lambda})$,
  one gets a projective representations $\rho_\Sigma$ of the mapping class group $\Modu{S,\partial S}$.
  For a non-anomalous modular functor, this representation is canonically linearized. 
  In particular, for any $g\geq 0$, by setting $\Sigma=(S_g,\varnothing,\varnothing)$,
  one gets, up to conjugacy, a projective representation $\rho_g$ of $\Modu{S_g}$.
  These are the representations of interest in this article.
\end{remark}

\begin{remark}\label{remarksymmetryofgluing}
  In the more general definition of modular functor, gluing and symmetry of gluing are more subtle, see, for example, \cite[2.5]{godfardHodgeStructuresConformal2025}.
\end{remark}

\begin{remark}
  By applying the axiom \textbf{(G)} to the gluing of one boundary component of $S_0^2$ to a boundary component of another copy of $S_0^2$, one gets the following.
  \begin{equation*}
    \Nu(S_0^2,\lambda,\mu) = \begin{cases}
      \C \text{ canonically } & \text{if }\lambda=\mu\text{,} \\
      0 & \text{otherwise.}
    \end{cases}
  \end{equation*}
\end{remark}

\begin{remark}\label{remarkvacuum}
  By the normalization axiom, for $S$ obtained from $S'$ by removing the interior of a disk we have a canonical isomorphism
  $\Nu(S,0)\simeq \Nu(S')$ called a vacuum isomorphism.
\end{remark}

\begin{remark}\label{remarklevel}
  Let $S_{\pm}$ be a colored surface constructed as a gluing of a surface $S$ along $\partial_{\pm}S$.
  Let $\gamma$ denote the simple closed curve that is the image of $\partial_{\pm}S$ in $S_{\pm}$.
  Then the Dehn twist $T_\gamma$ acts block-diagonally on the decomposition \textbf{(G)}.

  Moreover, one can easily see that its action on the block $\Nu(S,\mu,\mu,\underline{\lambda})$
  is by a scalar $t_\mu\in \C^\times$ that depends only on $\mu$ and not on the surface $S$.
\end{remark}

\begin{remark}\label{remarkcentralcharge}
  Let $\Nu$ be a modular functor.
  For any connected colored surface $\Sigma=(S,\varphi,\ul)$ and object $\tilde{\Sigma}$ of $\tSurf$ extending it,
  the generator of $\Z$ in the canonical central extension $1\ra \Z\ra \Aut{\tilde{\Sigma}}\ra\Aut{\Sigma}\ra 1$ acts by
  $c\in\C^\times$ on $\Nu(\tilde{\Sigma})$.
  This scalar $c$ can be shown using the gluing axiom to be independent of $\tilde{\Sigma}$ and $\Sigma$, and is called the central charge of $\Nu$.
  A modular functor is non-anomaly if and only if its central charge is $1$.
\end{remark}


\subsection{Levels for modular functors}\label{sublevels}


It will be crucial in this work that the source groupoid of the modular functors be fundamental groupoids
of proper DM stacks. While the connected components of $\tSurf$ are fundamental groupoids of DM stacks, these stacks are not proper.
Hence we will be using a general result about modular functors which says that a modular functor $\Nu$
always factors through some quotient of $\tSurf$ whose connected components are fundamental groupoids of proper DM stacks
(twisted moduli spaces of curves, see \Cref{subtwistedmodulispaces} below).

\begin{definition}
  For any $\Sigma=(S,\varphi,\ul)$ in $\Surf$ and $\gamma\subset S$, denote by $T_\gamma$ the Dehn left twist along $\gamma$
  seen as an element of $\Aut{\Sigma}$.
  For $r\geq 1$, denote by $\Surf(r)$ the quotient of $\Surf$ by $T_\gamma^r$, for all $\gamma$ as above.
  
  For any object $\tilde{\Sigma}=(S,L,\varphi,\ul)$ in $\tSurf$ extending $\Sigma$ in $\Surf$,
  and simple closed curve $\gamma\subset S$ with homology class $[\gamma]$ in $L$, the Dehn twist $T_\gamma$ in $\Aut{\Sigma}$
  has a preferred lift $(T_\gamma, \id_L)$ in $\Aut{\tilde{\Sigma}}$, that we will also denote $T_\gamma$ and call a pure Dehn twist.\footnote{See
  \cite[§2.5]{godfardHodgeStructuresConformal2025} for the definition of morphisms in $\tSurf$.}
  For $r,s\geq 1$, denote by $\tSurf(r,s)$ the quotient of the groupoid $\tSurf$ by $T_\gamma^r$
  for all $T_\gamma$ pure Dehn twist, and by $\kappa^s$ for all $\kappa$ automorphism in $\tSurf$ in the kernel of $\tSurf\ra\Surf$.
\end{definition}

The following is essentially the Andersen-Moore-Vafa theorem. We refer to \cite[§2.4]{godfardHodgeStructuresConformal2025} for a proof and references.

\begin{theorem}\label{theoremlevel}
  For any modular functor $\Nu$, there exists $r,s\geq 1$ such that $\Nu$ factors through $\tSurf(r,s)$.
\end{theorem}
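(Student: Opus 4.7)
The plan is to reduce the statement to the finiteness of two natural families of scalars associated with $\Nu$ --- the twist eigenvalues $(t_\mu)_{\mu\in\Lambda}$ from Remark \ref{remarklevel} and the central charge $c$ from Remark \ref{remarkcentralcharge} --- and then to invoke the Andersen--Moore--Vafa theorem to conclude that all these scalars are roots of unity.

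The central anomaly is immediate: by Remark \ref{remarkcentralcharge}, the generator $\kappa$ of the kernel $\Z$ of $\tSurf\to\Surf$ acts on every $\Nu(\tilde\Sigma)$ by the single scalar $c\in\C^\times$ intrinsic to $\Nu$, so any $s$ with $c^s=1$ kills $\kappa^s$ on all of $\Nu$. For a pure Dehn twist $T_\gamma\in\Aut{\tilde\Sigma}$ on $\tilde\Sigma=(S,L,\varphi,\ul)$, the hypothesis $[\gamma]\in L$ lets one realize $(S,L)$ as the gluing of a cut surface $(S',L')$ along two new boundary components lying in the isotopy class of $\gamma$ (the Lagrangian $L$ splits compatibly, as recalled from \cite[§2.5]{godfardHodgeStructuresConformal2025}). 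The gluing axiom \textbf{(G)} then yields a decomposition
\[\Nu(\tilde\Sigma) \simeq \bigoplus_{\mu\in\Lambda} \Nu(S',L',\mu,\mu,\ul),\]
and by Remark \ref{remarklevel}, $T_\gamma$ acts on the $\mu$-summand by the scalar $t_\mu$, which crucially depends only on $\mu$ and not on the ambient surface or coloring. Since $\Lambda$ is finite, taking $r=\mathrm{lcm}_{\mu\in\Lambda}\mathrm{ord}(t_\mu)$ then kills $T_\gamma^r$ uniformly on every value of $\Nu$, provided each $t_\mu$ has finite order.

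The main obstacle is precisely this last proviso: establishing the root-of-unity property for the $t_\mu$ and for $c$. This is the content of the Andersen--Moore--Vafa theorem, which I would invoke as a black box rather than reprove. Through the equivalence between modular functors and modular tensor categories, the $t_\mu$ become the diagonal entries of the $T$-matrix in the projective $\mathrm{SL}_2(\Z)$ representation generated by $S$ and $T$ on $\Nu$ of a once-punctured torus, and $c$ appears as the anomaly of this projective representation. The finiteness of the orders follows from arithmetic constraints on modular tensor categories, notably the Galois-theoretic structure of their $S$- and $T$-matrices. The paper signals this by pointing to \cite[§2.4]{godfardHodgeStructuresConformal2025} for a full proof and references, and I would do the same rather than reproduce the argument.
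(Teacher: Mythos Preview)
Your proposal is correct and takes essentially the same approach as the paper, which simply identifies the statement with the Andersen--Moore--Vafa theorem and refers to \cite[\S 2.4]{godfardHodgeStructuresConformal2025} for a proof and references. In fact you spell out more than the paper does: the reduction to the root-of-unity property of the $t_\mu$ and of $c$ via Remarks \ref{remarklevel} and \ref{remarkcentralcharge} is exactly the standard one, and your decision to treat the actual finiteness of these orders as a black-box citation matches what the paper itself does.
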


\begin{definition}\label{definitionlevel}
  A level for a modular functor $\Nu$ is a choice of a pair $(r,s)$ as in \Cref{theoremlevel}.
\end{definition}


\subsection{Twisted moduli spaces of curves}\label{subtwistedmodulispaces}


Two of the main arguments in this paper use Hodge theory and rely on the equivalent geometric definition of modular functor
via twisted moduli spaces of curves.
In the Bakalov-Kirillov geometric definition \cite[6.4.1]{bakalovLecturesTensorCategories2000}, modular functors are defined as collections of local systems
on some bundle over moduli stacks of smooth curves curve $\Mg{g}{n}$.
These stacks are not proper, hence will not work for us as we will use pure Hodge theory.
A natural idea is then to use the Deligne-Mumford compactifications $\oMg{g}{n}$ of these moduli spaces.
While the local systems considered do not extend to these compactifications in general, they can be extended to some alternative twisted versions
$\Mgrbt{g}{n}{r}{s}$ of the Deligne-Mumford compactifications.
This section and the next one are a condensed version of \cite[§2.4]{godfardHodgeStructuresConformal2025},
following \cite[§2.1, §2.2]{godfardSemisimplicityConformalBlocks2025a}.
See the former for more details.

\begin{notation}
  For $g,n\geq 0$ such that $2g-2+n>0$, we will denote by $\Mg{g}{n}$ the moduli stack of smooth connected curves of genus $g$ with $n$ distinct marked 
  points over $\C$, and by $\oMg{g}{n}$ the Deligne-Mumford compactification classifying stable nodal connected curves with $n$ distinct marked points
  on their smooth locus.
\end{notation}

We will work with variations of the Deligne-Mumford compactifications.
These depend on a integer $r\geq 1$ and classify $r$-twisted curves in the sense of Kontsevich, Abramovic and Vistoli
\cite{abramovichCompactifyingSpaceStable2002}. For a reference on their definition, see Chiodo's article \cite[1.3]{chiodoStableTwistedCurves2008}.

\begin{notation}
  For $r\geq 1$ and $g,n\geq 0$ such that $2g-2+n>0$, denote by $\Mgrp{g}{n}{r}$
  the moduli space of stable nodal $r$-twisted connected curves of genus $g$ with $n$ distinct smooth marked points \cite[Th. 4.4]{chiodoStableTwistedCurves2008}.
\end{notation}

Note that $\Mgrp{g}{n}{1}=\oMg{g}{n}$.

\begin{remark}\label{remarkrootstack}
  One can give an alternative description of these stacks using the root stack construction.
  Let $D_i\subset \oMg{g}{n}$ for $i=1,\dotsc,k$ be the components
  of the boundary divisor. Then $\Mgrp{g}{n}{r}$ is obtained from $\oMg{g}{n}$ by taking $r$-th root stack independently locally
  on each $D_i$ (see \cite[2.3, 4.5]{chiodoStableTwistedCurves2008})
  \begin{equation*}
    \Mgrp{g}{n}{r}=\oMg{g}{n}\left[\sum_i \frac{D_i}{r}\right].
  \end{equation*}
  With this description, applying the Van-Kampen theorem, one can see that the fundamental group of $\Mgrp{g}{n}{r}$ is equivalent
  to the quotient $\PMod{S_{g,n}}/\langle T_\delta^r\mid \delta\rangle$ of the pure mapping class group of the $n$ times \emph{punctured}
  genus $g$ surface by all $r$-th powers of Dehn twists.
\end{remark}

\begin{notation}
  For $r\geq 1$ and $g,n\geq 0$ such that $2g-2+n>0$, denote by $\Mgrb{g}{n}{r}$
  the moduli space of stable nodal $r$-twisted curves of genus $g$ with $n$ distinct smooth order $r$ stacky points
  \emph{and a section at each stacky point}.
  We will use the convention $\Mgrb{0}{2}{r}=\Br{r}$, where $\Br{r}$ is a specific family of curves, see \cite[Rk. 2.26]{godfardHodgeStructuresConformal2025}.
\end{notation}

\begin{remark}\label{remarkboundary}
  The stack $\Mgrb{g}{n}{r}$ is a $\mu_r^n$-gerbe over $\Mgrp{g}{n}{r}$, which can be described as follows.
  Let $\Sigma_i\subset \Cgrp{g}{n}{r}$ be the $i$-th stacky marked point in the universal curve $\Cgrp{g}{n}{r}\ra \Mgrp{g}{n}{r}$.
  Then $\Sigma_i$ is a $\mu_r$-gerbe over $\Mgrp{g}{n}{r}$ and $\Mgrb{g}{n}{r}$ is the product over $\Mgrp{g}{n}{r}$ of the $\Sigma_i$.
  With this description, we see that the fundamental group of $\Mgrb{g}{n}{r}$ is isomorphic
  to the quotient $\Modu{S_g^n}/\langle T_\delta^r\mid \delta\rangle$ of the mapping class group of the
  genus $g$ surface with $n$ \emph{boundary components} by all $r$-th powers of Dehn twists.
\end{remark}

To deal with projective anomalies of modular functors,
we need yet another variation: the stacks $\Mgrbt{g}{n}{r}{s}$. These spaces depend also on another integer $s\geq 1$, and
$\Mgrbt{g}{n}{r}{s}$ is a $\mu_s$-gerbe over $\Mgrb{g}{n}{r}$. At the level of fundamental groups, they correspond
to the stable central extension of mapping class groups by $\Z$, quotiented by $s\Z$
and all $r$-th powers of pure Dehn twists.
See \cite[§2.5]{godfardHodgeStructuresConformal2025} for definitions and details on this.

Gluing and forgetful maps between the Deligne-Mumford compactifications $\oMg{g}{n}$ have analogs for the $\Mgrb{g}{n}{r}$
and the $\Mgrbt{g}{n}{r}{s}$. See \Cref{definitiongeometricmodularfunctor} below. These can either be defined from the interpretation as
moduli spaces of twisted curves, or directly from those on the $\oMg{g}{n}$ using the root stack and gerbe construction
mentioned in \Cref{remarkrootstack,remarkboundary}.


\subsection{Geometric definition of modular functors}\label{subgeometricMF}


We reproduce here the definition of geometric modular functor in \cite[§2.4]{godfardHodgeStructuresConformal2025}.
This definition is equivalent to that given by Bakalov-Kirillov in \cite[6.4.1, 6.7.6]{bakalovLecturesTensorCategories2000}.

\begin{definition}[Modular Functor]\label{definitiongeometricmodularfunctor}
  Let $\Lambda$ be a set of colors.
  Let $r,s\geq 1$ be integers.
  Then a geometric modular functor with level $(r,s)$ is the data,
  for each $g,n\geq 0$, $(g,n)\neq (0,0),(0,1),(1,0)$, and $\ul\in\Lambda^n$,
  of a local system $\Nu_g(\ul)$ over $\Mgrbt{g}{n}{r}{s}$, together with some isomorphisms
  described below.
  \begin{description}
      \item[(G-sep)] For each gluing map
      \begin{equation*}
        q:\Mgrbt{g_1}{n_1+1}{r}{s}\times\Mgrbt{g_2}{n_2+1}{r}{s}\lra \Mgrbt{g_1+g_2}{n_1+n_2}{r}{s}
      \end{equation*}
      and each $\ul$, an isomorphism
      \begin{equation*}
        q^*\Nu_{g_1+g_2}(\lambda_1,\dotsc,\lambda_n)\simeq \bigoplus_\mu\Nu_{g_1}(\lambda_1,\dotsc,\lambda_{n_1},\mu)\otimes
        \Nu_{g_2}(\lambda_{n_1+1},\dotsc,\lambda_n,\mu);
      \end{equation*}
      \item[(G-nonsep)] For each gluing map
      \begin{equation*}
        p:\Mgrbt{g-1}{n+2}{r}{s}\lra \Mgrbt{g}{n}{r}{s}
      \end{equation*}
      and each $\ul$, an isomorphism
      \begin{equation*}
        p^*\Nu_g(\lambda_1,\dotsc,\lambda_n)\simeq \bigoplus_\mu\Nu_{g-1}(\lambda_1,\dotsc,\lambda_n,\mu,\mu);
      \end{equation*}
      \item[(N)] For each forgetful map $f:\Mgrbt{g}{n+1}{r}{s}\ra\Mgrbt{g}{n}{r}{s}$, and each $\ul$, an isomorphism
      \begin{equation*}
        f^*\Nu_g(\lambda_1,\dotsc,\lambda_n)\simeq \Nu_g(\lambda_1,\dotsc,\lambda_n,0)
      \end{equation*}
      and a canonical isomorphism $\Nu_0(0,0)\simeq \underline{\C}$ (trivial local system);
      \item[(Perm)] For each $\ul\in\Lambda^n$ and permutation $\sigma\in S_n$, an isomorphism
      \begin{equation*}
        \Nu_g(\lambda_1,\dotsc,\lambda_n)\simeq \sigma^*\Nu_g(\lambda_{\sigma(1)},\dotsc,\lambda_{\sigma(n)}).
      \end{equation*}
  \end{description}
  The isomorphisms of \textbf{(G-sep)}, \textbf{(G-nonsep)}, \textbf{(N)} and \textbf{(Perm)}
  are to be compatible with each other and repeated applications.
  
  The functor is also assumed to verify the non-degeneracy axiom
  \begin{description}
      \item[(nonD)] For each $\lambda$, $\Nu_0(\lambda,\lambda)\neq 0$.
  \end{description}
\end{definition}

\begin{remark}
  Despite our convention $\Mgrbt{0}{2}{r}{s}=B\mu_r\times B\mu_s$, because we interpret it as a family of twisted curves (see \cite[Rk. 2.26]{godfardHodgeStructuresConformal2025}),
  we can make sense of gluing maps involving it.
\end{remark}


\subsection{Comments on the relation between the \texorpdfstring{$\mathbf{2}$}{2} definitions of modular functor}

The equivalence of the $2$ definitions is essentially proved in \cite[6.4.2]{bakalovLecturesTensorCategories2000},
with the difference that Bakalov and Kirillov do not use twisted compactifications.
Here, we take the time to give the general idea of how this equivalence works, as we will need to juggle between the $2$ definitions.

We first explain the non-anomalous case.
Let $\Nu:\Surf\ra \C-\text{vector spaces}$ be a modular functor in the topological definition. Then the monoidality of $\Nu$ provides,
for any colored surfaces $\Sigma_1$, $\Sigma_2$, a functorial isomorphism $\Nu(\Sigma_1\sqcup\Sigma_2)\simeq \Nu(\Sigma_1)\otimes\Nu(\Sigma_2)$.
Hence a modular functor is essentially determined by its restriction to the full sub-groupoid $\Surf^\mathrm{conn}\subset \Surf$ of connected surfaces.

Notice that if one rewrites \Cref{definitiontopologicalmodularfunctor} replacing $\Surf$ by $\Surf^\mathrm{conn}$, then some modifications to
the gluing axiom is necessary. Indeed, the gluing axiom for $S_\pm$ connected presents $2$ cases: either $S$ is connected, or $S$ has $2$ connected components,
say $S_+$ containing $\partial_+S$, and $S_-$ containing $\partial_-S$. In the former case, the axiom still takes the form of \Cref{equationgluing},
but in the latter, we have to fill in for the lack of monoidality and the axiom takes the form of an isomorphism
\begin{equation}\label{equationgluingdisconnected}
            \Nu(S_{\pm},\underline{\lambda}_+,\underline{\lambda}_-)\simeq \bigoplus_{\mu\in\Lambda}\Nu(S_+,\mu,\underline{\lambda}_+)\otimes\Nu(S_-,\mu,\underline{\lambda}_-)
\end{equation}
where $\ul_+$ (respectively $\ul_-$) are the colorings of the boundary components of $S_{\pm}$ that are also in $S_+$ (respectively $S_-$).

Let $\mathrm{Surf}_{g}^{n}$ be the groupoid whose objects are compact genus $g$ surfaces with $n$ ordered parametrized boundary components,
and whose morphisms are isotopy classes of diffeomorphisms preserving the ordering and parametrizations of the boundary components.
Notice that $\Surf^\mathrm{conn}$ is equivalent to $\bigsqcup_{g,n}\left(\mathrm{Surf}_{g}^{n}\times\Lambda^n\right)/S_n$.

We can again simplify the definition of $\Nu$ by further restricting the underlying functor from $\Surf^\mathrm{conn}$
to $\bigsqcup_{g,n}\mathrm{Surf}_{g}^{n}\times\Lambda^n$.
The price to pay is then an additional permutation axiom which gives canonical isomorphisms
$$\sigma_*:\Nu(S,\lambda_1,\dotsc,\lambda_n)\simeq \Nu(\sigma(S),\lambda_{\sigma(1)},\dotsc,\lambda_{\sigma(n)})$$
for $S$ a connected surface with $n$ ordered boundary components (each parametrized), and $\sigma$ a permutation.
Above, $\sigma(S)$ denotes $S$ with its boundary components reordered according to $\sigma$.

Finally, we can restrict $\Nu$ to $\bigsqcup_{(g,n)\neq (0,0), (0,1), (1,0)}\mathrm{Surf}_{g}^{n}\times\Lambda^n$.
And again, the price to pay is some changes to the axioms: the normalization axiom \textbf{(N)} can no longer make sense as is
because $g=0$, $n=1$ is not allowed, and is replaced by the statement in \Cref{remarkvacuum}. With this formulation, 
it is usually called the vacuum axiom.

We now explain why this modified (equivalent) definition of topological modular functor is equivalent
to the definition of geometric modular functor.
The main point is that the fundamental groupoid of $\Mgrb{g}{n}{r}$ is equivalent to
the quotient $\mathrm{Surf}_{g}^{n}(r)$ of $\mathrm{Surf}_{g}^{n}$ by $r$-th powers of Dehn twists, see \Cref{remarkboundary}.
Hence, a functor from $\mathrm{Surf}_{g}^{n}(r)\times\Lambda^n$ to finite dimensional $\C$-vector spaces
is the same as a collection of $\C$-local systems on $\Mgrb{g}{n}{r}$ parametrized by $\Lambda^n$.
The axioms listed above for a modular functor with source $\bigsqcup_{(g,n)\neq (0,0), (0,1), (1,0)}\mathrm{Surf}_{g}^{n}\times\Lambda^n$
then translate to those of \Cref{definitiongeometricmodularfunctor}.

In the anomalous case, the above derivations work as is when adding Lagrangians, and noting that the fundamental groupoid of $\Mgrbt{g}{n}{r}{s}$
is equivalent to the quotient $\mathrm{S\tilde{u}rf}_{g}^{n}(r,s)$ of the extension $\mathrm{S\tilde{u}rf}_{g}^{n}$ of $\mathrm{Surf}_{g}^{n}$
by $r$-th powers of pure Dehn twists
and the $s$-th powers of automorphisms in the kernel of $\mathrm{S\tilde{u}rf}_{g}^{n}\ra\mathrm{Surf}_{g}^{n}$.


\subsection{\texorpdfstring{$\mathbf{SO(3)}$ and $\mathbf{SU(2)}$}{SO(3) and SU(2)} modular functors at prime levels}\label{subSOandSU}


For $\ell\geq 3$ be an integer, define $\Lambda_\ell:=\{0,1,\dotsc,\ell-2\}$, and when $\ell$ is odd, define $\Lambda_\ell^\mathrm{ev}:=\{0,2,\dotsc,\ell-3\}$.
For $\ell\geq 3$, the $\SU$ modular functor of level $2\ell$ is a modular functor with set of colors $\Lambda_\ell$, which we will denote $\Nu_\ell$.
For $\ell\geq 3$ odd, the $\SO$ modular functor of level $\ell$ is a modular functor with set of colors $\Lambda_\ell^\mathrm{ev}$,
which we will denote $\Nu_\ell^\mathrm{ev}$.
These modular functors are anomalous.

\begin{remark}
  The terminology "level" in "$\SU$ modular functor of level $2\ell$" is related to but different from our definition of "a level" for a modular functor
  (\Cref{definitionlevel}). The relation is that $(2\ell,8\ell)$ is a level for $\Nu_\ell$,
  and, for $\ell$ odd, $(\ell,4\ell)$ is a level for $\Nu_\ell^\mathrm{ev}$. Note that this notion of level does not
  match the usual one in conformal field theory: the $\SU$ modular functor at level $2\ell$
  is related to the $\mathfrak{sl}_2$ modular functor at conformal level $\ell-2$.
\end{remark}

We refer to the work of C. Blanchet, N. Habegger, G. Masbaum, and P. Vogel \cite{blanchetTopologicalQuantumField1995}
for a construction of the $\SU$ and $\SO$ modular functors. Note that in their paper, the modular functors are defined
over cyclotomic fields. Here, we choose any embedding of the cyclotomic field into $\C$.
This choice is irrelevant to the question of rigidity: proving rigidity for one is proving it for all of them, as it is a Galois invariant property.
Only the following properties of these functors will be used in this paper.

\begin{proposition}\label{propositionSOmodularfunctor}
    Let $\ell\geq 3$. Then the modular functor $\Nu=\Nu_\ell$ satisfies
    \begin{description}
      \item[(I)] $\{\lambda\mid \Nu(S_1^1,\lambda)\neq 0\}=\{\lambda\mid \Nu(S_2^1,\lambda)\neq 0\}$.
    \end{description}
    for $\Lambda=\Lambda_\ell$. The same properties hold, when $\ell$ is odd, for $\Nu=\Nu_\ell^\mathrm{ev}$
    and $\Lambda=\Lambda_\ell^\mathrm{ev}$.\footnote{In fact, in both cases, the set in \textbf{(I)} is that of even numbers in $\Lambda$. For
    the $\SO$ case, this is all of $\Lambda$.}
    Moreover, if $\ell$ is an (odd) prime number, both $\Nu_\ell$ and $\Nu_\ell^\mathrm{ev}$ satisfy
    $$\text{\emph{\textbf{(II)}} For every }\lambda,\mu\in\Lambda\text{, if }\lambda\neq\mu\text{, then }t_\lambda\neq t_\mu.$$
\end{proposition}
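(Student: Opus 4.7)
The plan is to reduce both statements to explicit computations with the truncated Clebsch--Gordan fusion rules and the topological spins of the $\sltwo$-type modular categories underlying $\Nu_\ell$ and $\Nu_\ell^{\mathrm{ev}}$. Concretely, I would use two inputs that come from the skein/Reshetikhin--Turaev construction of \cite{blanchetTopologicalQuantumField1995}: the fusion coefficient $N^\nu_{\lambda\mu} := \dim \Nu(S_0^3,\lambda,\mu,\nu)$ is nonzero iff $\lambda+\mu+\nu$ is even, $|\lambda-\mu| \leq \nu \leq \lambda+\mu$, and $\lambda+\mu+\nu \leq 2(\ell-2)$; and the Dehn twist scalar of \Cref{remarklevel} is given by $t_\lambda = \zeta^{\lambda(\lambda+2)}$ for some root of unity $\zeta$ of order $4\ell$.

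For (I), I would apply the gluing axiom iteratively to reduce dimensions to fusion coefficients. A single non-separating cut in $S_1^1$ gives $\dim \Nu(S_1^1,\lambda) = \sum_\mu N^\lambda_{\mu\mu}$; by parity this is nonzero precisely when $\lambda$ is even, in which case $\mu = \lambda/2$ provides a nonzero term (the bound $\lambda \leq \ell-2$ being automatic from $\lambda \in \Lambda$). For $S_2^1$, two successive non-separating cuts yield $\dim\Nu(S_2^1,\lambda) = \sum_{\mu,\nu} \dim\Nu(S_0^5,\nu,\nu,\mu,\mu,\lambda)$. The same parity argument forces any nonzero term to have $\lambda$ even; conversely, for even $\lambda$, the term $\nu = 0,\ \mu = \lambda/2$ reduces via the vacuum axiom to $N^\lambda_{\lambda/2,\lambda/2}$, which is nonzero as above. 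This establishes (I) for both $\Nu_\ell$ and $\Nu_\ell^{\mathrm{ev}}$, with the common set being the even labels in $\Lambda$.

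For (II), the condition $t_\lambda = t_\mu$ is equivalent to $(\lambda-\mu)(\lambda+\mu+2) \equiv 0 \pmod{4\ell}$. Since $\ell$ is an odd prime and $\lambda \neq \mu$ lie in $\{0,\dots,\ell-2\}$, one has $0 < |\lambda-\mu| < \ell$ and $2 \leq \lambda+\mu+2 < 2\ell$, so the prime $\ell$ can divide the product only if $\lambda+\mu+2 = \ell$. But then $\lambda+\mu = \ell-2$ is odd, whence $\lambda-\mu$ is odd, which is incompatible with the residual divisibility $4 \mid (\lambda-\mu)$ needed to absorb the remaining factor of $4$. Hence $t_\lambda \neq t_\mu$, and the same argument applies a fortiori to the even-label subset $\Lambda_\ell^{\mathrm{ev}}$.

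The main obstacle I anticipate is purely bookkeeping: the scalars $t_\lambda$ of \Cref{remarklevel} are defined intrinsically from the modular functor, so one must check (or cite) that the quantum-group/skein formula $t_\lambda = \zeta^{\lambda(\lambda+2)}$ really computes them in the normalization of \cite{blanchetTopologicalQuantumField1995}. Since only the ratios $t_\lambda/t_\mu$ enter the argument, the projective anomaly plays no role, and this reduces to a straightforward convention check rather than a substantive difficulty.
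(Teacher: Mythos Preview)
Your proposal is correct and follows essentially the same route as the paper's brief proof: deduce \textbf{(I)} from the genus-zero fusion rules (the paper's \Cref{colourconditions}) by iterated application of the gluing axiom, and deduce \textbf{(II)} from the closed formula for the twist scalars together with an elementary divisibility argument modulo~$\ell$. The only difference is normalization: the paper quotes the Kauffman-bracket form $t_\lambda=(-1)^\lambda\zeta_\ell^{\lambda(\lambda+2)}$ rather than your conformal-weight form $\zeta^{\lambda(\lambda+2)}$, but the ensuing congruence analysis is the same in spirit, and you have already flagged this as a convention check.
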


\begin{proof}
    Property \textbf{(I)} is deduced from \Cref{colourconditions} below and repeated use of the gluing axiom, see \cite{blanchetTopologicalQuantumField1995}.
    Property \textbf{(II)} is easily deduced from the fact that $t_\lambda=(-1)^\lambda\zeta_\ell^{\lambda(\lambda+2)}$, where $\zeta_\ell$
    is a primitive $\ell$-th root of unity in the $\SO$ case and a primitive $2\ell$-th root of unity in the $\SU$ case.
    See \cite[section 3]{blanchetThreemanifoldInvariantsDerived1992} or \cite[Lemma 2.5]{marcheIntroductionQuantumRepresentations2021}
    for the proof of this formula.
\end{proof}

\begin{proposition}\label{colourconditions}
    Let $a,b,c\in\Lambda$ for $\Lambda=\Lambda_\ell$ (respectively $\Lambda=\Lambda_\ell^\mathrm{ev}$).
    Then $\Nu(S_0^3,a,b,c)\neq 0$, for $\Nu=\Nu_\ell$ (respectively $\Nu=\Nu_\ell^\mathrm{even}$), if and only if
    \begin{itemize}
        \item $a+b+c$ is even;
        \item $a,b,c$ verify triangular inequalities, ie. $|a-b|\leq c \leq a+b$;
        \item $a+b+c<2\ell-2$.
    \end{itemize}
    Moreover, in these cases, $\Nu(S_0^3,a,b,c)$ is of dimension $1$.
\end{proposition}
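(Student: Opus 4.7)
My plan is to rely on the explicit skein-theoretic construction of the modular functors $\Nu_\ell$ and $\Nu_\ell^{\mathrm{ev}}$ given in \cite{blanchetTopologicalQuantumField1995}. In that framework, $\Nu(S_0^3,a,b,c)$ is realized as a skein module of the thickened pair of pants in which each boundary circle is cabled by $a$, $b$, $c$ strands and decorated with the corresponding Jones--Wenzl idempotent $p_a$, $p_b$, $p_c$. A standard basis of this module is given by trivalent graphs in the pair of pants whose three edges carry the labels $a$, $b$, $c$ and meet at a single interior vertex; up to isotopy there is at most one such graph, giving immediately $\dim \Nu(S_0^3,a,b,c)\leq 1$.

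Next I would determine when this generator is nonzero. The parity condition $a+b+c\in 2\Z$ and the triangle inequalities $|a-b|\leq c\leq a+b$ are required merely for a non-crossing matching of the $a+b+c$ cabled strands through the pair of pants to exist, matching the strand counts on each boundary: without them no trivalent graph can be drawn and the skein module is zero for trivial combinatorial reasons. Under these conditions, by standard properties of Jones--Wenzl idempotents, the unique generator vanishes in the skein module if and only if the theta graph $\theta(a,b,c)$, obtained by capping off the trivalent vertex with another trivalent vertex of the same type, evaluates to zero in the Kauffman bracket skein theory at the specified root of unity.

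The substantive step is therefore to show that, under the parity and triangle conditions, $\theta(a,b,c)\neq 0$ precisely when $a+b+c<2\ell-2$. For this I would invoke the classical closed formula for $\theta(a,b,c)$ as a ratio of quantum factorials in the quantities $(a+b+c)/2+1$, $(a+b-c)/2$, $(b+c-a)/2$, $(a+c-b)/2$ (see for instance Kauffman--Lins or \cite{blanchetThreemanifoldInvariantsDerived1992}). At the primitive $2\ell$-th root of unity used in the $\SU$ case (respectively the primitive $\ell$-th root for $\SO$), the quantum integer $[k]_q$ vanishes exactly when $k$ is divisible by $\ell$, so $\theta(a,b,c)=0$ iff some argument reaches $\ell$, which under the triangle and parity constraints is equivalent to $a+b+c\geq 2\ell-2$.

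The main obstacle is the bookkeeping of conventions: one must carefully match the normalization of the Jones--Wenzl idempotents and the value of $q$ used in \cite{blanchetTopologicalQuantumField1995} with our parity conditions (even labels for $\SO$, all labels in $\Lambda_\ell$ for $\SU$) to ensure the correct truncation threshold $2\ell-2$ appears in both cases, and to rule out accidental vanishings of the numerator at labels $a,b,c\leq \ell-2$. Once these conventions are pinned down, both claims of the proposition follow at once.
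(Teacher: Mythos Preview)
Your sketch is essentially correct and follows the standard skein-theoretic argument from \cite{blanchetTopologicalQuantumField1995,blanchetThreemanifoldInvariantsDerived1992}. Note, however, that the paper does not supply its own proof of this proposition: it is stated as a known fact from the Blanchet--Habegger--Masbaum--Vogel construction and then used (in the proof of \Cref{propositionSOmodularfunctor}) as input. So there is no ``paper's own proof'' to compare against; your outline is precisely the content of the cited references, and is the right way to fill in the details if one wanted an independent argument.
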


The most crucial property of $\Nu$ we will need for \Cref{mainresult} is irreducibility, which is true for $\Nu=\Nu_\ell$ and $\Nu=\Nu_\ell^\mathrm{ev}$
when $\ell$ is prime but not in general
(see \cite[Th. 1.6.(ii), (iii)]{blanchetTopologicalQuantumField1995}).
The following Proposition follows from a result of \cite{koberdaIrreducibilityQuantumRepresentations2018},
where they apply the method of \cite{robertsIrreducibilityQuantumRepresentations2001} to a cleverly chosen special case of a Proposition of
\cite{blanchetTopologicalQuantumField1995}. See \cite[App. A]{godfardRigidityFibonacciRepresentations2025} for details and precise citations. 

\begin{proposition}\label{propositionirreducibility}
    Let $\ell\geq 3$ be a prime number. Then for any colored surface $(S,\underline{\lambda})$,
    the representation of $\tildeMod{S}$ on $\Nu_\ell(S,\underline{\lambda})$ is irreducible. The same result holds for $\Nu_\ell^\mathrm{ev}$.
\end{proposition}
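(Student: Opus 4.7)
The plan is to follow the strategy of Roberts \cite{robertsIrreducibilityQuantumRepresentations2001}, as generalized in \cite{koberdaIrreducibilityQuantumRepresentations2018}, which reduces the irreducibility statement to a base case established in \cite{blanchetTopologicalQuantumField1995}. The key structural inputs from the excerpt are the gluing axiom \textbf{(G)}, Remark \ref{remarklevel} on the diagonal action of Dehn twists along glued curves, Proposition \ref{propositionSOmodularfunctor} \textbf{(II)} on distinct twist eigenvalues at prime levels, and the one-dimensionality result of Proposition \ref{colourconditions}.

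First I would reduce to the case where $S$ is connected by monoidality of $\Nu$. Then I would fix a pants decomposition of $S$, i.e.\ a maximal system of disjoint simple closed curves $\gamma_1,\dotsc,\gamma_k$ cutting $S$ into thrice-punctured spheres. Iterating axiom \textbf{(G)} produces a decomposition
\[
  \Nu_\ell(S,\ul)\;\simeq\;\bigoplus_{\um}\,\bigotimes_{P}\Nu_\ell(P,\text{colors on }\partial P),
\]
indexed by admissible colorings $\um$ of the cut curves $\gamma_i$; by Proposition \ref{colourconditions}, each summand has dimension at most one. Remark \ref{remarklevel} shows that each Dehn twist $T_{\gamma_i}$ acts diagonally on this decomposition with eigenvalue $t_{\mu_i}$ on the $\um$-block.

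Property \textbf{(II)}, which holds precisely because $\ell$ is prime, gives that the scalars $t_\mu$ are pairwise distinct in $\mu\in\Lambda$. Hence the joint eigenspaces of the commuting family $\{T_{\gamma_i}\}$ are exactly the lines indexed by $\um$. Any $\tildeMod{S}$-subrepresentation $W\subseteq\Nu_\ell(S,\ul)$ is in particular stable under all $T_{\gamma_i}$, so $W$ is a direct sum of such $\um$-blocks. It then suffices to prove that, given any nonzero block in $W$, every admissible $\um$-block lies in $W$.

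This last propagation step is the hardest part, and is precisely where one invokes Roberts' argument. One uses additional mapping class group elements—half-twists in four-holed-sphere pieces and $S$-moves in one-holed-torus pieces—to relate different colorings $\um$ on the same pants decomposition, together with changes of pants decomposition. Concretely, a modular-functor subrepresentation $W$ gives, for each four-holed piece $S_0^4$ or one-holed torus $S_1^1$ in the decomposition, a sub-collection of admissible colorings closed under the corresponding $S_0^4$- or $S_1^1$-representation; Roberts' lemma asserts that these small-surface representations are themselves irreducible at prime $\ell$, forcing the collection to be everything or empty. To deduce the proposition for arbitrary colored surfaces $(S,\ul)$, I would, following \cite{koberdaIrreducibilityQuantumRepresentations2018}, glue $(S,\ul)$ to a mirror copy via $\Nu(S_0^2,\lambda,\lambda)\neq 0$ so as to realize $\Nu_\ell(S,\ul)$ as a factor in the decomposition of $\Nu_\ell$ on a specific larger \emph{closed} surface to which the Blanchet-Habegger-Masbaum-Vogel irreducibility case applies, and then extract irreducibility of the factor from irreducibility of the ambient representation. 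The analogous argument runs verbatim for $\Nu_\ell^{\mathrm{ev}}$ since both \textbf{(II)} and the base-case irreducibility hold in the $\SO$ setting at prime $\ell$.
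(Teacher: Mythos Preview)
Your outline matches the route the paper takes: it simply cites Roberts' method \cite{robertsIrreducibilityQuantumRepresentations2001}, as adapted by Koberda--Santharoubane \cite{koberdaIrreducibilityQuantumRepresentations2018} using a special case of a proposition of \cite{blanchetTopologicalQuantumField1995}, and refers to \cite[App.~A]{godfardRigidityFibonacciRepresentations2025} for details. Your steps 1--4 are a correct sketch of Roberts' reduction: the pants decomposition, the one-dimensionality from \Cref{colourconditions}, the separation of the $\um$-lines via property \textbf{(II)}, and the propagation via the small pieces $S_0^4$ and $S_1^1$.

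The only place where your write-up drifts is step 5. As stated, ``extract irreducibility of the factor from irreducibility of the ambient representation'' is not valid: if $\hat{S}$ is the double of $(S,\ul)$ and $\Nu(\hat{S})$ is $\tildeMod{\hat{S}}$-irreducible, a summand $\Nu(S,\ul)\otimes\Nu(\bar{S},\ul)$ in the gluing decomposition is only invariant under the smaller group $\tildeMod{S}\times\tildeMod{\bar{S}}\cdot\langle T_{\gamma_i}\rangle$, and irreducibility for the big group does not descend to summands for a subgroup. What actually happens in \cite{koberdaIrreducibilityQuantumRepresentations2018} is that the doubling trick is used the other way around: it reduces the \emph{base case} of the Roberts induction---irreducibility of the small-surface representations with arbitrary boundary colors---to a concrete non-vanishing statement from \cite{blanchetTopologicalQuantumField1995}. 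Once that base case is in hand, your steps 1--4 already handle arbitrary $(S,\ul)$ and step 5 is superfluous.
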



\section{Main result}\label{sectionmainresult}


\begin{theorem}\label{mainresult}
  Let $\ell\geq 3$ be a prime number. Denote by $\Nu$ the $\SU$ modular functor $\Nu_\ell$ of level $2\ell$,
  or the $\SO$ modular functor $\Nu_\ell^\mathrm{ev}$ of level $\ell$. Then for any $g\geq 7$,
  the representation of $\Modu{S_g}$ on $\Nu(S_g)$ is (cohomologically) rigid, i.e. $H^1(\Modu{S_g},\ad \Nu(S_g))=0$.
\end{theorem}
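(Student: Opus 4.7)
The plan is to rephrase the cohomological rigidity in the geometric language of Section 2: after passing to the appropriate twisted moduli stacks, the statement becomes $H^1(\Mgrb{g}{}{r},\ad\Nu_g)=0$ for $g\geq 7$ and a suitable level $r$. The overall strategy is then to convert a hypothetical nonzero class $\psi$ in this group into a nontrivial first-order deformation of (a suitable truncation of) the modular functor $\Nu$, and finally to invoke Ocneanu rigidity of modular categories, via the equivalence between modular functors and modular categories, to force the deformation, and hence $\psi$, to vanish.

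Concretely, starting from $\psi\in H^1(\Mgrb{g}{}{r},\ad\Nu_g)$ I would build a collection of auxiliary classes $\psi_h(\ul)\in H^1(\Mgrb{h}{n}{r},\ad\Nu_h(\ul))$ indexed by a carefully chosen set $I$ of triples $(h,n,\ul)$ with $h\leq (g+1)/2$. The construction uses the gluing morphism
\[ i:\Mgrb{h}{n}{r}\times\Mgrb{h}{n}{r}\lra\Mgrb{2h+2n-1}{}{r} \]
that glues $S_h^n$ to its mirror along matching boundary labels. Pulling $\ad\Nu_{2h+2n-1}$ back along $i$ and applying the gluing axiom \textbf{(G-sep)} decomposes it as $\bigoplus_{\umu}\ad\Nu_h(\umu)\boxtimes\ad\Nu_h(\umu)$; Künneth, together with the fact that $H^0(\Mgrb{h}{n}{r},\ad\Nu_h(\umu))=\C$ (a consequence of irreducibility of $\rho_h(\umu)$ at prime $\ell$), extracts the $\umu=\ul$ summand and produces $\psi_h(\ul)$. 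Verifying that this collection satisfies compatibilities analogous to the modular functor axioms (gluing, vacuum, permutations) would then rely systematically on the irreducibility of all $\rho_h(\ul)$.

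Next I would promote each cohomology class $\psi_h(\ul)$ to an honest $\Ceps$-deformation of $\Nu_h(\ul)$ in a coherent way. For this I would invoke non-Abelian Hodge theory on the proper smooth Kähler stacks $\Mgrb{h}{n}{r}$ to obtain canonical harmonic representatives in $Z^1_{dR}(\Mgrb{h}{n}{r},\ad\Nu_h(\ul))$. Since these representatives are functorial under direct sums, tensor products, and algebraic pullbacks, they assemble the classes $\psi_h(\ul)$ into a deformation of the $I$-truncation of $\Nu$. Ocneanu rigidity, in a truncated form adapted to $I$, forces this deformation to be trivial, so $\psi_h(\ul)=0$ for every $(h,n,\ul)\in I$. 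Since $I$ is chosen to contain $(4,0,())$, this gives $\psi_4=0$, and the partial homological stability injection $H^1(\Mgrb{h+1}{}{r},\ad\Nu_{h+1})\hookrightarrow H^1(\Mgrb{h}{}{r},\ad\Nu_h)$ for $h\geq 4$ lets me compare $\psi$ with $\psi_4$ and conclude $\psi=0$.

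I expect the main obstacle to be the verification that the assignment $\psi\mapsto(\psi_h(\ul))_{(h,n,\ul)\in I}$ really produces a compatible collection in the strong sense required to define a deformation of a truncated modular functor: the decomposition of $i^*\ad\Nu$ must interact correctly with Künneth, with iterated gluings, with the vacuum and permutation axioms, and the set $I$ must be engineered so that all of these compatibilities close up while still containing enough low-genus data (in particular $(4,0,())$) to mesh with partial stability. Once this bookkeeping is under control, the lifting via harmonic forms is conceptually clean but still requires care to pass from a closed $\ad\Nu$-valued $1$-form to a genuine deformation of the flat connection in a functorial manner, and the applicability of Ocneanu rigidity to the $I$-truncated datum must be extracted carefully from the modular category/modular functor equivalence.
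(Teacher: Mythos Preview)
Your proposal is correct and follows essentially the same approach as the paper's proof: reduce to the geometric statement on $\Mgrb{g}{}{r}$, build a compatible collection $(\psi_h(\ul))_{(h,n,\ul)\in I}$ from $\psi$ using irreducibility and K\"unneth, lift it to a deformation of the $I$-truncated modular functor via harmonic representatives from non-Abelian Hodge theory, kill the deformation by Ocneanu rigidity, and conclude via the partial stability injection for $h\geq 4$.

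One refinement worth noting: your description of the construction of $\psi_h(\ul)$ via the ``doubling'' map $i:\Mgrb{h}{n}{r}\times\Mgrb{h}{n}{r}\ra\Mgrb{2h+2n-1}{}{r}$ is the paper's own motivational outline for the \emph{stable} case, but as written it lands in genus $2h+2n-1$ rather than $g$, so a bridge is missing. In the actual implementation the paper works instead with an intermediate genus $g'$ (e.g.\ $g'=4$) satisfying $g\geq 2g'-1$: one first restricts $\psi$ to a class $\psi_{g'}\in H^1(\Mgrb{g'}{}{r},\ad\Nu_{g'})$, then defines the truncation set $I=I_{g'}^\Nu$ as the triples $(h,n,\ul)$ for which $(S_h^n,\ul)$ admits an embedding into $S_{g'}$ with nonvanishing complement, and defines $\psi_h(\ul)$ by pulling $\psi_{g'}$ back along such an embedding. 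The crux---and this is exactly the bookkeeping you anticipate---is showing $\psi_h(\ul)$ is independent of the chosen embedding; this is where the inequality $g\geq 2g'-1$ is used, to produce a \emph{connected} complement when embedding into $S_g$ itself, which is then essentially unique and serves as a common witness. This is a technical reorganization of your doubling idea rather than a different strategy.
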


\begin{remark}\label{remarkassumptions}
  In fact, we prove that any modular functor satisfying properties \textbf{(I)}, \textbf{(II)} of \Cref{propositionSOmodularfunctor}
  and irreducibility (\Cref{propositionirreducibility}) satisfies the conclusion of the theorem.
\end{remark}

Note that in this context, cohomological rigidity never depends on the target group of the representation
(see \cite[Prop. 4.2]{godfardRigidityFibonacciRepresentations2025}),
or, when $g\geq 3$, on whether the source group is $\tildeMod{S_g}$, $\Modu{S_g}$, $\tModl{\ell}{S_g}$, or $\Modl{\ell}{S_g}$.

\begin{proposition}[{\cite[prop 4.1]{godfardRigidityFibonacciRepresentations2025} and \Cref{propositionrirreleventinH1}}]\label{propositionfiniteorderdeformation}
  Let $g\geq 3$, $n\geq 0$, $r\geq 1$, and $d\geq 0$, then for any representation $\rho:\tModl{r}{S_g^n}\ra \GLn{d}{\C}$:
  \begin{align*}
    &H^1(\Modl{r}{S_g^n},\ad \rho)=H^1(\tModl{r}{S_g^n},\ad \rho)\\
    &\phantom{H^1}=H^1(\tildeMod{S_g^n},\ad \rho)=H^1(\Modu{S_g^n}^r,\ad \rho).
  \end{align*}
\end{proposition}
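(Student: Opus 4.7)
The statement asserts that four cohomology groups agree. The outer three equalities are referenced to \cite[Prop. 4.1]{godfardRigidityFibonacciRepresentations2025}, while the new input is the last equality coming from \Cref{propositionrirreleventinH1}. My plan is to treat each of the three consecutive equalities separately, in each case using the Lyndon--Hochschild--Serre spectral sequence for an appropriate short exact sequence of groups. The underlying short exact sequences are, respectively, $1\to\mu_s\to\tModl{r}{S_g^n}\to\Modl{r}{S_g^n}\to 1$ (anomaly gerbe), $1\to N_r\to\tildeMod{S_g^n}\to\tModl{r}{S_g^n}\to 1$ where $N_r$ is the normal subgroup generated by $r$-th powers of pure Dehn twists, and $1\to\Z\to\tildeMod{S_g^n}\to\Modu{S_g^n}\to 1$ (the central $\Z$-extension). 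A uniform observation across all three steps is that adjoint representations are insensitive to twisting by a character, so whenever a kernel element acts on $\rho$ through scalar matrices, it acts trivially on $\ad\rho$; this applies to $\mu_s$, to $\Z$, and of course to $N_r$ (on which $\rho$ itself is trivial by hypothesis).

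For the first equality, $\mu_s$ is finite and acts trivially on the $\C$-vector space $\ad\rho$, so Maschke gives $H^q(\mu_s,\ad\rho)=0$ for $q\geq 1$, and the spectral sequence collapses. For the last equality, $\Z$ acts trivially on $\ad\rho$, giving $H^1(\Z,\ad\rho)=\ad\rho$ and a 5-term exact sequence
\begin{equation*}
0\to H^1(\Modu{S_g^n},\ad\rho)\to H^1(\tildeMod{S_g^n},\ad\rho)\to (\ad\rho)^{\Modu{S_g^n}}\xrightarrow{\delta} H^2(\Modu{S_g^n},\ad\rho).
\end{equation*}
The differential $\delta$ is cup product with the extension class $e\in H^2(\Modu{S_g^n},\Z)$. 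Showing $\delta$ is injective — so that the comparison map on $H^1$ is an isomorphism — is the heart of the argument and would be derived from the non-degeneracy of the (shifted) signature cocycle pairing when restricted to invariants in $\ad\rho$, using the identification of $e$ with (a multiple of) the signature class and $g\geq 3$ to ensure enough geometric cycles.

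For the middle equality, the kernel $N_r$ is infinitely generated as the normal closure of the $T_\gamma^r$. Since $\rho$ factors through $\tModl{r}$, we have $H^1(N_r,\ad\rho)=\Hom(N_r^{\ab},\ad\rho)$, and the 5-term sequence reduces the statement to vanishing of $\Hom_{\tModl{r}}(N_r^{\ab},\ad\rho)$. Here one uses that for $g\geq 3$ all non-separating Dehn twists form a single $\Modu{S_g^n}$-conjugacy class (and separating twists a few more classes), so that $N_r^{\ab}\otimes\C$ as a $\tModl{r}$-module is generated by a small set of orbit classes. A $\tModl{r}$-equivariant homomorphism from $N_r^{\ab}$ to $\ad\rho$ is then determined by a collection of vectors $v_\gamma\in\ad\rho$ invariant under the image of the centralizer of $T_\gamma$ in $\tModl{r}$. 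Using that the image of $T_\gamma$ itself in $\tModl{r}$ has order $r$ together with standard generation facts for centralizers and, when needed, the irreducibility of $\rho$ from \Cref{propositionirreducibility} or the semisimplicity discussion, one argues that each $v_\gamma$ is forced to vanish.

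The hard part will be the middle step: controlling $\Hom_{\tModl{r}}(N_r^{\ab},\ad\rho)$ requires a good handle on the $\tModl{r}$-module structure of $N_r^{\ab}$, and this is where the hypothesis $g\geq 3$ enters essentially (for instance via the triviality of $H_1(\Modu{S_g^n},\Z)$ and the standard conjugacy facts about Dehn twists). The first and third equalities are essentially formal consequences of spectral sequence degeneracy and a transgression computation, while the middle equality is the genuinely nontrivial structural input.
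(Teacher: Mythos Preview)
Your three-step Lyndon--Hochschild--Serre strategy is the right framework, but the middle step --- comparing $H^1(\tildeMod{S_g^n},\ad\rho)$ with $H^1(\tModl{r}{S_g^n},\ad\rho)$ --- has a genuine gap, and the paper handles the analogous passage quite differently.

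You aim to show that the \emph{target} $\Hom_{\tModl{r}}(N_r^{\ab},\ad\rho)$ of the restriction map vanishes. This is stronger than what inflation--restriction requires (only the \emph{map} into that target must vanish), and you do not actually establish it: a $\tModl{r}$-equivariant homomorphism is determined by centralizer-fixed vectors $v_\gamma\in(\ad\rho)^{\mathrm{Cent}(T_\gamma)}$, but these invariant spaces are essentially never zero (they always contain $\id$), and you supply no relations in $N_r$ that would force the $v_\gamma$ to vanish. Appealing to irreducibility of $\rho$ is not permitted either, since the statement is for arbitrary $\rho$.

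The paper (\Cref{propositionrirreleventinH1}) instead proves that the restriction $H^1(\Modu{S_g^n},\rho)\to H^1(T_\gamma^{\Z},\rho)$ is zero for every simple closed curve $\gamma$; this suffices by inflation--restriction, since $N_r$ is normally generated by the $T_\gamma^r$ and any cocycle trivial on each $T_\gamma^{\Z}$ vanishes on $T_\gamma^r$. The key move, absent from your sketch, is to factor this restriction through $H^1(\Modu{S_\gamma},\rho)$ for a subsurface $S_\gamma$ of genus $\geq 2$ containing $\gamma$ in its boundary. One then splits $\rho|_{\Modu{S_\gamma}}$ into its trivial isotypic piece (killed because $\Modu{S_\gamma}$ has finite abelianization) and a complement (handled via centrality of $T_\gamma$ in $\Modu{S_\gamma}$ and a K\"unneth argument). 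Crucially, this splitting requires $\rho|_{\Modu{S_\gamma}}$ to be semisimple, which the paper obtains as a nontrivial consequence of non-Abelian Hodge theory (\Cref{remarkpullbacksemisimple}); this Hodge-theoretic input is a structural ingredient your outline does not anticipate.
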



\section{\texorpdfstring{Proof of \Cref{mainresult}}{Proof of Theorem \ref{mainresult}}}\label{sectionproof}


This section contains most arguments in the proof of the main result. Only the proof of partial homological stability for $H^1$
with coefficients in $\ad\Nu$ (\Cref{propositioninjectivity}) is deferred to \Cref{sectionpartialstability}.
For an outline of the proof, see \Cref{suboutline}.
This outline does not follow the order of this section and we advise the reader to consult it before reading the proof.

In this section, unless otherwise specified, $\Nu$ is a modular functor with set of colors $\Lambda$ according to the equivalent
\Cref{definitiongeometricmodularfunctor,definitiongeometricmodularfunctor},
satisfying properties \textbf{(I)}, \textbf{(II)} of \Cref{propositionSOmodularfunctor} and irreducibility (\Cref{propositionirreducibility}).
We will choose a level $(r,s)$ for $\Nu$ (see \Cref{definitionlevel}).

We will denote by $\ad\Nu$ the modular functor with set of colors $\Lambda^2$
defined by $\ad\Nu_g((\lambda_1,\mu_1),\dotsc,(\lambda_n,\mu_n))=\Nu_g(\lambda_1,\dotsc,\lambda_n)\otimes\Nu_g(\mu_1,\dotsc,\mu_n)^*$,
where $(-)^*$ denotes the dual vector space. The gluing isomorphisms and other extra data for $\ad\Nu$ is induced by that of $\Nu$.
Note that $\ad\Nu$ is non-anomalous. We will abbreviate $\ad\Nu_g((\lambda_1,\lambda_1),\dotsc,(\lambda_n,\lambda_n))$ to
$\ad\Nu_g(\ul)$.\footnote{$\ad\Nu_g(\ul)$ is then exactly the adjoint local system of $\Nu_g(\ul)$. Hence the notation $\ad\Nu$.}

We shall prove \Cref{mainresult} for $\Nu$. Note that by combining \Cref{propositionfiniteorderdeformation}
and \Cref{remarkboundary}, we need only prove that for $g\geq 7$, $H^1(\Mgrb{g}{}{r},\ad\Nu_g)=0$.


\subsection{Building a collection of low genus deformation classes satisfying modular functor axioms from a deformation in high genus}\label{subbuildingdeformationofmodularfunctor}


Let us fix $g\geq 7$ and a cohomology class $\psi$ in $H^1(\Mgrb{g}{}{r},\ad\Nu_g)$, i.e. an equivalence class of deformation
of the flat bundle $\Nu_g$ on $\Mgrbt{g}{}{r}{s}$.

Fix $g'\geq 4$ such that $g\geq 2g'-1$ (for example, $g'=4$). 
\begin{definition}\label{definitionembeddable}
  We will call a triplet $(h,n,\ul)$ with $h\geq 0$, $n\geq 0$ and $\ul\in\Lambda^n$
  \emph{embeddable in $S_{g'}$ for $\Nu$} if there exists a (possibly disconnected) colored surface $\Sigma$ such that
  \begin{enumerate}
    \item $S_{g'}$ can be obtained from $(S_h^n,\ul)\sqcup\Sigma$ by a gluing $G:S_h^n\sqcup\Sigma\ra S_{g'}$ of pairs of boundary components with matching colors,
    such that no $2$ boundary components of $S_h^n$ are glued together;
    \item connected components of $\Sigma$ are not homeomorphic to $S_0^0$, $S_0^1$ or $S_1^0$;
    \item if $\Nu(S_h^n,\ul)\neq 0$, then $\Nu(\Sigma)\neq 0$.
  \end{enumerate}
  The choice of such $\Sigma$ and of the gluing $G$ in (1) will be called an \emph{embedding of $(S_h^n,\ul)$ in $S_{g'}$}.
\end{definition}

\begin{notation}
  We will denote by $I_{g'}^\Nu$ the set of $(h,n,\ul)$ that are embeddable in $S_{g'}$ for $\Nu$.
\end{notation}

For example, if $1\leq n\leq 5$, then $(0,n,\ul)$ is in $I_{g'}^\Nu$ for any $\ul$.
Indeed, $(S_0^n,\ul)\sqcup (S_0^n,\ul)$ can be glued into $S_{n-1}$. As $g'\geq 4$, we have $n-1\leq g'$.
If $n-1=g'$ we are done, otherwise $(S_0^n,\ul)\sqcup \left((S_0^{n+1},(\ul,0))\sqcup (S_{g'-(n-1)}^1,0)\right)$ can be glued into $S_{g'}$.
By a similar argument, for $1\leq n\leq 3$ and $\ul\in\Lambda^n$, $(1,n,\ul)$ is in $I_{g'}^\Nu$.

\begin{remark}
  The definition above is equivalent to $\Nu_h(\ul)$ appearing as a tensor factor in a direct summand after successive applications
  of the gluing axiom to $\Nu_{g'}$, such that all other tensor factors in that direct summand are non-zero.
\end{remark}

\begin{definition}[{\cite[Def. 5.10]{godfardHodgeStructuresConformal2025}}]\label{definitiontruncationset}
  Let $\Lambda$ be a set of colors. A truncation set is a subset $I$ of $\{(g,n,\ul)\mid g,n\geq 0,\:\ul\in\Lambda^n\}$
  satisfying the following $3$ conditions.
  \begin{description}
    \item[(P)] for any $n\geq 0$ and $\sigma\in S_n$ permutation, if $(g,n,\ul)\in I$, then $(g,n,\sigma(\ul))\in I$;
    \item[(0)] $\forall 1\leq n\leq 5,\:\forall \ul\in\Lambda^n,\: (0,n,\ul)\in I$;
    \item[(1)] $\forall 1\leq n\leq 3,\:\forall \ul\in\Lambda^n,\: (1,n,\ul)\in I$.
  \end{description}
\end{definition}

By the discussion above, \emph{$I_{g'}^\Nu$ is a truncation set.}

\begin{definition}[{\cite[Def. 5.11 and 5.18]{godfardHodgeStructuresConformal2025}}]\label{definitionItruncatedmodularfunctor}
  Let $\Lambda$ be a set of colors and $I$ a truncation set for $\Lambda$. By restricting the geometric definition of modular functor
  (\Cref{definitiongeometricmodularfunctor}) to those $(h,n,\ul)$ in $I$, one obtains the definition
  of geometric $I$-truncated modular functor. More precisely, A geometric $I$-truncated modular functor is a collection
  of $\C$-local systems $(\Nu_g(\ul))_{g,n,\ul\in\Lambda^n}$ over the over $\Mgrbt{h}{n}{r}{s}$ for $(g,n)\neq (0,0),(0,1),(1,0)$ and $(g,n,\ul)\in I$,
  satisfying the conditions of \Cref{definitiongeometricmodularfunctor} whenever they make sense.
  In axioms \textbf{(G-sep)} and \textbf{(G-nonsep)}, one must restrict the direct sums $\oplus_\mu$ to those summands corresponding to triplets in $I$.
\end{definition}

\begin{remark}
  The restriction of $\Nu$ to $I_{g'}^\Nu$ is an $I_{g'}^\Nu$-truncated modular functor.
  To check this, one needs to verify that for every gluing isomorphism involving $(h,n,\ul)$ in $I_{g'}^\Nu$,
  all triplets appearing in the non-zero direct summands are also in $I_{g'}^\Nu$. And this condition is ensured by point (3) of \Cref{definitionembeddable}.
  See \cite[Def. 5.13]{godfardHodgeStructuresConformal2025} for more on this.
\end{remark}

\begin{definition}\label{definitiondeformationofmodularfunctor}
  A deformation of a modular functor $\Nu$ is a modular functor of $\C[\epsilon]$ projective local systems such that taking the quotient by $\epsilon$
  recovers $\Nu$. Similarly for $I$-truncated modular functors.
\end{definition}

Here $\C[\epsilon]$ is short for the ring of dual numbers $\C[X]/(X^2)$ with $\epsilon=X$.
The aim of the rest of this subsection is to explain how to construct, from $\psi$ in $H^1(\Mgrb{g}{}{r},\ad\Nu_g)$, a collection of deformation classes
$\psi_h(\ul)\in H^1(\Mgrb{h}{n}{r};\ad\Nu_h(\ul))$ for $(h,n,\ul)$ in $I_{g'}^\Nu$ which are compatible with the axioms
of a modular functor. The definition below makes this notion precise.

\begin{definition}\label{definitioncompatiblecollection}
  Let $I$ be a truncation set.
  A \emph{collection of compatible deformation classes} of $\Nu$ an $I$-truncated modular functor is a collection of classes
  $\psi_h(\ul)\in H^1(\Mgrb{h}{n}{r};\ad\Nu_h(\ul))$ for $(h,n,\ul)$ in $I$ which satisfy the following axioms
  whenever they make sense.
  \begin{description}
      \item[(G-sep)] For each gluing map
      \begin{equation*}
        q:\Mgrb{h_1}{n_1+1}{r}\times\Mgrb{h_2}{n_2+1}{r}\lra \Mgrb{h_1+h_2}{n_1+n_2}{r}
      \end{equation*}
      and gluing isomorphism
      \begin{equation*}
        q^*\Nu_{h_1+h_2}(\lambda_1,\dotsc,\lambda_n)\simeq \bigoplus_\mu\Nu_{h_1}(\lambda_1,\dotsc,\lambda_{n_1},\mu)\otimes
        \Nu_{h_2}(\lambda_{n_1+1},\dotsc,\lambda_n,\mu),
      \end{equation*}
      we have a map
      \begin{multline*}
        \bigoplus_\mu H^1(\Mgrb{h_1}{n_1+1}{r};\ad \Nu_{h_1}(\lambda_1,\dotsc,\mu))\oplus H^1(\Mgrb{h_2}{n_2+1}{r};\ad \Nu_{h_2}(\lambda_{n_1+1},\dotsc,\mu))\\
        \lra H^1(\Mgrb{h_1}{n_1+1}{r}\times\Mgrb{h_2}{n_2+1}{r};q^*\ad \Nu_{h_1+h_2}(\lambda_1,\dotsc,\lambda_n))
      \end{multline*}
      where the direct sum is restricted to the $\mu$ with non-zero corresponding summand in the gluing isomorphism.
      This map is induced by the Künneth formula and the maps $\C\ra \ad \Nu_{h_1}(\lambda_1,\dotsc,\mu)$ and $\C\ra \ad \Nu_{h_2}(\lambda_{n_1+1},\dotsc,\mu)$.
      We then require that this map send $\oplus_\mu \psi_{h_1}(\lambda_1,\dotsc,\mu)\oplus\psi_{h_2}(\lambda_{n_1+1},\dotsc,\mu)$ to
      $q^*\psi_{h_1+h_2}(\lambda_1,\dotsc,\lambda_n)$.
      \item[(G-nonsep)] For each gluing map
      \begin{equation*}
        p:\Mgrb{h-1}{n+2}{r}\lra \Mgrb{h}{n}{r}
      \end{equation*}
      and gluing isomorphism
      \begin{equation*}
        p^*\Nu_h(\lambda_1,\dotsc,\lambda_n)\simeq \bigoplus_\mu\Nu_{h-1}(\lambda_1,\dotsc,\lambda_n,\mu,\mu),
      \end{equation*}
      we have an embedding
      \begin{align*}
        \bigoplus_\mu& H^1(\Mgrb{h-1}{n+2}{r};\ad \Nu_{h-1}(\lambda_1,\dotsc,\lambda_n,\mu,\mu))\\
        &\lra H^1(\Mgrb{h-1}{n+2}{r};p^*\ad \Nu_{h}(\lambda_1,\dotsc,\lambda_n))
      \end{align*}
      where the direct sum is restricted to the $\mu$ with non-zero corresponding summand in the gluing isomorphism.
      We then require that this embedding send $\oplus_\mu \psi_{h-1}(\lambda_1,\dotsc,\lambda_n,\mu,\mu)$ to
      $p^*\psi_{h}(\lambda_1,\dotsc,\lambda_n)$.
      \item[(N)] For each forgetful map $f:\Mgrb{h}{n+1}{r}\ra\Mgrb{h}{n}{r}$, and forgetful isomorphism
      \begin{equation*}
        f^*\Nu_h(\lambda_1,\dotsc,\lambda_n)\simeq \Nu_h(\lambda_1,\dotsc,\lambda_n,0),
      \end{equation*}
      we require that $f^*\psi_h(\lambda_1,\dotsc,\lambda_n)=\psi_h(\lambda_1,\dotsc,\lambda_n,0)$.
      \item[(Perm)] For each $\ul\in\Lambda^n$ and permutation isomorphism
      \begin{equation*}
        \Nu_h(\lambda_1,\dotsc,\lambda_n)\simeq \sigma^*\Nu_h(\lambda_{\sigma(1)},\dotsc,\lambda_{\sigma(n)}),
      \end{equation*}
      we require that $\sigma^*\psi_h(\lambda_{\sigma(1)},\dotsc,\lambda_{\sigma(n)})=\psi_h(\lambda_1,\dotsc,\lambda_n)$.
  \end{description}
\end{definition}

\begin{remark}
  As $H^1(\Mgrb{h}{n}{r},\ad\Nu_h(\ul))$ classifies deformations of $\Nu_h(\ul)$, a deformation of the modular functor $\Nu$ induces
  a collection of compatible deformation classes on $\Nu$. That any collection of compatible deformation classes comes from a
  deformation of $\Nu$ is non-trivial. This is the subject of \Cref{sublifting} and the proof relies on (non-Abelian) Hodge theory.
\end{remark}

\begin{remark}\label{remarkvacuumreducestogluing}
  If $I=I_{g'}^\Nu$ in \Cref{definitioncompatiblecollection}, the conditions \textbf{(N)} follow from the conditions \textbf{(G-sep)}.
  Indeed, notice that if $S_h$ is embeddable in $S_g'$, then $h=g'$. Hence is both $(h,n,\ul)$
  and $(h,n+1,(\ul,0))$ are in $I_{g'}^\Nu$, then $n>0$, and one may obtain $(S_h^{n+1},(\ul,0))$ by gluing
  $(S_h^n,\ul)$ to $(S_0^3,(\lambda_1,\lambda_1,0))$. The gluing isomorphism thus obtained is equivalent to the
  forgetful isomorphism, and the condition \textbf{(N)} on $\psi_h(\lambda_1,\dotsc,\lambda_n)$ and $\psi_h(\lambda_1,\dotsc,\lambda_n,0)$
  reduces to that imposed by \textbf{(G-sep)}.
\end{remark}

For a general truncated modular functor $\Nu$, the condition imposed by the axioms \textbf{(G-sep)} and \textbf{(G-nonsep)}
are very stringent. For example, in the case of \textbf{(G-nonsep)} $p^*\ad\Nu_h(\ul)$ decomposes as
$$\bigoplus_{\mu_1,\mu_2}\Nu_{h-1}(\lambda_1,\dotsc,\lambda_n,\mu_1,\mu_1)\otimes\Nu_{h-1}(\lambda_1,\dotsc,\lambda_n,\mu_2,\mu_2)^*$$
and hence its $H^1$ also decomposes accordingly.
However the axiom imposes that $p^*\psi_h(\ul)$ must be $0$ in the $H^1$ of the summands $\mu_1\neq\mu_2$.
In the case of \textbf{(G-sep)}, an additional restriction is imposed by the axiom
$\ad\Nu_{h_1}(\lambda_1,\dotsc,\mu)\otimes \ad\Nu_{h_2}(\lambda_{n_1+1},\dotsc,\mu)$ is a direct summand of $q^*\ad\Nu_{h_1+h_2}(\ul)$,
and by the Künneth formula, its $H^1$ decomposes as
\begin{multline*}
  H^1(\Mgrb{h_1}{n_1+1}{r},\ad\Nu_{h_1}(\lambda_1,\dotsc,\mu))\otimes H^0(\Mgrb{h_2}{n_2+1}{r},\ad\Nu_{h_2}(\lambda_{n_1+1},\dotsc,\mu))\\
  \oplus H^0(\Mgrb{h_1}{n_1+1}{r},\ad\Nu_{h_1}(\lambda_1,\dotsc,\mu))\otimes H^1(\Mgrb{h_2}{n_2+1}{r},\ad\Nu_{h_2}(\lambda_{n_1+1},\dotsc,\mu)).
\end{multline*}
The axiom then imposes that $q^*\psi_{h_1+h_2}(\ul)$ be in the image of
\begin{align*}
  &H^1(\Mgrb{h_1}{n_1+1}{r},\ad\Nu_{h_1}(\lambda_1,\dotsc,\mu))\otimes \C\\
  &\oplus \C\otimes H^1(\Mgrb{h_2}{n_2+1}{r},\ad\Nu_{h_2}(\lambda_{n_1+1},\dotsc,\mu))
\end{align*}
where the map is given by the scalar endomorphism maps $\C\ra \ad\Nu_{h_1}(\lambda_{1},\dotsc,\mu)$
and $\C\ra\ad\Nu_{h_2}(\lambda_{n_1+1},\dotsc,\mu)$.

However, when $\Nu$ satisfies property \textbf{(II)} of \Cref{propositionSOmodularfunctor} and irreducibility (\Cref{propositionirreducibility}),
the conditions described above disappear, thanks to the following Lemmas.

\begin{lemma}[{\cite[Cor. 6.2]{godfardRigidityFibonacciRepresentations2025}}]\label{lemmacenterkills}
  Let $h\geq 0$, $n\geq 0$ such that $2h-2+n>0$ and $\ul,\umu\in\Lambda^n$ where $\Lambda$ is the set of colors of a modular functor $\Nu$.
  Let $(r,s)$ be a level for $\Nu$.
  Assume that $\Nu$ satisfies condition \textbf{(II)} of \Cref{propositionSOmodularfunctor}.
  Then if $\ul\neq \umu,$ the cohomology group $H^1(\Mgrb{h}{n}{r},\Nu_h(\ul)\otimes\Nu_h(\umu)^*)$ is $0$.
\end{lemma}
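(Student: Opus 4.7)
The plan is to find a central element $z$ of $\Gamma := \pi_1(\Mgrb{h}{n}{r}) = \Modl{r}{S_h^n}$ that acts on $V := \Nu_h(\ul)\otimes\Nu_h(\umu)^*$ by a scalar $c \neq 1$, and then invoke the standard vanishing of cohomology with coefficients in a module on which a central group element acts by a non-identity scalar.

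Since $\ul \neq \umu$ (and hence $n\geq 1$), pick $i$ with $\lambda_i \neq \mu_i$. The Dehn twist $T_{\partial_i}$ around the $i$-th boundary component of $S_h^n$ admits a representative supported in an arbitrarily small collar of $\partial_i$, so it commutes with every element of $\Modu{S_h^n}$; its image $z$ in $\Gamma$ is therefore central. To identify the scalar by which $T_{\partial_i}$ acts on $\Nu_h(\ul)$, I would glue $S_h^n$ along $\partial_i$ to a boundary component of a second colored surface $\Sigma$ chosen so that $\Nu(\Sigma)\neq 0$ (such $\Sigma$ exists by the non-degeneracy axiom \textbf{(nonD)} together with the gluing axiom). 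The gluing axiom exhibits $\Nu_h(\ul)\otimes\Nu(\Sigma)$ as the $\mu=\lambda_i$ summand in the module of the glued surface, and by \Cref{remarklevel} the Dehn twist along the gluing curve — which, viewed on the $S_h^n$-side of the gluing, is exactly $T_{\partial_i}$ — acts on this summand by the scalar $t_{\lambda_i}$. Dually, $T_{\partial_i}$ acts on $\Nu_h(\umu)^*$ by $t_{\mu_i}^{-1}$, so $z$ acts on $V$ by $c := t_{\lambda_i} t_{\mu_i}^{-1}$. Property \textbf{(II)} of \Cref{propositionSOmodularfunctor} forces $c \neq 1$.

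Since $\Mgrb{h}{n}{r}$ is a $K(\Gamma,1)$-stack, one has $H^1(\Mgrb{h}{n}{r},V) = H^1(\Gamma,V)$. For any group $\Gamma$ and $\Gamma$-module $V$, the natural action of $\Gamma$ on $H^*(\Gamma,V)$ (combining conjugation on $\Gamma$ with the coefficient action on $V$) is trivial. Restricting this trivial action to the central element $z$: conjugation by $z$ is the identity on $\Gamma$, so the induced endomorphism of $H^1(\Gamma,V)$ reduces to multiplication by $c$. Hence multiplication by $c$ equals the identity on $H^1(\Gamma,V)$, and since $c\neq 1$ this forces $H^1(\Gamma,V)=0$. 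The only real subtlety in the argument is the second step — matching the intrinsic action of the boundary twist $T_{\partial_i}$ with the gluing scalar $t_{\lambda_i}$ of \Cref{remarklevel} — but this is essentially an unpacking of the gluing axiom. Everything else is a routine application of group cohomology with a central scalar action.
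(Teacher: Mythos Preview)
Your argument is correct and is exactly the ``center kills'' mechanism the lemma's label suggests; the paper does not prove the statement in-text but cites \cite[Cor.~6.2]{godfardRigidityFibonacciRepresentations2025}, where the same argument appears.

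One small correction: the claim that $\Mgrb{h}{n}{r}$ is a $K(\Gamma,1)$-stack is not true in general. For instance, when the quotient $\Modl{r}{S_h^n}$ is finite (as happens for $\Mgrp{1}{1}{5}$, whose fundamental group is the spherical triangle group $T(2,3,5)$, cf.\ the Fibonacci discussion at the end of the paper), the universal cover is a positive-dimensional simply connected proper variety and has nontrivial $\pi_2$. This does not affect your proof: you only need the identification $H^1(\Mgrb{h}{n}{r},V)\simeq H^1(\Gamma,V)$ in degree~$1$, and that is exactly the content of \Cref{propositionnonabHurwitz} (the non-Abelian Hurwitz theorem), valid for any connected DM stack regardless of higher homotopy. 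Replace your $K(\Gamma,1)$ sentence with a reference to that proposition and the argument is complete.
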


\begin{lemma}\label{lemmaKunneth}
  Let $\Nu$ be a modular functor with set of colors $\Lambda$, and $(r,s)$ a level for it.
  For $i=1,2$, let $h_i\geq 0$, $n_i\geq 0$ such that $2h_i-2+n_i>0$, and $\ul^i\in\Lambda^{n_i}$.
  Assume that $\Nu$ satisfies irreducibility (\Cref{propositionirreducibility}), that $\Nu_{h_1}(\ul^1)\neq 0$ and $\Nu_{h_2}(\ul^2)\neq 0$.
  Then
  \begin{multline*}
    H^1(\Mgrb{h_1}{n_1}{r}\times\Mgrb{h_2}{n_2}{r},\ad\Nu_{h_1}(\ul^1)\boxtimes\ad\Nu_{h_2}(\ul^2))\\
    \simeq H^1(\Mgrb{h_1}{n_1}{r},\ad\Nu_{h_1}(\ul^1))\oplus H^1(\Mgrb{h_2}{n_2}{r},\ad\Nu_{h_2}(\ul^2)).
  \end{multline*}
\end{lemma}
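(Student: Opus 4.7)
The plan is to reduce the claim to the Künneth formula combined with Schur's lemma applied to each adjoint factor. I will first compute the $H^0$ of $\ad\Nu_{h_i}(\ul^i)$ on $\Mgrb{h_i}{n_i}{r}$, and then apply the Künneth formula in degree $1$ for a product of Deligne-Mumford stacks with local coefficients.

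The first step is to observe that $H^0(\Mgrb{h_i}{n_i}{r},\ad\Nu_{h_i}(\ul^i))$ is, by definition, the space of invariants of the monodromy representation on $\End(\Nu_{h_i}(\ul^i))=\Nu_{h_i}(\ul^i)\otimes\Nu_{h_i}(\ul^i)^*$. Since $\Nu_{h_i}(\ul^i)\neq 0$, the irreducibility assumption (\Cref{propositionirreducibility}) says this representation is irreducible (the center of the canonical central extension acts by a scalar, and is therefore killed by taking adjoints, so irreducibility as a projective representation suffices to apply Schur at the level of $\ad\Nu$). By Schur's lemma, the invariant subspace is one-dimensional, spanned by the identity, so
\[H^0(\Mgrb{h_i}{n_i}{r},\ad\Nu_{h_i}(\ul^i))\simeq \C.\]

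The second step is to apply the Künneth formula with local coefficients. For two smooth Deligne-Mumford stacks $X$ and $Y$ (with finite CW-type classifying spaces) and finite-dimensional local systems $A$ on $X$, $B$ on $Y$, one has
\[H^1(X\times Y, A\boxtimes B)\simeq \bigl(H^0(X,A)\otimes H^1(Y,B)\bigr)\oplus \bigl(H^1(X,A)\otimes H^0(Y,B)\bigr),\]
since no $H^{-1}$ appears. Applying this to $X=\Mgrb{h_1}{n_1}{r}$, $Y=\Mgrb{h_2}{n_2}{r}$, $A=\ad\Nu_{h_1}(\ul^1)$, $B=\ad\Nu_{h_2}(\ul^2)$, and combining with the $H^0$ computation above, yields
\[H^1(X\times Y,\ad\Nu_{h_1}(\ul^1)\boxtimes\ad\Nu_{h_2}(\ul^2))\simeq H^1(X,\ad\Nu_{h_1}(\ul^1))\oplus H^1(Y,\ad\Nu_{h_2}(\ul^2)),\]
which is precisely the assertion.

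The only subtle point is to ensure the Künneth formula in the form stated applies to products of the twisted moduli stacks $\Mgrb{h}{n}{r}$ with coefficients in a local system; this follows from standard Künneth for cohomology of topological spaces (applied to classifying spaces of the fundamental groupoids) with local coefficients, since the relevant coefficient rings here are fields and everything is finite-dimensional. Once one takes this as standard, the remaining content of the lemma is entirely contained in the Schur-lemma calculation of $H^0$, which is where the irreducibility assumption is used.
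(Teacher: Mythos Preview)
Your proof is correct and follows essentially the same approach as the paper: apply the K\"unneth formula in degree~$1$ to obtain the $H^1\otimes H^0 \oplus H^0\otimes H^1$ decomposition, and then use the irreducibility assumption (via Schur's lemma) to identify each $H^0(\Mgrb{h_i}{n_i}{r},\ad\Nu_{h_i}(\ul^i))$ with $\C$. The paper's proof is terser, deferring the K\"unneth step to the discussion preceding the lemma, but the content is identical.
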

\begin{proof}
  The result follows from the discussion above and the fact that the irreducibility assumption implies that, whenever $\Nu_h(\ul)\neq 0$,
  all inclusion $\C\subset \ad\Nu_h(\ul)$ induce isomorphisms on $H^0$.
\end{proof}

Applying \Cref{lemmacenterkills,lemmaKunneth} to the discussion above, we get the following.

\begin{corollary}[{\cite[Cor. 6.5]{godfardRigidityFibonacciRepresentations2025}}]\label{corollarycenterkillsandKunneth}
  Assume $\Nu$ is a modular functor satisfying \textbf{(II)} of \Cref{propositionSOmodularfunctor} and irreducibility (\Cref{propositionirreducibility}).
  Then the maps in conditions \textbf{(G-sep)} and \textbf{(G-nonsep)} of \Cref{definitioncompatiblecollection} are isomorphisms.
  More precisely, in the context of \textbf{(G-sep)}, there is an isomorphism
  \begin{multline*}
    H^1(\Mgrb{h_1}{n_1+1}{r}\times\Mgrbt{h_2}{n_2+1}{r}{s};q^*\ad \Nu_{h_1+h_2}(\lambda_1,\dotsc,\lambda_n))\\
   \simeq\bigoplus_\mu H^1(\Mgrb{h_1}{n_1+1}{r};\ad \Nu_{h_1}(\lambda_1,\dotsc,\mu))\oplus H^1(\Mgrb{h_2}{n_2+1}{r};\ad \Nu_{h_2}(\lambda_{n_1+1},\dotsc,\mu))
  \end{multline*}
  where the direct sum is restricted to the $\mu$ with non-zero corresponding summand in the gluing isomorphism.
  In the context of \textbf{(G-nonsep)}, there is an isomorphism
  \begin{align*}
    H^1&(\Mgrb{h-1}{n+2}{r};p^*\ad \Nu_{h}(\lambda_1,\dotsc,\lambda_n))\\
    &\simeq\bigoplus_\mu H^1(\Mgrb{h-1}{n+2}{r};\ad \Nu_{h-1}(\lambda_1,\dotsc,\lambda_n,\mu,\mu))
  \end{align*}
  where the direct sum is restricted to the $\mu$ with non-zero corresponding summand in the gluing isomorphism.
\end{corollary}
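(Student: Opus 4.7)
The plan is a direct application of Lemmas \ref{lemmacenterkills} and \ref{lemmaKunneth} to the explicit decompositions of $p^{*}\ad\Nu$ and $q^{*}\ad\Nu$ set up in the discussion immediately above the statement. Informally, the gluing axiom of $\Nu$, applied to both $\Nu$ and its dual, produces double sums indexed by pairs of colors $(\mu_1,\mu_2)$; the two lemmas kill exactly the off-diagonal pairs and flatten the diagonal Künneth terms into a plain direct sum of $H^{1}$'s. In both cases the resulting isomorphism is then identified with the natural map appearing in \Cref{definitioncompatiblecollection}.

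For \textbf{(G-nonsep)}, I would first apply the gluing isomorphism twice to obtain
\[
p^{*}\ad\Nu_{h}(\ul) \;\simeq\; \bigoplus_{\mu_1,\mu_2}\Nu_{h-1}(\ul,\mu_1,\mu_1)\otimes\Nu_{h-1}(\ul,\mu_2,\mu_2)^{*}.
\]
When $\mu_1=\mu_2=\mu$, the summand is $\ad\Nu_{h-1}(\ul,\mu,\mu)$. When $\mu_1\neq\mu_2$, the two colorings $(\ul,\mu_1,\mu_1)$ and $(\ul,\mu_2,\mu_2)$ disagree, so Lemma \ref{lemmacenterkills} (via property \textbf{(II)}) forces the $H^{1}$ of that summand to vanish. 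Passing to $H^{1}$ term by term produces the claimed direct-sum isomorphism and identifies it with the map of \textbf{(G-nonsep)} built from the diagonal inclusions $\ad\Nu_{h-1}(\ul,\mu,\mu)\hookrightarrow p^{*}\ad\Nu_{h}(\ul)$.

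For \textbf{(G-sep)}, I would decompose
\[
q^{*}\ad\Nu_{h_1+h_2}(\ul) \;\simeq\; \bigoplus_{\mu_1,\mu_2}\bigl(\Nu_{h_1}(\dotsc,\mu_1)\otimes\Nu_{h_1}(\dotsc,\mu_2)^{*}\bigr)\boxtimes\bigl(\Nu_{h_2}(\dotsc,\mu_1)\otimes\Nu_{h_2}(\dotsc,\mu_2)^{*}\bigr).
\]
For $\mu_1\neq\mu_2$, Lemma \ref{lemmacenterkills} applied separately to each box factor kills its $H^{1}$, so the Künneth formula forces the entire $(\mu_1,\mu_2)$ summand to contribute $0$ to $H^{1}$ of the product. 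For $\mu_1=\mu_2=\mu$, the summand is $\ad\Nu_{h_1}(\dotsc,\mu)\boxtimes\ad\Nu_{h_2}(\dotsc,\mu)$; irreducibility makes $H^{0}$ of each adjoint equal to $\C$, so Lemma \ref{lemmaKunneth} collapses the Künneth expression for $H^{1}$ of this box product to $H^{1}(\ad\Nu_{h_1}(\dotsc,\mu))\oplus H^{1}(\ad\Nu_{h_2}(\dotsc,\mu))$. Summing over the relevant $\mu$ gives the stated isomorphism, and matches the Künneth map of \textbf{(G-sep)}.

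The main conceptual content is entirely packaged into the two input lemmas: property \textbf{(II)} kills off-diagonal summands, and irreducibility trivializes the $H^{0}$ factors appearing in Künneth. Once these are available, the corollary itself is a careful bookkeeping of the gluing decomposition of $\ad\Nu=\Nu\otimes\Nu^{*}$, with no genuinely hard step.
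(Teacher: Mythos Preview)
Your proposal is correct and follows exactly the approach the paper takes: the paper's proof is the single line ``Applying \Cref{lemmacenterkills,lemmaKunneth} to the discussion above, we get the following'', where ``the discussion above'' is precisely the double-sum decomposition of $p^{*}\ad\Nu$ and $q^{*}\ad\Nu$ that you spell out. Your write-up is simply a more explicit version of what the paper leaves implicit.
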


This Corollary is the reason why we restrict ourselves to modular functors satisfying \textbf{(II)} and irreducibility:
given a gluing map $q$ as in \Cref{definitioncompatiblecollection},
it allows to define the deformation classes $\psi_{h_1}(\lambda_1,\dotsc,\mu)$ and $\psi_{h_2}(\lambda_{n_1+1},\dotsc,\mu)$ for all $\mu$
(with non-zero summand in the gluing isomorphism) from any class $\psi_{h_1+h_2}(\ul)\in H^1(\Mgrb{h_1+h_2}{n_1+n_2}{r},\ad\Nu_{h_1+h_2}(\ul))$.
Similarly given a map $p$ as in \Cref{definitioncompatiblecollection}, it allows to define $\psi_h(\lambda_1,\dotsc,\lambda_n,\mu,\mu)$
for $\Nu_h(\lambda_1,\dotsc,\lambda_n,\mu,\mu)\neq 0$ from $\psi_h(\lambda_1,\dotsc,\lambda_n)$.

Starting with the class $\psi\in H^1(\Mgrb{g}{}{r},\ad\Nu_g)$,
we shall construct in this manner, through successive restrictions,
a collection of compatible deformation classes $\psi_h(\ul)$ of the $I_{g'}^\Nu$-truncation of $\Nu$.
The need to restrict to $I_{g'}^\Nu$ rather than, for example, to the larger $I_g^\Nu$, comes from the fact that a local system $\Nu_h(\ul)$
may appear in different ways through sequences of successive restrictions along gluing isomorphisms,
i.e. there may be several possible embeddings $(\Sigma, G)$ of $(S_h^n,\ul)$ into $S_g$ (\Cref{definitionembeddable}).
However, when considering surfaces embeddable in $S_{g'}$ with $g\geq 2g'-1$,
and building the deformations $\psi_{h}(\ul)$ from a class $\psi_{g'}\in H^1(\Mgrb{g'}{}{r},\ad\Nu_{g'})$
induced by $\psi\in H^1(\Mgrb{g}{}{r},\ad\Nu_g)$, we can show that $\psi_{h}(\ul)$ is independent of the sequences of gluing isomorphisms used to define it.

The rest of this subsection is devoted to the construction of the $\psi_{h}(\ul)$ (\Cref{definitiondeformationclasses1}),
to the proof that they do not depend on the choices made (\Cref{corollaryindependenceofembedding}), and
the proof that they satisfy the conditions of a collection of compatible deformation classes (\Cref{propositiondeformationisMF}).

Let us first define the deformation $\psi_{g'}$. For that, we need that $H^1$ is preserved by vacuum isomorphisms.

\begin{proposition}[{\cite[Lem. 7.1]{godfardRigidityFibonacciRepresentations2025}}]\label{propositionvacuum}
  For any $h,n\geq 0$ with $2h-2+n>0$, $\ul\in\Lambda^n$, and $m\geq 0$, vacuum isomorphisms induce an isomorphism
  $$H^1(\Mgrb{h}{n+m}{r}, \ad\Nu_h(\ul,0,\dotsc,0))\simeq H^1(\Mgrb{h}{n}{r}, \ad\Nu_h(\ul)).$$
\end{proposition}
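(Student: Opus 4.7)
The plan is to reduce the claim to the case $m = 1$ by iteration, and then analyze the pullback along a single forgetful map via the Leray spectral sequence; the main content will be a vanishing for a higher direct image.

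First, I would reduce to $m = 1$: the composite forgetful map $\Mgrb{h}{n+m}{r}\to\Mgrb{h}{n}{r}$ factors as $m$ maps each forgetting the last marked point, and the vacuum isomorphisms of \Cref{definitiongeometricmodularfunctor} \textbf{(N)} compose accordingly, so it suffices to treat $m = 1$.

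For $m = 1$, set $f\colon E := \Mgrb{h}{n+1}{r}\to\Mgrb{h}{n}{r} =: B$. The vacuum axiom gives a canonical isomorphism $\ad\Nu_h(\ul,0) \simeq f^*\ad\Nu_h(\ul)$, so the induced map is the pullback $f^*$. The low-degree five-term sequence of the Leray spectral sequence reads
$$0\to H^1(B, \ad\Nu_h(\ul)) \xrightarrow{f^*} H^1(E, f^*\ad\Nu_h(\ul)) \to H^0(B, R^1 f_* f^*\ad\Nu_h(\ul)) \to H^2(B, \ad\Nu_h(\ul)),$$
so $f^*$ is automatically injective, and the isomorphism on $H^1$ reduces to the vanishing $H^0(B, R^1 f_* f^*\ad\Nu_h(\ul)) = 0$. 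By the projection formula and connectedness of the fibers of $f$, this identifies with $H^0(B, R^1 f_*\underline{\C}\otimes \ad\Nu_h(\ul))$, where $R^1 f_*\underline{\C}$ has fibers $H^1(F, \C)$ for $F$ the generic fiber (an open Riemann surface of genus $h$ with $n$ punctures), carrying the standard monodromy action of $\Modl{r}{S_h^n}$ extending the symplectic representation and a permutation action on punctures.

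The hard part will be this vanishing, which amounts to showing that no nonzero $\Modl{r}{S_h^n}$-equivariant homomorphism $H_1(F,\C)\to\ad\Nu_h(\ul)$ exists. This should follow by combining the irreducibility of $\Nu_h(\ul)$ (\Cref{propositionirreducibility}) with properties \textbf{(I)} and \textbf{(II)} of \Cref{propositionSOmodularfunctor} to rule out every irreducible constituent of $H_1(F,\C)$ from appearing as a subrepresentation of $\ad\Nu_h(\ul)$; this is essentially the argument of \cite[Lem. 7.1]{godfardRigidityFibonacciRepresentations2025}, adapted to the present modular functors. One also needs to handle the contribution from the compactification, where the fibers of $f$ acquire nodes, but this can be controlled by pushing forward from the interior using purity of the modular functor local systems.
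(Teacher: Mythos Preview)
Your overall strategy via a five-term sequence is right, but the key vanishing step has a genuine gap, and the geometric route is more complicated than needed. The paper (via the cited \cite[Lem.~7.1]{godfardRigidityFibonacciRepresentations2025}, cf.\ also the proof of \Cref{corollaryconjecturesreducetothreethree}) passes to group cohomology using \Cref{propositionnonabHurwitz} and then applies the Birman exact sequence and inflation--restriction. Writing $G=\Modl{r}{S_h^{n+1}}$, $Q=\Modl{r}{S_h^n}$, $N=\ker(G\to Q)$, one must show $H^1(N,M)^Q=0$ for $M=\ad\Nu_h(\ul)$. Since $N$ acts trivially on $M$, this is $\mathrm{Hom}_Q(N^{\mathrm{ab}}\otimes\C,M)$. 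The crucial point, which you miss, is that $N^{\mathrm{ab}}\otimes\C=0$: the Birman kernel is generated by the boundary twist $T_{\partial_{n+1}}$ and point-pushes along simple loops, and every point-push is $T_aT_b^{-1}$ with $a,b$ disjoint curves in $S_h^{n+1}$, hence $(T_aT_b^{-1})^r=T_a^rT_b^{-r}=1$ in $G$. So every generator of $N$ has finite order, $N^{\mathrm{ab}}$ is finite, and the vanishing holds for \emph{any} $M$. No use of irreducibility or of properties \textbf{(I)}, \textbf{(II)} is made or needed.

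Your proposed argument for the vanishing, comparing constituents of $H_1(F,\C)$ with those of $\ad\Nu_h(\ul)$ via irreducibility and \textbf{(I)}, \textbf{(II)}, does not work as stated: those hypotheses give no handle on whether a symplectic-type summand sits inside $\ad\Nu_h(\ul)$. The geometric setup also has problems you only gesture at: $f$ is not a fibration (fibres over boundary strata are nodal), so $R^1f_*\underline{\C}$ is merely constructible rather than locally constant, and its generic fibre is $H^1$ of a \emph{compact} curve rather than an open one; the ``purity'' fix is not made precise. Passing to group cohomology sidesteps all of this, since in the level-$r$ quotient the Birman kernel simply has finite abelianisation.
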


\begin{definition}\label{definitionpsigprime}
  Let $p:\Mgrb{g'}{2(g-g')}{r}\ra\Mgrb{g}{}{r}$
  be the map obtained by gluing marked points $2i$ and $2i-1$ for $1\leq i\leq g-g'$.
  Then this map together with \Cref{propositionvacuum} induce a cohomology map
  $$H^1(\Mgrb{g}{}{r},\ad\Nu_g)\lra H^1(\Mgrb{g'}{2(g-g')}{r},\ad\Nu_{g'}(0,\dotsc,0))\simeq H^1(\Mgrb{g'}{}{r},\ad\Nu_{g'}).$$
  The arrow above is induced by the projection $p^*\ad\Nu_{g}\ra \ad\Nu_{g'}(0,\dotsc,0)$ on a direct summand in the gluing isomorphism.
  We define $\psi_{g'}$ to be the image of $\psi$ under this map.
\end{definition}

\begin{remark}
  Note that any map $\Mgrb{g'}{2(g-g')}{r}\ra\Mgrb{g}{}{r}$ given by the gluing of pairs of marked points will give the same
  $\psi_{g'}$. Indeed $H^1(\Mgrb{g'}{2(g-g')}{r},\ad\Nu_{g'}(0,\dotsc,0))$ is invariant under permutation of the marked points,
  as it is isomorphic to $H^1(\Mgrb{g'}{}{r},\ad\Nu_{g'})$.
\end{remark}

\begin{definition}\label{definitiondeformationclasses1}
  Let $(h,n,\ul)$ be in $I_{g'}^\Nu$ and $e=(\Sigma,G)$ be an embedding (\Cref{definitionembeddable}).
  Let $\Sigma=\bigsqcup_i (S_{k_i}^{m_i},\umu_i)$ be the decomposition into connected components.
  Let us denote by $\Mgrb{\uk}{\um}{r}$ the product of the $\Mgrb{k_i}{m_i}{r}$.
  The gluing $G$ induces a map
  $$i_e:\Mgrb{h}{n}{r}\times\Mgrb{\uk}{\um}{r}\lra \Mgrb{g'}{}{r}.$$
  Denote by $\Nu_{\uk}(\umu)$ the local system over $\Mgrb{\uk}{\um}{r}$
  that is the external tensor product of the $\Nu_{k_i}(m_i)$. We then have maps
  \[\begin{array}{lcl}
          H^1(\Mgrb{g'}{}{r},\ad\Nu_{g'}) & \lra   & H^1(\Mgrb{h}{n}{r}\times\Mgrb{\uk}{\um}{r},\ad\Nu_h(\ul)\boxtimes\ad\Nu_{\uk}(\umu)) \\
                                          & \lra   & H^1(\Mgrb{h}{n}{r},\ad\Nu_h(\ul))\otimes H^0(\Mgrb{\uk}{\um}{r},\ad\Nu_{\uk}(\umu)) \\
                                          & \lra   & H^1(\Mgrb{h}{n}{r},\ad\Nu_h(\ul))
  \end{array}\]
  whose composition we will denote $i_e^*$.
  The second arrow above is the projection on a factor in the Künneth isomorphism,
  and the third arrow is $0$ if $\Nu_h(\ul)=0$, and is otherwise induced
  by the canonical isomorphism $\C\simeq H^0(\Mgrb{\uk}{\um}{r},\ad\Nu_{\uk}(\umu))$
  provided by irreducibility and $\Nu_{k_i}(\mu_i)\neq 0$ (the latter is an assumption in the definition of $e$ when $\Nu_h(\ul)\neq 0$).
  We then define $\psi_h^e(\ul)\in H^1(\Mgrb{h}{n}{r},\ad\Nu_h(\ul))$ to be $i_e^*\psi_{g'}$.
\end{definition}

\begin{proposition}\label{propositionembeddinginSg}
  If $(h,n,\ul)$ is in $I_{g'}^\Nu$, then there exists an embedding $(M,G)$ of $(S_h^n,\ul)$ in $S_g$
  such that $M$ is connected. 
\end{proposition}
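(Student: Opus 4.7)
My plan is to build $M$ by modifying the given embedding $(\Sigma,G_0)$ of $(S_h^n,\ul)$ into $S_{g'}$: in $S_g$, which contains $S_{g'}$ as a subsurface plus $g-g'$ extra handles, I will distribute those handles so that $k-1$ of them ``tube'' together the $k$ connected components of $\Sigma$ while the remaining $g-g'+1-k$ are attached internally to increase the genus. First, I reduce to the case where $\Sigma$ has exactly $n$ boundary circles, all glued to $\partial S_h^n$. If $\Sigma$ has internally glued pairs, I perform those gluings: the conditions of \Cref{definitionembeddable} are preserved since each $\chi(\Sigma_i)\leq 0$ rules out the creation of $S_0^0$ or $S_0^1$ components, connectedness of $S_{g'}$ rules out $S_1^0$ components (which would be disjoint from $S_h^n$), and the gluing axiom exhibits $\Nu(\Sigma)$ as a direct summand of $\Nu$ on the reduced surface.

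Writing $\Sigma=\bigsqcup_{i=1}^k \Sigma_i$ after this reduction, one has $b_i\geq 1$ for each $i$ and $\sum b_i=n$, so in particular $k\leq n$. The essential step is the sharper bound $k\leq g'-h$: splitting the components into $k_c$ cylinders $S_0^2$ and $k'=k-k_c$ non-cylinders (for which $\chi_i\leq -1$), the identity $\sum_{\text{non-cyl}}b_i=n-2k_c$ together with $\sum_{\text{non-cyl}}b_i\geq k'$ gives $k_c\leq (n-k')/2$, while $k'\leq -\chi(\Sigma)=2g'-2h-n$, and adding these bounds yields $k\leq (n+k')/2\leq g'-h$. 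Combined with $g\geq 2g'-1$ this produces $k\leq g-g'+1$, the inequality that makes the construction possible.

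I then realize $M$ explicitly. Choosing a spanning tree on the $k$ components, I tube the two endpoints of each of its $k-1$ edges (remove an interior disk from each and glue a cylinder); this produces a connected surface $M_0$ with the same $n$ colored boundary circles and the same total genus $g_\Sigma=g'+k-h-n$ as $\Sigma$ (tubing preserves total genus). Since a connected complement of $S_h^n$ in $S_g$ must have genus $g+1-h-n$ (from $\chi(S_g)=\chi(S_h^n)+\chi(M)$), I attach $(g+1-h-n)-g_\Sigma=g-g'+1-k\geq 0$ additional handles to $M_0$, producing the desired $M$. The gluing $G$ is the natural extension of $G_0$ by the identifications coming from the tubes and handles. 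Conditions (1) and (2) of \Cref{definitionembeddable} are then automatic ($M$ is connected and the only potentially forbidden case $M\in\{S_0^0,S_0^1,S_1^0\}$ would require $n\leq 1$, which is ruled out using $h\leq g'-n/2$ from $\chi(\Sigma)\leq 0$ together with $g\geq 2g'-1$), and condition (3) follows because the vacuum axiom lets each tubing and handle-attachment preserve $\Nu\neq 0$ by the $\mu=0$ summand of the associated gluing decomposition.

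The main obstacle is establishing the counting bound $k\leq g'-h$; everything else is routine surface topology once that inequality is secured.
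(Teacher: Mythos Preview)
Your proof is correct and follows essentially the same approach as the paper: reduce to the case where the complement $\Sigma$ has no internally glued boundary pairs, bound the number $k$ of its components (the paper obtains $a+b\leq g'$ via the same Euler-characteristic count, which is the weaker form of your $k\leq g'-h$, but either bound suffices), and then connect the components using the $g-g'\geq k-1$ extra handles available in passing from $S_{g'}$ to $S_g$. Your write-up is more explicit than the paper's in verifying conditions (2) and (3) of \Cref{definitionembeddable} for the resulting $M$, but the underlying construction is the same.
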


We will call such an embedding a \emph{connected embedding}.

\begin{proof}
  Let $(\Sigma,G')$ be an embedding of $(S_h^n,\ul)$ in $S_{g'}$. Replacing $\Sigma$ if necessary,
  we may assume that $G'$ does not glue any pairs of boundary components of $\Sigma$ together.
  Now every connected component of $\Sigma$ is either an annulus or has negative Euler characteristic.
  Let $a$ be the number of annuli, and $b$ the number of other connected components.
  Then
  \begin{align*}
    2g'-2 &=    - \chi(S_{g'})= -\chi(S_h^n) - \chi(\Sigma) \\
          &\geq -\chi(S_h^n) + b = 2h-2+n+ b
          \geq -2 +n + b.
  \end{align*}
  So that $2g'\geq n+b$. Now, as each boundary component of $\Sigma$ is glued to one of $S_h^n$ by $G'$,
  we have $n\geq 2a+b$. Hence $2g'\geq 2(a+b)$. As $g\geq 2g'-1$, we have $g-g'\geq a+b-1$.
  Now, to construct a possible $(M,G)$, one may add $g-g'$ boundary components colored with $0$ to $\Sigma$ to obtain a colored surface $\tilde{\Sigma}$,
  and glue these extra boundary components into pairs such that the result $M$ is connected.
  This is always possible as any $a+b$ points can be made into a connected graph using $a+b-1$ edges.
\end{proof}

\begin{definition}\label{definitiondeformationclasses2}
  Let $(h,n,\ul)$ be in $I_{g'}^\Nu$ and $f=(M,G)$ be a connected embedding of $(S_h^{n},\ul)$ into $S_g$.
  Then for $M=(S_k^m,\umu)$, the gluing $G$ induces a map
  $$i_f:\Mgrb{h}{n}{r}\times\Mgrb{k}{m}{r}\ra \Mgrb{g}{}{r}.$$
  This map in turn induces a morphism\footnote{As in \Cref{definitiondeformationclasses1}, if $\Nu_h(\ul)=0$, this is the $0$ morphism.}
  \[\begin{array}{lcl}
    H^1(\Mgrb{g}{}{r},\ad\Nu_{g}) & \lra   & H^1(\Mgrb{h}{n}{r}\times\Mgrb{k}{m}{r},\ad\Nu_h(\ul)\boxtimes\ad\Nu_{k}(\umu)) \\
                                  & \lra   & H^1(\Mgrb{h}{n}{r},\ad\Nu_h(\ul))\otimes H^0(\Mgrb{k}{m}{r},\ad\Nu_{k}(\umu)) \\
                                  & \lra   & H^1(\Mgrb{h}{n}{r},\ad\Nu_h(\ul))
  \end{array}\]
  that we will denote $i_f^*$.
  We then define $\psi_h^f(\ul)\in H^1(\Mgrb{h}{n}{r},\ad\Nu_h(\ul))$ to be $i_f^*\psi$.
\end{definition}

\begin{proposition}\label{propositionindependenceofembedding}
  Let $(h,n,\ul)$ be in $I_{g'}^\Nu$.
  Then for any connected embedding $f$ of $(S_h^{n},\ul)$ in $S_g$
  and any embedding $e$ of $(S_h^n,\ul)$ in $S_{g'}$, we have $\psi_h^e(\ul)=\psi_h^f(\ul)$.
\end{proposition}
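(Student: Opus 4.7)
The plan is to reduce both $\psi_h^e(\ul)$ and $\psi_h^f(\ul)$ to a common pullback of $\psi$ along an embedding of $(S_h^n,\ul)$ in $S_g$, and then establish independence of the resulting class on the embedding. I would proceed in two steps: first, extend $e=(\Sigma,G)$ to an embedding $\tilde{e}$ of $(S_h^n,\ul)$ in $S_g$ and show $\psi_h^e(\ul)=\psi_h^{\tilde{e}}(\ul)$; second, show that $\psi_h^{\tilde{e}'}(\ul)$ is independent of the choice of embedding $\tilde{e}'$ of $(S_h^n,\ul)$ in $S_g$, which yields $\psi_h^{\tilde{e}}(\ul)=\psi_h^f(\ul)$.

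For the first step, attach the $(g-g')$ handles entirely within $\Sigma$: this adds $2(g-g')$ new boundary components colored by $0$ to components of $\Sigma$, paired via non-separating gluings. At the level of moduli spaces, this produces a lift $\widetilde{i_e}$ of $i_e$ such that $i_{\tilde{e}}=p\circ\widetilde{i_e}$, where $p$ is the gluing map of \Cref{definitionpsigprime}. Using the gluing decomposition of $p^*\ad\Nu_g$, the vacuum isomorphism of \Cref{propositionvacuum}, and the Künneth projection onto the $\ad\Nu_h(\ul)$-factor, one verifies that both $\psi_h^e(\ul)$ — defined by pulling back $\psi_{g'}$ via $i_e$ — and $\psi_h^{\tilde{e}}(\ul)$ — defined by pulling back $\psi$ via $i_{\tilde{e}}$ — arise as the same projection of $p^*\psi$ to $H^1(\Mgrb{h}{n}{r},\ad\Nu_h(\ul))$.

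For the second step, I would invoke a change-of-coordinates principle: any two embeddings of $(S_h^n,\ul)$ in $S_g$ whose complements are homeomorphic as colored surfaces are related by an element of $\Modu{S_g}$, and since $\psi$ is a cohomology class on $\Mgrb{g}{}{r}$, the pullbacks along the corresponding $i_{\tilde{e}'}$ agree. To bridge $\tilde{e}$ (possibly disconnected) with the connected embedding $f$, I would progressively rearrange the handle attachment pattern on $\Sigma$ to connect its components; each such modification can be realised on moduli by a mapping class of $S_g$, and the Euler characteristic bound $g\geq 2g'-1$ guarantees (as in the proof of \Cref{propositionembeddinginSg}) that enough handles are available to connect the pieces. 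By induction on the number of components of the residue, $\psi_h^{\tilde{e}}(\ul)=\psi_h^f(\ul)$.

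The main obstacle is the careful bookkeeping in the first step: identifying the two pullback classes requires composing the gluing decomposition of $p^*\ad\Nu_g$ into summands indexed by tuples of colors, projection to the $(0,\ldots,0)$-summand, the vacuum isomorphism of \Cref{propositionvacuum}, and the $H^0\simeq\C$ normalization provided by irreducibility (\Cref{propositionirreducibility}), and checking that these canonical isomorphisms are compatible with the Künneth splitting induced by $\widetilde{i_e}$ on the product moduli space. The second step is conceptually cleaner, though it demands some care when the residue contains annuli or small-Euler-characteristic components, which must be handled by repeated use of the gluing axiom \textbf{(G-sep)} together with \Cref{corollarycenterkillsandKunneth}.
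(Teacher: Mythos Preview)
Your two-step plan mirrors the paper's argument, and your step 1 is essentially what the paper does with its commutative diagram relating $e$, $\tilde{\Sigma}$, and $M$. The bookkeeping you flag is indeed the main technical content, and it is handled exactly as you describe.

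There is, however, a genuine gap in your step 2. You claim that each handle rearrangement ``can be realised on moduli by a mapping class of $S_g$'', but this is false whenever the rearrangement changes the homeomorphism type of the complement (for instance, reduces the number of its connected components). A mapping class of $S_g$ carries one embedding of $S_h^n$ to another only if their complements are homeomorphic as colored surfaces; moving a handle end from one component to another destroys this. So your induction on the number of components of the residue does not go through as written.

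The paper avoids this by arranging, already in step 1, that the extension $\tilde{e}$ is a \emph{connected} embedding: the Euler characteristic count of \Cref{propositionembeddinginSg} shows there are enough handles ($g-g'\geq a+b-1$) to connect all components of $\Sigma$. Once both $\tilde{e}$ and $f$ are connected embeddings whose complements have no self-gluing, the complements are forced to be homeomorphic (same Euler characteristic, same boundary, same coloring), and \emph{then} the change-of-coordinates argument applies directly. Your step 2 collapses to a single application of this uniqueness, with no induction needed.
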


\begin{proof}
  We may assume that $\Nu_h(\ul)\neq 0$ as otherwise there is nothing to prove.
  Let $e=(\Sigma,G')$ and $f=(M,G)$ be the embeddings. 
  We may and will assume that $G'$ does not glue boundary components of $\Sigma$ together.
  Indeed, if $\Sigma'$ is the surface obtained by gluing the pairs of boundary components of $\Sigma$ glued by $G'$,
  then $G'$ induces an embedding $e'=(\Sigma',G'')$ of $(S_h^n,\ul)$ in $S_{g'}$. It then follows from the definitions that
  $\psi_h^e(\ul)=\psi_h^{e'}(\ul)$. Similarly, we may assume without loss of generality that $G$ does not glue boundary components of $M$ together.

  Now, for any other connected embedding $f'=(M',G'')$ of $(S_h^{n},\ul)$ in $S_g$, such that $G''$ does not glue boundary components of $M'$ together,
  $M$ and $M'$ have the same Euler characteristic,
  the same number of boundary components, and the same coloring of these boundary components.
  So $f'$ and $f$ are the same gluing.
  Hence we may assume that $f=(M,G)$ is constructed as in the proof of \Cref{propositionembeddinginSg}.

  The procedure in that proof relating $e$ and $f$ via the surface $\tilde{\Sigma}$ can be summed up in the following diagram
  \[\begin{tikzcd}
    {(S_h^{n},\ul)\sqcup M} & {S_g} \\
    {(S_h^{n},\ul)\sqcup \tilde\Sigma} & {S_{g'}^{2(g-g')}} \\
    {(S_h^n,\ul)\sqcup \Sigma} & {S_{g'}}
    \arrow["f", from=1-1, to=1-2]
    \arrow[from=2-1, to=1-1]
    \arrow[from=2-1, to=2-2]
    \arrow[from=2-1, to=3-1]
    \arrow[from=2-2, to=1-2]
    \arrow[from=2-2, to=3-2]
    \arrow["e"', from=3-1, to=3-2]
  \end{tikzcd}\]
  where the horizontal arrows and the vertical arrows pointing up are gluings, and the vertical arrows pointing down are obtained
  by capping some boundary components with disks.

  Let $\Sigma=\bigsqcup_i (S_{k_i}^{m_i},\umu_i)$ and $\tilde{\Sigma}=\bigsqcup_i (S_{k_i}^{m_i+\epsilon_i},(\umu_i,0,\dotsc,0))$
  be decompositions into connected components. Let $(S_k^m,\unu)=M$.
  Let us denote by $\Mgrb{\uk}{\um}{r}$ the product of the $\Mgrb{k_i}{m_i}{r}$ and by $\Mgrb{\uk}{\um+\ueps}{r}$
  the product of the $\Mgrb{k_i}{m_i+\epsilon_i}{r}$. 
  Then the diagram above induces a diagram
  \[\begin{tikzcd}
    {\Mgrb{h}{n}{r}\times\Mgrb{k}{m}{r}} & {\Mgrb{g}{}{r}} \\
    {\Mgrb{h}{n}{r}\times \Mgrb{\uk}{\um+\ueps}{r}} & {\Mgrb{g'}{2(g-g')}{r}} \\
    {\Mgrb{h}{n}{r}\times\Mgrb{\uk}{\um}{r}} & {\Mgrb{g'}{}{r}}
    \arrow["{i_f}", from=1-1, to=1-2]
    \arrow[from=2-1, to=1-1]
    \arrow[from=2-1, to=2-2]
    \arrow[from=2-1, to=3-1]
    \arrow[from=2-2, to=1-2]
    \arrow[from=2-2, to=3-2]
    \arrow["{i_e}"', from=3-1, to=3-2]
  \end{tikzcd}\]
  of moduli spaces. Denote by $\Nu_{\uk}(\umu)$ the local system over $\Mgrbt{\uk}{\um}{r}{s}$
  that is the exterior tensor product of the $\Nu_{k_i}(\mu_i)$. Similarly for $\Nu_{\uk}(\umu,\uzero)$.
  This in turn implies the commutation of the diagram of deformation spaces
  \[\begin{tikzcd}
    \begin{array}{c} H^1(\Mgrb{h}{n}{r},\ad\Nu_h(\ul))\\ \otimes H^0(\Mgrb{k}{m}{r},\ad\Nu_{k}(\unu)) \end{array} & {H^1(\Mgrb{g}{}{r},\ad\Nu_g)} \\
    \begin{array}{c} H^1(\Mgrb{h}{n}{r},\ad\Nu_h(\ul))\\ \otimes H^0(\Mgrb{\uk}{\um+\ueps}{r},\ad\Nu_{\uk}(\umu,\uzero)) \end{array} & {H^1(\Mgrb{g'}{2(g-g')}{r},\ad\Nu_{g'}(0,\dotsc,0))} \\
    \begin{array}{c} H^1(\Mgrb{h}{n}{r},\ad\Nu_h(\ul))\\ \otimes H^0(\Mgrb{\uk}{\um}{r},\ad\Nu_{\uk}(\umu)) \end{array} & {H^1(\Mgrb{g'}{}{r},\ad\Nu_{g'}).}
    \arrow["\simeq", from=1-1, to=2-1]
    \arrow["{i_f^*}"', from=1-2, to=1-1]
    \arrow[from=1-2, to=2-2]
    \arrow[from=2-2, to=2-1]
    \arrow["\simeq"', from=3-1, to=2-1]
    \arrow["\simeq"', from=3-2, to=2-2]
    \arrow["{i_e^*}", from=3-2, to=3-1]
  \end{tikzcd}\]
  Note that all $H^0$ groups in the above diagram are in fact canonically isomorphic to $\C$ as $\Nu$ satisfies irreducibility (\Cref{propositionirreducibility}),
  and are written down for clarity.
  Notice that by definition, the map from the top right corner to the bottom
  right corner sends $\psi$ to $\psi_{g'}$. Hence $\psi_h^e(\ul)=\psi_h^f(\ul)$.
\end{proof}

\begin{corollary}\label{corollaryindependenceofembedding}
  Let $(h,n,\ul)$ be in $I_{g'}^\Nu$, then $\psi_h^e(\ul)$ is independent of the embedding $e$ of $(S_h^n,\ul)$ in $S_{g'}$.
\end{corollary}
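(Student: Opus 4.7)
The plan is to deduce this immediately from Proposition \ref{propositionindependenceofembedding} by using connected embeddings into $S_g$ as intermediaries. Since $(h,n,\ul)\in I_{g'}^\Nu$ and $g\geq 2g'-1$, Proposition \ref{propositionembeddinginSg} (which uses the Euler characteristic/boundary counting bound $g-g'\geq a+b-1$) guarantees the existence of at least one connected embedding $f$ of $(S_h^n,\ul)$ into $S_g$. This is the only nontrivial input we need beyond what is already proved.

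Given any two embeddings $e_1$ and $e_2$ of $(S_h^n,\ul)$ in $S_{g'}$, the plan is to fix such a connected embedding $f$ of $(S_h^n,\ul)$ in $S_g$ and apply Proposition \ref{propositionindependenceofembedding} twice, once to the pair $(e_1,f)$ and once to the pair $(e_2,f)$. This yields
\[
\psi_h^{e_1}(\ul) \;=\; \psi_h^{f}(\ul) \;=\; \psi_h^{e_2}(\ul),
\]
which is exactly the claim. Note that $f$ plays a purely auxiliary role: it depends on $g$, not on $g'$, but both $\psi_h^{e_i}(\ul)$ are defined purely in terms of the class $\psi_{g'}$ on $\Mgrb{g'}{}{r}$, and the proposition bridges between the two moduli descriptions via the common surface $\tilde{\Sigma}$ constructed in the proof of Proposition \ref{propositionembeddinginSg}.

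There is essentially no obstacle here: the substantive work has been done in Proposition \ref{propositionindependenceofembedding}, where the commutative diagram of moduli spaces and its translation into a commutative diagram of cohomology groups establish that pulling back $\psi$ (on $S_g$) and pulling back $\psi_{g'}$ (on $S_{g'}$) produce the same deformation class on $\Mgrb{h}{n}{r}$. The corollary is the formal consequence that any two embeddings in $S_{g'}$ give the same answer, obtained by transitivity through $f$. Consequently, from now on one may unambiguously write $\psi_h(\ul)$ for this common value, which is the notation needed in the subsequent sections to verify that the collection $(\psi_h(\ul))_{(h,n,\ul)\in I_{g'}^\Nu}$ forms a collection of compatible deformation classes in the sense of Definition \ref{definitioncompatiblecollection}.
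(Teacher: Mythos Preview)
Your proof is correct and follows exactly the same approach as the paper: pick a connected embedding $f$ into $S_g$ via Proposition \ref{propositionembeddinginSg}, then apply Proposition \ref{propositionindependenceofembedding} twice to obtain $\psi_h^{e_1}(\ul)=\psi_h^{f}(\ul)=\psi_h^{e_2}(\ul)$.
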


\begin{proof}
  By \Cref{propositionembeddinginSg}, there exists a connected embedding $f$ of $(S_h^n,\ul)$ in $S_{g}$. Then for any two embeddings $e$ and $e'$
  of $(S_h^n,\ul)$ in $S_{g'}$, by \Cref{propositionindependenceofembedding}, $\psi_h^e(\ul)=\psi_h^f(\ul)=\psi_h^{e'}(\ul)$.
\end{proof}

\begin{notation}
  For $(h,n,\ul)$ in $I_{g'}^\Nu$, we will denote by $\psi_h(\ul)\in H^1(\Mgrb{h}{n}{r},\ad\Nu_h(\ul))$
  the class $\psi_h^e(\ul)$, which is independent of the embedding $e$ by \Cref{corollaryindependenceofembedding}.
\end{notation}

\begin{proposition}\label{propositiondeformationisMF}
  The $\psi_h(\ul)\in H^1(\Mgrb{h}{n}{r},\ad\Nu_h(\ul))$ for $(h,n,\ul)$ in $I_{g'}^\Nu$ form a collection of compatible deformation
  classes for the $I_{g'}^\Nu$-truncation of $\Nu$.
\end{proposition}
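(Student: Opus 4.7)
The plan is to verify axioms \textbf{(G-sep)}, \textbf{(G-nonsep)}, \textbf{(N)} and \textbf{(Perm)} of \Cref{definitioncompatiblecollection} one by one for the collection $(\psi_h(\ul))_{(h,n,\ul)\in I_{g'}^\Nu}$. In each case the key principle is the freedom afforded by \Cref{corollaryindependenceofembedding}: since $\psi_h(\ul)$ is defined by pullback along \emph{any} embedding of $(S_h^n,\ul)$ in $S_{g'}$, I am free to choose an embedding adapted to the axiom under consideration. Compatibility then follows from naturality of the pullback together with the $H^1$-isomorphisms of \Cref{corollarycenterkillsandKunneth}.

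Axiom \textbf{(Perm)} is essentially tautological. Any embedding $e=(\Sigma,G)$ of $(S_h^n,\ul)$ in $S_{g'}$ is also an embedding of $(S_h^n,\sigma(\ul))$ in $S_{g'}$, after relabeling the matching of boundary components by $\sigma$. Naturality of the pullback through $i_e$ then forces $\sigma^*\psi_h(\sigma(\ul))=\psi_h(\ul)$. Axiom \textbf{(N)} requires no separate argument: by \Cref{remarkvacuumreducestogluing}, when the truncation set is $I_{g'}^\Nu$, the forgetful isomorphism arises as a special case of a separating gluing (gluing on a pair of pants colored $(\lambda,\lambda,0)$), so \textbf{(N)} is subsumed by \textbf{(G-sep)}.

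For \textbf{(G-sep)}, fix $(h_1+h_2,n_1+n_2,\ul)\in I_{g'}^\Nu$ and a separating gluing $q$ along the boundary of color $\mu$. The idea is to pick an embedding $e$ of the target $(S_{h_1+h_2}^{n_1+n_2},\ul)$ in $S_{g'}$ that itself factors through the gluing: concretely, take embeddings $e_1,e_2$ of $(S_{h_i}^{n_i+1},\ldots,\mu)$ in $S_{g'}$ for each summand $\mu$ appearing with non-zero coefficient in the gluing rule, and pre-compose with $q$ to produce $e$. (Existence of such $e_i$ uses property \textbf{(I)} of \Cref{propositionSOmodularfunctor} to guarantee that the complements to $S_{h_i}^{n_i+1}$ can be extended to non-zero surfaces.) The diagram
\[\begin{array}{ccc}
\Mgrb{h_1}{n_1+1}{r}\times\Mgrb{h_2}{n_2+1}{r}\times\Mgrb{\uk_1}{\um_1}{r}\times\Mgrb{\uk_2}{\um_2}{r} & \lra & \Mgrb{g'}{}{r} \\
\downarrow q\times\id & & \| \\
\Mgrb{h_1+h_2}{n_1+n_2}{r}\times\Mgrb{\uk}{\um}{r} & \lra & \Mgrb{g'}{}{r}
\end{array}\]
together with the Künneth and \Cref{corollarycenterkillsandKunneth} identifications shows that the pullback $q^*\psi_{h_1+h_2}(\ul)$ decomposes as $\bigoplus_\mu \psi_{h_1}(\ldots,\mu)\oplus \psi_{h_2}(\ldots,\mu)$, as required. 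Axiom \textbf{(G-nonsep)} is handled in exactly the same way, using embeddings that factor through the non-separating gluing $p$.

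The main obstacle is the combinatorial bookkeeping for \textbf{(G-sep)} and \textbf{(G-nonsep)}: one must verify that the embeddings obtained by pre-composition with gluings remain valid in the sense of \Cref{definitionembeddable} (no forbidden components and all relevant $\Nu$-values non-zero), and that the resulting pullback diagrams genuinely commute after identifying the target $H^1$'s via \Cref{corollarycenterkillsandKunneth}. This is where property \textbf{(I)} and irreducibility play their role, by ensuring that all auxiliary cohomology groups arising from the decomposition collapse to the expected direct summand.
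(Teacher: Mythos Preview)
Your overall strategy matches the paper's exactly: exploit \Cref{corollaryindependenceofembedding} to choose embeddings adapted to each axiom, then read off compatibility from naturality of pullback together with \Cref{corollarycenterkillsandKunneth}. Your treatment of \textbf{(Perm)} and \textbf{(N)} is correct and coincides with the paper's.

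However, your argument for \textbf{(G-sep)} is muddled in a way that matters. You attempt to build the embedding $e$ of the \emph{big} surface $(S_{h_1+h_2}^{n_1+n_2},\ul)$ from separate embeddings $e_1,e_2$ of the \emph{small} pieces, and you invoke property~\textbf{(I)} to justify their existence. This is backwards, and property~\textbf{(I)} is not used here at all. The paper's argument goes the other direction: start with a single embedding $e=(\Sigma,G)$ of the big surface (which exists since $(h_1+h_2,n_1+n_2,\ul)\in I_{g'}^\Nu$), and then observe that for each $\mu$ with non-zero summand, the small piece $(S_{h_1}^{n_1+1},\lambda_1,\dotsc,\lambda_{n_1},\mu)$ is automatically embedded in $S_{g'}$ with complement $\Sigma\sqcup(S_{h_2}^{n_2+1},\lambda_{n_1+1},\dotsc,\lambda_n,\mu)$. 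This gives the embedding $e_1$ for free, and the factorization
\[
\Mgrb{h_1}{n_1+1}{r}\times\Mgrb{h_2}{n_2+1}{r}\times\Mgrb{\uk}{\um}{r}\lra\Mgrb{h_1+h_2}{n_1+n_2}{r}\times\Mgrb{\uk}{\um}{r}\lra \Mgrb{g'}{}{r}
\]
(with the \emph{same} complement factor $\Mgrb{\uk}{\um}{r}$ on both sides, not your split $\Mgrb{\uk_1}{\um_1}{r}\times\Mgrb{\uk_2}{\um_2}{r}$) yields the required commutative square. Condition (3) of \Cref{definitionembeddable} for $e_1$ follows from condition (3) for $e$ together with the non-vanishing of the $\mu$-summand, with no extra hypothesis needed. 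For \textbf{(G-nonsep)} the paper proceeds analogously: the complement is enlarged by an annulus $(S_0^2,\mu,\mu)$.
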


\begin{proof}
  We need to check the conditions of \Cref{definitioncompatiblecollection}.
  Condition \textbf{(Perm)} follows from the definition of the $\psi_h(\ul)$. As mentioned in \Cref{remarkvacuumreducestogluing},
  condition \textbf{(N)} follows from \textbf{(G-sep)}. Hence we need only check conditions \textbf{(G-nonsep)} and \textbf{(G-sep)}.
  Note that thanks to \Cref{corollarycenterkillsandKunneth}, the inclusion maps used in these conditions are isomorphisms.
  
  Let $(h,n,\ul)$ be as in condition \textbf{(G-nonsep)} of \Cref{definitioncompatiblecollection},
  and $\mu\in \Lambda$. We may assume that $\Nu_h(\ul)\neq 0$ as otherwise there is nothing to prove.
  Let $e=(\Sigma,G)$ be an embedding of $(S_h^n,\ul)$ in $S_{g'}$.
  Then one gets an embedding $e'=(\Sigma\sqcup (S_0^2,\mu,\mu),G')$ of $(S_{h-1}^{n+2},(\ul,\mu,\mu))$ in $S_{g'}$
  by first gluing $(S_0^2,\mu,\mu)$ to $(S_{h-1}^{n+2},(\ul,\mu,\mu))$ and then applying $e$. As $e'$ factors through $e$,
  using the notation of \Cref{definitiondeformationclasses1} from the factorization
  $$\Mgrb{h-1}{n+2}{r}\times\Mgrb{\uk}{\um}{r}\times\Mgrb{0}{2}{r}\lra\Mgrb{h}{n}{r}\times\Mgrb{\uk}{\um}{r}\lra \Mgrb{g'}{}{r}$$
  we get a commutative diagram
  \[\begin{tikzcd}
    {H^1(\Mgrb{g'}{}{r},\ad\Nu_{g'})} & {H^1(\Mgrb{h}{n}{r},\ad\Nu_h(\ul))} \\
    {H^1(\Mgrb{h-1}{n+2}{r};\ad \Nu_{h-1}(\lambda_1,\dotsc,\lambda_n,\mu,\mu))} & {\bigoplus_\nu H^1(\Mgrb{h-1}{n+2}{r};\ad \Nu_{h-1}(\lambda_1,\dotsc,\lambda_n,\nu,\nu)).}
    \arrow["{i_e^*}", from=1-1, to=1-2]
    \arrow["{i_{e'}^*}"', from=1-1, to=2-1]
    \arrow["{p^*}", from=1-2, to=2-2]
    \arrow[from=2-2, to=2-1]
  \end{tikzcd}\]
  Hence the condition \textbf{(G-nonsep)} is satisfied.

  Let $h_1$, $n_1$, $h_2$, $n_2$ and $\ul$ be as in condition \textbf{(G-sep)} of \Cref{definitioncompatiblecollection} and $\mu\in\Lambda$
  with non-zero corresponding direct summand in the gluing isomorphism (if the summand is $0$, there is nothing to prove).
  Thanks to \Cref{corollarycenterkillsandKunneth} and symmetry of gluing,
  we need only check that the projection of $\psi_{h_1+h_2}(\ul)$ in
  $H^1(\Mgrb{h_1}{n_1+1}{r},\ad\Nu_{h_1}(\lambda_1,\dotsc,\lambda_{n_1},\mu))$
  is $\psi_{h_1}(\lambda_1,\dotsc,\lambda_{n_1},\mu)$. Let $e=(\Sigma,G)$ be an embedding of $(S_{h_1+h_2}^{n_1+n_2},(\ul))$
  in $S_{g'}$. Then by applying the gluing and then $e$,
  we get an embedding $e_1=(\Sigma\sqcup (S_{h_2}^{n_2+1},(\lambda_{n_1+1},\dotsc,\lambda_{n},\mu)),G')$ of
  $(S_{h_1}^{n_1+1},(\lambda_1,\dotsc,\lambda_{n_1},\mu))$ in $S_{g'}$.
  This yields a factorization
  $$\Mgrb{h_1}{n_1+1}{r}\times\Mgrb{h_2}{n_2+1}{r}\times\Mgrb{\uk}{\um}{r}\lra\Mgrb{h_1+h_2}{n_1+n_2}{r}\times\Mgrb{\uk}{\um}{r}\lra \Mgrb{g'}{}{r}$$
  and a commutative diagram
  \[\begin{tikzcd}
    {H^1(\Mgrb{g'}{}{r},\ad\Nu_{g'})} & {H^1(\Mgrb{h_1+h_2}{n_1+n_2}{r},\ad\Nu_{h_1+h_2}(\ul))} \\
    {        H^1(\Mgrb{h_1}{n_1+1}{r};\ad \Nu_{h_1}(\lambda_1,\dotsc,\mu))} & \begin{array}{c}         \bigoplus_\nu H^1(\Mgrb{h_1}{n_1+1}{r};\ad \Nu_{h_1}(\lambda_1,\dotsc,\nu))\\\oplus H^1(\Mgrb{h_2}{n_2+1}{r};\ad \Nu_{h_2}(\lambda_{n_1+1},\dotsc,\nu)). \end{array}
    \arrow["{i_e^*}", from=1-1, to=1-2]
    \arrow["{i_{e_1}^*}"', from=1-1, to=2-1]
    \arrow["{q^*}", from=1-2, to=2-2]
    \arrow[from=2-2, to=2-1]
  \end{tikzcd}\]
  Hence the condition \textbf{(G-sep)} is satisfied.
\end{proof}


\subsection{Lifting the deformation classes to a deformation of the modular functor using Hodge theory}\label{sublifting}


The aim of this section is to prove the following.

\begin{theorem}\label{theoremlifting}
  Let $I$ be a truncation set and
  let $\Nu$ be an $I$-truncated modular functor. Then any collection of compatible deformation classes for $\Nu$
  lifts to a deformation of $\Nu$.
\end{theorem}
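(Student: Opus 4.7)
The plan is to apply the canonical harmonic section of Proposition~\ref{theoremuniversalsectionofH1} to each class $\psi_h(\ul)$, and then to use the three functoriality properties of that section—compatibility with direct sums, tensor products, and pullbacks along algebraic morphisms—to promote the cohomological compatibilities of the $\psi_h(\ul)$ to the level of flat connections.

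Since each $\Mgrb{h}{n}{r}$ is a proper smooth Kähler DM stack and each $\ad\Nu_h(\ul)$ is a semisimple local system on it, Proposition~\ref{theoremuniversalsectionofH1} will produce canonical closed forms $\omega_h(\ul) \in Z^1_{dR}(\Mgrb{h}{n}{r}, \ad\Nu_h(\ul))$ representing the $\psi_h(\ul)$. I would then define $\tilde\Nu_h(\ul)$ to be the $\C[\eps]$-projective local system with underlying module $\Nu_h(\ul) \otimes \C[\eps]$ and flat connection $\nabla_h(\ul) + \eps\,\omega_h(\ul)$, which is a first-order deformation whose Kodaira--Spencer class is $\psi_h(\ul)$. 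The remaining work is to show that all structural isomorphisms of the $I$-truncated modular functor $\Nu$ extend to isomorphisms of $\C[\eps]$-local systems between the corresponding $\tilde\Nu_h(\ul)$.

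Each of the axioms \textbf{(Perm)}, \textbf{(N)}, \textbf{(G-nonsep)}, \textbf{(G-sep)} reduces to one or more of the three compatibilities of Proposition~\ref{theoremuniversalsectionofH1}. For \textbf{(Perm)} and \textbf{(N)}, the compatibility conditions on the $\psi_h(\ul)$ are literal pullback equalities, and pullback compatibility of the canonical section will yield matching identities for the $\omega_h(\ul)$, so twisting the flat connections gives the desired deformed isomorphisms. For \textbf{(G-nonsep)}, I would apply pullback and direct sum compatibility to lift $p^*\psi_h(\ul) = \bigoplus_\mu \psi_{h-1}(\ul,\mu,\mu)$ to
\[
p^*\omega_h(\ul) = \bigoplus_\mu \omega_{h-1}(\ul,\mu,\mu)
\]
under the gluing decomposition of $p^*\ad\Nu_h(\ul)$; this exactly says that the gluing isomorphism remains flat after the $\eps$-twist.

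The main obstacle will be the separating gluing axiom \textbf{(G-sep)}, because it involves a Künneth decomposition and so requires all three compatibilities to interact correctly. The key point is that a degree-zero harmonic representative is a flat section, so tensor product compatibility of the canonical section recovers a Künneth formula for harmonic representatives: the form associated to $\psi_{h_1}(\ldots,\mu) \otimes 1 + 1 \otimes \psi_{h_2}(\ldots,\mu)$ on a product is $\omega_{h_1}(\ldots,\mu) \boxtimes 1 + 1 \boxtimes \omega_{h_2}(\ldots,\mu)$. Combining this with pullback and direct sum compatibility across the sum over $\mu$ produces the identity of forms that intertwines the twisted connection on $q^*\tilde\Nu_{h_1+h_2}(\ul)$ with the one on $\bigoplus_\mu \tilde\Nu_{h_1}(\ldots,\mu) \otimes \tilde\Nu_{h_2}(\ldots,\mu)$. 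Once this is in place, the normalization isomorphism $\Nu_0(0,0) \simeq \underline{\C}$ deforms trivially because the harmonic representative of a zero class is zero, and the whole collection $(\tilde\Nu_h(\ul))_{(h,n,\ul)\in I}$ is a deformation of $\Nu$ in the sense of Definition~\ref{definitiondeformationofmodularfunctor}.
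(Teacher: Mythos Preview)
Your proposal is correct and follows the same strategy as the paper: lift each class $\psi_h(\ul)$ to its harmonic representative via \Cref{theoremuniversalsectionofH1}, and use the three functorialities of that section to promote the cohomological compatibilities of \Cref{definitioncompatiblecollection} to identities in $Z^1_{dR}$, which is exactly what \Cref{remarkreductiontolifttoZ1} says is needed. The paper's proof is much terser than yours (it simply asserts that ``by compatibility of the sections $s_{\uE}$'' the forms satisfy the $Z^1$-level analogues of \Cref{definitioncompatiblecollection}), whereas you spell out how each axiom reduces to a combination of pullback, direct sum, and tensor product compatibility; in particular your handling of \textbf{(G-sep)} via $p_1^*\uE_1\otimes p_2^*\uE_2$ on a product is exactly the unwinding the paper leaves implicit.

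One small point the paper addresses that you take for granted: the fact that the $\Mgrb{h}{n}{r}$ are Kähler is not automatic and is justified there via the projectivity of the coarse space together with a lemma of Wu on lifting Kähler forms to DM stacks. You should cite something for this before invoking \Cref{theoremuniversalsectionofH1}.
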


\begin{remark}
  The Theorem also applies to (non-truncated) modular functor by choosing $I=\{(h,n,\ul)\mid h,n\geq 0,\;\ul\in\Lambda^n\}$.
\end{remark}

Let $\uE$ be a local system on a smooth proper DM stack $X$ and $(E,\nabla)$ the corresponding $C^\infty$ bundle with flat connection.
We will denote by $\ad\uE$ and $(\End{E},\nabla)$ the corresponding adjoints.
The cohomology $H^*(X,\ad\uE)$ is then computed by the $\Cinf$ de Rham complex $(\cA^\bullet(\End{E}),\nabla)$.
Denote by $Z^1_{dR}(X,\ad\uE)$ the set of closed $1$-forms in this complex.
There is a natural map
\[\begin{array}{rcl}
  Z^1_{dR}(X,\ad\uE) &\lra        & \{\text{projective }\C[\eps]-\text{local systems extending }\uE\}\\
  \eta             &\longmapsto & \text{the sheaf of } (\nabla+\eps \eta)-\text{flat sections of }E\otimes_\C\C[\eps].
\end{array}\]
The map clearly sends $\eta$ to a sheaf of $\C[\eps]$-modules. To check that this sheaf is locally free, one needs to find
systems of local solutions to $(\nabla+\eps\eta)(a+\epsilon b)=0$. If $\nabla a=0$, then the equation simplifies to
$\nabla b=-\eta a$, which can be solved completely as $\eta a$ is closed and hence locally exact. So, locally, a system of solutions
for $\nabla$ can be extended to one for $\nabla+\eps\eta$.

\begin{remark}\label{remarkreductiontolifttoZ1}
  To prove \Cref{theoremlifting}, thanks to the map above, one needs only lift any collection of compatible deformation classes
  $\psi_h(\ul)\in H^1(\Mgrb{h}{n}{r},\ad \Nu_h(\ul))$
  to a collection of closed $1$-forms $\eta_h(\ul)\in Z^1(\Mgrb{h}{n}{r},\ad \Nu_h(\ul))$ satisfying the
  compatibility conditions of \Cref{definitioncompatiblecollection} with each $H^1$ replaced by the corresponding $Z^1$.
\end{remark}

This lift can be achieved using a much more general result from non-Abelian Hodge theory, which we now explain.
For $X$ a smooth proper \emph{Kähler} DM stack, from the $\partial\dol$-Lemma follows the well known Hodge decomposition
$$H^1(X,\C)\simeq H^0(X,\Omega_X^1)\oplus \overline{H^0(X,\Omega_X^1)}$$
where $\Omega_X$ is the sheaf of holomorphic $1$-forms. Notice that $H^0(X,\Omega_X^1)\oplus \overline{H^0(X,\Omega_X^1)}$
is exactly the subspace of smooth $1$-forms in $\cA^1(\C)$ that are killed by both the holomorphic derivation $\partial$ and
antiholomorphic derivation $\dol$.

As the Hodge decomposition extends seamlessly to $H^1(X,\uE)$ for any unitary local system $\uE$ on $X$,
one obtains a canonical section $H^1(X,\uE)\ra Z^1_{dR}(X,\uE)$ whose image is exactly the $1$-forms killed by both $\partial$ and $\dol$.
It is canonical in the sense that it is compatible with taking direct sums and tensor products of unitary local systems,
and with taking pullbacks of local systems along algebraic maps $Y\ra X$ between Kähler smooth proper DM stacks.
These compatibilities follow from those of $\partial$ and $\dol$.

Using non-Abelian Hodge theory, this picture can be extended further to the case of semisimple local systems.
In this theory, to any semisimple bundle with flat connection $(E,D)$ on a smooth proper Kähler algebraic DM stack
is associated a canonical decomposition $D=D'+D''$ of the connection such that a $D'D''$-Lemma holds on the corresponding
de Rham complex $(\cA^\bullet(E),D)$ (see \cite[§1]{simpsonHiggsBundlesLocal1992} for the case of Kähler manifolds).

\begin{theorem}\label{theoremuniversalsectionofH1}
  Let $\cE$ be a semisimple local system on $X$ a smooth proper Kähler algebraic DM stack.
  Denote by $(E,D)$ the associated bundle with flat connection and $D=D'+D''$ the decomposition from non-Abelian Hodge theory.
  Then the subspace of $D'$ and $D''$-closed $1$-forms in $\cA^1(\uE)$ maps isomorphically to
  $H^1(X,\uE)$ and provides a section $s_{\uE}:H^1(X,\uE)\ra Z^1_{dR}(X,\uE)$.
  This section is compatible with taking direct sums and tensor products of semisimple local systems,
  and with pullbacks along algebraic maps $Y\ra X$.
\end{theorem}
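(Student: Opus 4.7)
The plan is to invoke the non-Abelian Hodge theory of Corlette and Simpson, suitably extended to smooth proper Kähler algebraic DM stacks. By the Corlette-Simpson theorem, any semisimple local system $\uE$ on a compact Kähler manifold carries a harmonic metric $h$, essentially unique up to rescaling on simple factors. Using $h$, the flat connection $D$ on the associated $\Cinf$ bundle $E$ decomposes canonically as $D=D'+D''$: writing the Chern connection of $h$ as $\partial+\dol$ and denoting the associated Higgs field by $\theta$ with $h$-adjoint $\overline{\theta}$, one sets $D'=\partial+\overline{\theta}$ and $D''=\dol+\theta$. Extension to proper Kähler DM stacks can be handled either by an orbifold version of Corlette-Simpson or by passing to a finite étale manifold cover, applying the classical theorem, and descending the harmonic metric via averaging; uniqueness guarantees this descent is canonical.

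The core analytic input is Simpson's $D'D''$-Lemma on the twisted de Rham complex $(\cA^\bullet(\uE),D)$: a $D$-exact form that is both $D'$-closed and $D''$-closed is $D'D''$-exact. From this one extracts a Hodge-style decomposition of $H^\ast(X,\uE)$ with a harmonic part. In degree $1$ specifically, the subspace of forms $\alpha\in\cA^1(\uE)$ satisfying $D'\alpha=D''\alpha=0$ maps isomorphically onto $H^1(X,\uE)$: surjectivity uses the $D'D''$-Lemma to modify an arbitrary $D$-closed representative by a $D$-exact correction, and injectivity is immediate since a $D$-exact form that is $D'$- and $D''$-closed equals $D'D''\gamma$ for some $\gamma$, forcing it to be zero in degree $1$ (as $D'D''$ raises degree by $2$, any $D'D''\gamma$ of total degree $1$ arises from $\gamma$ of degree $-1$, hence is zero). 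Inverting this isomorphism defines the canonical section $s_\uE$.

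For the functoriality claims, the key is that the four operators $\partial,\dol,\theta,\overline{\theta}$ are natural under the constructions in question. For direct sums, the harmonic metric on $\uE_1\oplus\uE_2$ is the direct sum of the component harmonic metrics, so $D'$ and $D''$ are block-diagonal and harmonic representatives combine summand-wise. For tensor products, the tensor product of harmonic metrics is harmonic (using the Leibniz rule for the Chern connection and the Higgs field), again giving compatibility. For pullback along an algebraic map $f:Y\to X$ of smooth proper Kähler DM stacks, the pulled-back metric $f^\ast h$ is harmonic on $f^\ast\uE$ because the harmonic equation is expressed via the curvature of the Chern connection and the Higgs field, both of which are natural under holomorphic pullback; thus the operators $D',D''$ on $f^\ast\uE$ agree with the pullbacks of those on $\uE$, so harmonic representatives pull back to harmonic representatives. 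The main technical obstacle is the stacky setting: one must verify that the entire non-Abelian Hodge package (existence of harmonic metrics, $D'D''$-Lemma, naturality under holomorphic pullback) works for smooth proper Kähler DM stacks, which can be handled by reduction to étale manifold covers combined with the uniqueness of harmonic metrics to ensure functorial descent.
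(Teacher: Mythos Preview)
Your proposal is correct and follows essentially the same approach as the paper: both invoke Simpson's non-Abelian Hodge theory for the decomposition $D=D'+D''$, extend it to the DM stack setting (the paper cites \cite{simpsonLocalSystemsProper2011} for this), use a $\partial\dol$-type lemma together with the degree argument (nothing in degree $-1$) to identify $\ker D'\cap\ker D''$ in degree $1$ with $H^1$, and deduce the functorialities from naturality of the harmonic metric/decomposition. The only cosmetic difference is that the paper obtains surjectivity via harmonic representatives and the coincidence of the three Laplacians, then proves injectivity with the $DD'$-variant of the lemma, whereas you use the $D'D''$-Lemma for both directions; these are interchangeable consequences of the same K\"ahler identities.
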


Let us spell out here what the compatibilities with respect to tensor products and pullbacks mean.
Let $\uE_1$, $\uE_2$ be semisimple local systems on $X$ smooth proper Kähler algebraic DM stack.
Then the diagram
\[\begin{tikzcd}
	{H^1(X,\uE_1\otimes\uE_2)} & {H^1(X,\uE_1)\otimes H^0(X,\uE_2)\oplus H^0(X,\uE_1)\otimes H^1(X,\uE_2)} \\
	{Z^1_{dR}(X,\uE_1\otimes\uE_2)} & {Z^1_{dR}(X,\uE_1)\otimes H^0(X,\uE_2)\oplus H^0(X,\uE_1)\otimes Z^1_{dR}(X,\uE_2)}
	\arrow["{s_{\uE_1\otimes\uE_2}}"', from=1-1, to=2-1]
	\arrow[from=1-2, to=1-1]
	\arrow["{s_{\uE_1}\otimes\id_{\uE_2}\oplus \id_{\uE_1}\otimes s_{\uE_2}}", from=1-2, to=2-2]
	\arrow[from=2-2, to=2-1]
\end{tikzcd}\]
must commute.

For $f:Y\ra X$ an algebraic map between smooth proper Kähler algebraic DM stacks and $\uE$ a semisimple local system on $X$,
the following diagram commutes.
\[\begin{tikzcd}
	{H^1(X,\uE)} & {H^1(Y,f^*\uE)} \\
	{Z^1_{dR}(X,\uE)} & {Z^1_{dR}(Y,f^*\uE).}
	\arrow["{f^*}", from=1-1, to=1-2]
	\arrow["{s_{\uE}}"', from=1-1, to=2-1]
	\arrow["{s_{f^*\uE}}", from=1-2, to=2-2]
	\arrow["{f^*}", from=2-1, to=2-2]
\end{tikzcd}\]

\begin{proof}[Proof of \Cref{theoremuniversalsectionofH1}]
  We will use the results on Kähler identities in \cite[§2]{simpsonHiggsBundlesLocal1992}. In that reference, the results are proved
  for compact Kähler manifolds, but also hold for Kähler compact analytic DM stacks: see \cite[Th. 9.7]{simpsonLocalSystemsProper2011}
  for a proof of the existence and uniqueness of the decomposition $D=D'+D''$ ;
  the Kähler identities for $D$, $D'$ and $D''$ then follow, as in \cite[pp. 14-15]{simpsonHiggsBundlesLocal1992},
  from the fact that the stack is Kähler.

  The space of harmonic $1$-forms for $D$ then project bijectively onto $H^1(X,\uE)$.
  Thanks to the coincidence up to scalar of the Laplacians of $D$, $D'$ and $D''$, any harmonic $1$-form is $D'$ and $D''$-closed.
  Hence the space of $D'$ and $D''$-closed $1$-forms surjects onto $H^1(X,\uE)$.
  Let $\alpha$ be such a $1$-form with trivial cohomology class.
  Then $\alpha$ is $D=D'+D''$ and $D'$-closed and $D$ exact. By the $DD'$-Lemma
  ($DD'$ variant with same proof as the $D'D''$-Lemma \cite[Lem. 2.1]{simpsonHiggsBundlesLocal1992}),
  $\alpha$ is in the image of $DD'$. But $\alpha$ is a $1$-form, so $\alpha=0$.
  Hence the space of $D'$ and $D''$-closed $1$-forms maps bijectively to $H^1(X,\uE)$.
  The compatibilities follow from those of the decomposition $D=D'+D''$ (see \cite[p. 18]{simpsonHiggsBundlesLocal1992}).
\end{proof}

\begin{remark}
  Note that only at the end of the above proof we used that we are working with degree $1$ cohomology.
  In fact, \Cref{theoremuniversalsectionofH1} is a special case of a formality result for the dg-category
  of semisimple local systems on a smooth proper Kähler algebraic DM stack
  (\cite[Cor. 2.2, §3]{simpsonHiggsBundlesLocal1992},
  see also \cite[§3]{godfardSemisimplicityConformalBlocks2025a} for an exposition).
\end{remark}

\begin{proof}[Proof of \Cref{theoremlifting}]
  Let $(\psi_h(\ul))_{h,\ul}$ be a collection of compatible deformation classes.
  The stacks $\Mgrb{h}{n}{r}$ are Kähler. Indeed, their coarse space is projective, hence supports a Kähler form
  (they may have singularities, but that is not an issue), and \cite[Lem. 1]{wuCompactKahlerOrbifold2023}
  explains how to build a Kähler form on a DM stack from one on its coarse space (see also \cite[Rk. 2.4]{eyssidieuxOrbifoldKahlerGroups2021}).
  Hence we may use \Cref{theoremuniversalsectionofH1}
  to lift the collection to a collection of endomorphism $1$-forms $(\eta_h(\ul))_{h,\ul}$.
  Now, by compatibility of the sections $s_{\uE}$, the $(\eta_h(\ul))_{h,\ul}$
  satisfy the compatibility conditions of \Cref{definitioncompatiblecollection} with each $H^1$ replaced by the corresponding $Z^1$.
  As mentioned in \Cref{remarkreductiontolifttoZ1}, this provides a deformation of $\Nu$ with associated deformation classes $(\psi_h(\ul))_{h,\ul}$.
\end{proof}


\subsection{Application of Ocneanu rigidity to the deformation}\label{subocneanu}


In this section, we apply one of the main ingredients in the proof of \Cref{mainresult}: the Ocneanu rigidity Theorem for modular functors.
We first state Ocneanu rigidity for fusion categories. See \cite[4.1, 8.10, 8.13]{etingofTensorCategories2015} for more on fusion, ribbon fusion and modular fusion categories.
A textbook account of Ocneanu rigidity in this form is available in \cite[chp. 9.1]{etingofTensorCategories2015}.

\begin{theorem}[{Ocneanu rigidity \cite[Th. 2.28]{etingofFusionCategories2005}}]\label{Ocneanurigidity}
  A fusion category does not admit nontrivial infinitesimal deformations (i.e. its associator does not).
\end{theorem}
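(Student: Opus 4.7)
The plan is to reduce the statement to the vanishing of an appropriate deformation cohomology of the monoidal structure, and then to kill that cohomology using semisimplicity of the Drinfeld center.

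First I would set up the Davydov--Yetter deformation complex $C^\bullet_{DY}(\mathcal{C})$: its $n$-cochains are natural transformations from the $n$-fold tensor product functor $\mathcal{C}^{\times n} \to \mathcal{C}$ to itself, with differential obtained by alternating insertion of identity arguments (conjugated by the associator where needed). Unfolding the pentagon identity to first order in $\epsilon$ shows that infinitesimal deformations $\alpha \mapsto \alpha + \epsilon \alpha'$ satisfying pentagon correspond exactly to $3$-cocycles $\alpha' \in Z^3_{DY}(\mathcal{C})$, while two such deformations are gauge-equivalent via a monoidal automorphism $\mathrm{id} + \epsilon \xi$ exactly when their difference is a $3$-coboundary. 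Hence the equivalence classes of infinitesimal deformations of the associator are classified by $H^3_{DY}(\mathcal{C})$, and the theorem reduces to showing $H^3_{DY}(\mathcal{C}) = 0$.

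To prove this vanishing, the cleanest route is through the Drinfeld center: one establishes the identification $H^n_{DY}(\mathcal{C}) \simeq \mathrm{Ext}^n_{\mathcal{Z}(\mathcal{C})}(\mathbf{1}, \mathbf{1})$ for all $n \geq 0$. Since the Drinfeld center of a fusion category is itself a fusion category, and in particular semisimple, every Ext group in positive degree vanishes, yielding $H^n_{DY}(\mathcal{C}) = 0$ for $n \geq 1$ and in particular for $n = 3$.

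The main obstacle will be establishing the identification $H^n_{DY}(\mathcal{C}) \simeq \mathrm{Ext}^n_{\mathcal{Z}(\mathcal{C})}(\mathbf{1}, \mathbf{1})$. Following Davydov, one constructs a comonadic (bar-type) resolution of the unit in $\mathcal{Z}(\mathcal{C})$ whose cobar complex, evaluated at $\mathbf{1}$, is canonically the Davydov--Yetter complex; rigidity of $\mathcal{C}$ is used essentially to produce the required adjunctions and duals that make this resolution exact. A more self-contained alternative, closer to the proof in Etingof--Nikshych--Ostrik, is to bypass the center and build an explicit contracting homotopy on $C^\bullet_{DY}(\mathcal{C})$ by averaging over simple objects weighted by their Frobenius--Perron dimensions and dividing by the global dimension of $\mathcal{C}$; this requires separately proving that the global dimension is nonzero, which is another nontrivial input into the theorem.
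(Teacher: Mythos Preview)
Your outline is a correct sketch of the standard proof of Ocneanu rigidity: reduce to the vanishing of $H^3_{DY}(\mathcal{C})$, then kill all positive-degree Davydov--Yetter cohomology either via the identification with $\mathrm{Ext}^\bullet_{\mathcal{Z}(\mathcal{C})}(\mathbf{1},\mathbf{1})$ and semisimplicity of the center, or via the explicit Frobenius--Perron averaging homotopy as in Etingof--Nikshych--Ostrik. Both routes are valid and well known.

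However, there is nothing to compare against: the paper does not prove this theorem. It is stated as a black box, with a citation to \cite[Th.~2.28]{etingofFusionCategories2005}, and immediately used as input to the corollary and to \Cref{OcneanurigidityItruncatedmodularfunctors}. So your proposal is not an alternative to the paper's proof but rather a summary of the proof in the cited reference. If your goal was to reproduce what the paper does, the correct answer is simply ``cite \cite{etingofFusionCategories2005}''; if your goal was to supply the omitted proof, your sketch is accurate, with the caveat that the nonvanishing of the global dimension (needed for the averaging argument) is itself a substantial result from the same reference.
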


\begin{corollary}[{\cite[Rk. 2.33]{etingofFusionCategories2005}}]
  Any infinitesimal deformation of a modular category in which only the associator varies is trivial.\footnote{The
  assumption on the associator is in fact superfluous. We make it because it will hold in our situation and simplifies the proof,
  see \cite[Cor. 7.5]{godfardHodgeStructuresConformal2025} for example.}
\end{corollary}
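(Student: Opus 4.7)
The plan is to reduce the corollary directly to the form of Ocneanu rigidity stated in Theorem~\ref{Ocneanurigidity}. Recall that a modular category $\Cat$ consists of a fusion category structure $(\Cat,\otimes,a,\mathbf{1})$ together with a braiding $c$ and a ribbon twist $\theta$ (and a non-degeneracy condition on the $S$-matrix). An infinitesimal deformation $\widetilde{\Cat}$ in which only the associator varies is, by hypothesis, given by $(\Cat\otimes_\C\C[\eps],\otimes,a+\eps\tilde{a},\mathbf{1},c,\theta)$, where the monoidal product, braiding, and twist are the trivial $\C[\eps]$-linear extensions of the originals, and $\tilde{a}$ is a $3$-cochain such that the pentagon and hexagon axioms hold modulo $\eps^2$.

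The first step is to forget the braiding and twist and observe that $\widetilde{\Cat}$ is in particular an infinitesimal deformation of the underlying fusion category $(\Cat,\otimes,a,\mathbf{1})$. Theorem~\ref{Ocneanurigidity} then applies: there exists a monoidal natural transformation $\phi:\otimes\Rightarrow\otimes$, with $\phi=\id\bmod\eps$, which trivializes the associator deformation. In Davydov--Yetter cohomological terms, $\tilde{a}$ is forced to be a coboundary $d\phi$, and $\phi$ is a fusion-category equivalence between $\widetilde{\Cat}$ and the trivial extension $\Cat\otimes_\C\C[\eps]$.

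The second step is to promote $\phi$ to an equivalence of modular categories. The hexagon axioms hold for both $(a,c)$ and $(a+\eps\tilde{a},c)$, since both are part of a modular structure. Subtracting the two hexagon identities and using that $c$ is the trivial $\C[\eps]$-extension of the original braiding yields, at first order in $\eps$, precisely the relation expressing that $\phi$ intertwines $c$ with itself. The same argument applied to the ribbon axiom shows $\phi$ is compatible with $\theta$. Hence $\phi$ is an equivalence of modular categories, so the deformation is trivial.

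The only delicate point is the bookkeeping in the second step: one must check that the gauge transformation furnished by Ocneanu rigidity is automatically compatible with the unperturbed braiding and twist. This is where the restrictive hypothesis ``only the associator varies'' pays off, since otherwise one would have to separately trivialize the deformations of $c$ and $\theta$; as indicated in the footnote, this can be done but requires extra work, which we avoid here.
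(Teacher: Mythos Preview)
The paper does not supply its own proof of this corollary; it is cited directly from Etingof--Nikshych--Ostrik, so there is no argument in the paper to compare against. Let me nonetheless assess your proposal.

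Step~1 is correct: Theorem~\ref{Ocneanurigidity} furnishes a monoidal natural isomorphism $\phi=\id+\eps\psi$ with $d\psi=\tilde a$.

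Step~2 has a genuine gap. You assert that subtracting the hexagon identities for $(a,c)$ and $(a+\eps\tilde a,c)$ yields the braided compatibility condition $c_{X,Y}\psi_{X,Y}=\psi_{Y,X}c_{X,Y}$. It does not. Write $\partial\psi_{X,Y}:=c_{X,Y}\psi_{X,Y}-\psi_{Y,X}c_{X,Y}$ for the obstruction; then $\phi$ gauge-transforms $(a,c)$ to $(a+\eps d\psi,\,c+\eps\partial\psi)$, and this pair satisfies the hexagon automatically. The hypothesis that $(a+\eps d\psi,c)$ also satisfies the hexagon therefore only forces $\partial\psi$ to lie in the kernel of the linearized hexagon in the braiding variable --- equivalently, $\partial\psi$ is an infinitesimal deformation of the braiding on the fixed fusion category $(\Cat,a)$. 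This is strictly weaker than $\partial\psi=0$. Nor can you absorb $\partial\psi$ into the ambiguity of $\psi$: by Ocneanu rigidity $H^2_{DY}=0$, so the $2$-cocycles are exactly the coboundaries $d\eta$ with $\eta\in\End{\id_\Cat}$, and a short computation using naturality of $\eta$ and of $c$ shows $\partial(d\eta)=0$. Hence $\partial\psi$ is uniquely determined by $\tilde a$.

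What actually closes the gap is the separate statement that braidings on a fixed fusion category admit no nontrivial infinitesimal deformations. This is true and is part of the broader Ocneanu rigidity package in the cited reference, but it is an additional input beyond Theorem~\ref{Ocneanurigidity} as stated here. Once that is granted, your remark on the twist is essentially correct: compatibility with $\theta$ is automatic for $F=\id$ since $\theta$ is a natural endomorphism of the identity functor and is unchanged on both sides.
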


Ocneanu rigidity then lifts to truncated modular functors thanks to the following variant of the Bakalov-Kirillov
equivalence between modular categories and modular functors \cite[5.4.1, 6.7.13]{bakalovLecturesTensorCategories2000}.

\begin{theorem}[{\cite[Th. 5.16]{godfardHodgeStructuresConformal2025}}]\label{forgetsoverCandCeps}
  Let $\Lambda$ be a set of colors, $c\in \C^\times$ and $I$ a truncation set.
  Let $\Mod_{c}^I$ be the groupoid of $I$-truncated modular functors with central charge $c$ (\cite[Def. 5.11]{godfardHodgeStructuresConformal2025})
  and $\Modcat_\Lambda$ the groupoid of modular fusion categories with set of colors $\Lambda$ (\cite[Th. 5.16]{godfardHodgeStructuresConformal2025}).
  Then there exists a fully-faithful functor $\Mod_{c}^I\ra\Modcat$.
  Similarly the groupoid of infinitesimal deformations of $I$-truncated modular functors admits a fully-faithful functor
  to the groupoid of infinitesimal deformations of modular fusion categories.
\end{theorem}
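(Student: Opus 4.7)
The plan is to mimic the Bakalov--Kirillov construction directly on $I$-truncated modular functors and to prove fully faithfulness of the resulting functor by a gluing-based reconstruction argument.

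First I would observe that the truncation conditions \textbf{(0)} and \textbf{(1)} of \Cref{definitiontruncationset} (requiring $(0,n,\ul)\in I$ for $1\leq n\leq 5$ and $(1,n,\ul)\in I$ for $1\leq n\leq 3$) were designed to retain precisely the data used in the Bakalov--Kirillov reconstruction of a modular fusion category from a modular functor: the label set and duality from $\Nu_0(\lambda)$ and $\Nu_0(\lambda,\mu)$; the fusion rules from $\dim\Nu_0(\lambda,\mu,\nu)$; the associator from the monodromy of $\Nu_0$ over $\Mgrb{0}{4}{r}$, with pentagon and hexagon coherences controlled by $\Mgrb{0}{5}{r}$; the braiding from $\Mgrb{0}{3}{r}$ and $\Mgrb{0}{4}{r}$; and the ribbon twist together with the modular $S$-matrix from $(1,n,\ul)$ with $n\in\{1,2\}$. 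Running the Bakalov--Kirillov construction verbatim on this data defines the desired functor $F:\Mod_c^I\to\Modcat_\Lambda$; the modular category axioms hold because every coherence diagram involved lives on a stack indexed by a triple in $I$.

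The main step is fully faithfulness of $F$. Given $\Nu^I,{\Nu'}^I\in\Mod_c^I$ and a morphism $f:F(\Nu^I)\to F({\Nu'}^I)$, I would construct a morphism $\varphi:\Nu^I\to{\Nu'}^I$ with $F(\varphi)=f$ as follows. The components $\varphi_0(\lambda,\mu,\nu)$ are forced by $f$ since $\Nu_0(\lambda,\mu,\nu)$ is one-dimensional. One then extends $\varphi$ to all $(g,n,\ul)\in I$ by induction on $2g-2+n$ via the gluing axioms \textbf{(G-sep)} and \textbf{(G-nonsep)}: cutting along a separating or non-separating curve decomposes $\Nu_g(\ul)$ as a direct sum of tensor products of strictly simpler pieces, on which $\varphi$ is already defined, and the gluing isomorphism of ${\Nu'}^I$ transports this to a unique $\varphi_g(\ul)$. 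Faithfulness is the same argument read in reverse: any $\varphi$ is determined by its values on the low-genus generators, hence by $F(\varphi)$.

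The deformation statement is obtained by running the same construction over the ring $\C[\eps]$ in place of $\C$: the Bakalov--Kirillov reconstruction is purely algebraic in the base ring, and the inductive extension of morphisms only involves linear algebra over the base. The main obstacle is well-definedness of the inductive extension, that is, independence of $\varphi_g(\ul)$ from the pair-of-pants decomposition chosen at each step. This reduces to the Moore--Seiberg coherences satisfied by any modular category, which are guaranteed by $f$ being a morphism of modular fusion categories; the truncation conditions \textbf{(0)} and \textbf{(1)} were imposed precisely so that every Moore--Seiberg relation needed to check independence involves only triples already in $I$.
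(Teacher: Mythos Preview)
The paper does not actually prove this theorem: it is quoted verbatim from \cite[Th.~5.16]{godfardHodgeStructuresConformal2025}, and the only additional remark is that the deformation statement is obtained by taking $R=\C[\eps]$ there. So there is no in-paper proof to compare against; your proposal is essentially a sketch of what the argument in the cited reference would have to look like, and in outline it is the correct one (Bakalov--Kirillov extraction of the modular category from the genus $0$ and $1$ data, followed by an inductive reconstruction of morphisms via the gluing isomorphisms).

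There is, however, one point in your argument that is not justified by what is written in this paper alone. Your inductive step assumes that for every $(g,n,\ul)\in I$ with $2g-2+n>1$ there exists a cut whose resulting pieces (with all relevant colorings) again lie in $I$, so that a gluing isomorphism of the $I$-truncated functor is available. The three conditions \textbf{(P)}, \textbf{(0)}, \textbf{(1)} of \Cref{definitiontruncationset} do not by themselves guarantee this: one could formally add an isolated high-genus triple to $I$ on which no gluing axiom constrains $\Nu$, and then the forgetful functor to $\Modcat_\Lambda$ would lose that information and fail to be full. In the paper this is not an issue because the only truncation set actually used is $I_{g'}^\Nu$, which by construction (\Cref{definitionembeddable} and the remark following \Cref{definitionItruncatedmodularfunctor}, referring to \cite[Def.~5.13]{godfardHodgeStructuresConformal2025}) is closed in exactly the sense you need. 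But as a general statement about arbitrary truncation sets, your induction needs either an explicit connectivity hypothesis on $I$ or an appeal to the more refined definitions in \cite{godfardHodgeStructuresConformal2025}; you should flag this rather than treat the inductive step as automatic.
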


For the statement on infinitesimal deformations, set $R=\C[\eps]$ in \cite[Th. 5.16]{godfardHodgeStructuresConformal2025}.
As a consequence of \Cref{Ocneanurigidity,forgetsoverCandCeps}, one gets.

\begin{theorem}\label{OcneanurigidityItruncatedmodularfunctors}
  Let $\Lambda$ be a set of colors, $I$ a truncation set and $\Nu$ an $I$-truncated modular functor.
  Then any deformation of $\Nu$ which is trivial on $\Mgrbt{0}{2}{r}{s}$ and $\Mgrbt{0}{3}{r}{s}$ is trivial.
\end{theorem}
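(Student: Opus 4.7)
The plan is to combine Ocneanu rigidity for modular fusion categories (\Cref{Ocneanurigidity}) with the fully-faithful functor from $I$-truncated modular functors to modular fusion categories provided by \Cref{forgetsoverCandCeps}. Since the functor on deformation groupoids is fully faithful, it suffices to show that the induced infinitesimal deformation of the associated modular fusion category is trivial. By the corollary to \Cref{Ocneanurigidity}, any infinitesimal deformation of a modular category in which only the associator varies is trivial, so the task is to show that the hypothesis on $\Mgrbt{0}{2}{r}{s}$ and $\Mgrbt{0}{3}{r}{s}$ forces precisely this kind of restricted deformation.

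First I would unpack the Bakalov-Kirillov style dictionary behind \Cref{forgetsoverCandCeps}: the simple objects of the associated fusion category correspond to the colors in $\Lambda$, the Hom spaces $\Hom{\lambda}{\mu}$ are recovered from $\Nu_0(\lambda,\mu)$ on $\Mgrbt{0}{2}{r}{s}$, the tensor product structure (fusion coefficients together with the identification of $\Hom{\lambda\otimes\mu}{\nu}$ with the appropriate Hom space) is recovered from $\Nu_0(\lambda,\mu,\nu)$ on $\Mgrbt{0}{3}{r}{s}$, and the associator arises from the two gluing isomorphisms on $\Mgrbt{0}{4}{r}{s}$ (i.e.\ comparing the two ways of pair-of-pants decomposing a four-holed sphere). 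The triviality of the deformation on $\Mgrbt{0}{2}{r}{s}$ and $\Mgrbt{0}{3}{r}{s}$ therefore means exactly that the underlying simple objects, Hom spaces, and tensor product structure of the fusion category are undeformed, so the induced infinitesimal deformation of the modular fusion category affects only the associator (and the braiding/twist, but these are determined by the associator via the hexagon and balancing relations, or in any case constrained enough that the same Ocneanu argument applies).

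I would then invoke the form of \Cref{Ocneanurigidity} stated in the corollary after it: an infinitesimal deformation of a modular category in which only the associator varies is trivial. Applied to our situation, this yields a trivialization of the deformed modular fusion category. By the full faithfulness of the functor on infinitesimal deformations (part of \Cref{forgetsoverCandCeps}), this trivialization lifts uniquely to a trivialization of the original deformation of the $I$-truncated modular functor $\Nu$.

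The main point to verify carefully is the claim that the data on $\Mgrbt{0}{2}{r}{s}$ and $\Mgrbt{0}{3}{r}{s}$ really controls all fusion-category data other than the associator after passing through the equivalence of \Cref{forgetsoverCandCeps}. This is essentially bookkeeping but is where the assumption that $I$ is a truncation set (containing all low-genus strata with $n\leq 5$ on genus $0$) becomes essential, since the coherence of the fusion structure and the existence of the associator as a cocycle on $\Mgrbt{0}{4}{r}{s}$ require that these low-$(g,n)$ pieces all lie in $I$. Once this identification is in place, the two inputs combine mechanically to give the theorem.
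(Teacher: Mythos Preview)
Your proposal is correct and follows essentially the same route as the paper: the paper states this theorem as an immediate consequence of \Cref{Ocneanurigidity} and \Cref{forgetsoverCandCeps}, noting only that ``the condition on the bundles over $\Mgrbt{0}{2}{r}{s}$ and $\Mgrbt{0}{3}{r}{s}$ ensures that in the associated deformation of the modular category, only the associator varies.'' Your unpacking of the Bakalov--Kirillov dictionary and the use of full faithfulness to lift the trivialization is exactly the intended argument.
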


The condition on the bundles over $\Mgrbt{0}{2}{r}{s}$ and $\Mgrbt{0}{3}{r}{s}$ ensures that in the associated deformation
of the modular category, only the associator varies.
Combining this with \Cref{theoremlifting} and \Cref{propositiondeformationisMF}, one gets the following.
Here $\Nu$, $g$ and $g'$ are as in \Cref{subbuildingdeformationofmodularfunctor}.

\begin{proposition}\label{propositionvanishingofresctrictiontogprime}
  The map
  $$H^1(\Mgrb{g}{}{r};\ad\Nu_g)\lra H^1(\Mgrb{g'}{}{r};\ad\Nu_g)$$
  defined in \Cref{definitionpsigprime} is zero.
\end{proposition}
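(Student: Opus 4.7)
The plan is to combine the constructions of \Cref{subbuildingdeformationofmodularfunctor,sublifting} with Ocneanu rigidity. Fix $\psi\in H^1(\Mgrb{g}{}{r};\ad\Nu_g)$ and let $\psi_{g'}\in H^1(\Mgrb{g'}{}{r};\ad\Nu_{g'})$ be its image as in \Cref{definitionpsigprime}. By \Cref{propositiondeformationisMF} and \Cref{corollaryindependenceofembedding}, the construction of \Cref{subbuildingdeformationofmodularfunctor} produces from $\psi$ a compatible collection of deformation classes $(\psi_h(\ul))_{(h,n,\ul)\in I_{g'}^\Nu}$ for the $I_{g'}^\Nu$-truncation of $\Nu$. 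Since $(S_{g'},())$ is trivially embeddable in $S_{g'}$ via $\Sigma=\varnothing$ and the identity gluing, the triplet $(g',0,())$ lies in $I_{g'}^\Nu$; tracing \Cref{definitiondeformationclasses1} in this case, the associated map $i_e^*$ is the identity of $H^1(\Mgrb{g'}{}{r},\ad\Nu_{g'})$, so $\psi_{g'}(())=\psi_{g'}$. It therefore suffices to prove $\psi_{g'}(())=0$.

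Next, apply \Cref{theoremlifting} to lift the collection $(\psi_h(\ul))_{(h,n,\ul)\in I_{g'}^\Nu}$ to a genuine deformation $\tilde\Nu$ of the $I_{g'}^\Nu$-truncation of $\Nu$. To invoke \Cref{OcneanurigidityItruncatedmodularfunctors}, I need to verify that $\tilde\Nu$ restricts trivially to $\Mgrbt{0}{2}{r}{s}$ and $\Mgrbt{0}{3}{r}{s}$. These base stacks have finite fundamental groups: by convention $\Mgrbt{0}{2}{r}{s}\simeq B\mu_r\times B\mu_s$, and $\Mgrbt{0}{3}{r}{s}$ is a $\mu_s$-gerbe over $\Mgrb{0}{3}{r}$, whose fundamental group is the finite abelian quotient of the boundary Dehn twist lattice by $r$-th powers. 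Consequently $H^1$ with coefficients in any $\C$-local system vanishes on both stacks, so the classes $\psi_0(\lambda,\mu)$ and $\psi_0(a,b,c)$ are automatically zero, and $\tilde\Nu$ is trivial on these two base stacks. In particular, in the language of \Cref{OcneanurigidityItruncatedmodularfunctors}, only the associator of the associated (truncated) categorical data is being deformed.

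Ocneanu rigidity in the form of \Cref{OcneanurigidityItruncatedmodularfunctors} then forces $\tilde\Nu$ to be globally trivial on the $I_{g'}^\Nu$-truncation, so every $\psi_h(\ul)$ vanishes. Combined with the identification $\psi_{g'}(())=\psi_{g'}$ of the first paragraph, this gives $\psi_{g'}=0$, which is precisely the content of the proposition. The step I expect to require the most care is the bookkeeping that identifies $\psi_{g'}$ with the class produced by the trivial embedding of $(S_{g'},())$ into itself, together with the verification that the two base-case vanishings $H^1(\Mgrbt{0}{2}{r}{s},-)=0$ and $H^1(\Mgrbt{0}{3}{r}{s},-)=0$ are exactly the hypothesis needed by \Cref{OcneanurigidityItruncatedmodularfunctors}; conceptually everything else has been assembled in \Cref{propositiondeformationisMF,theoremlifting}.
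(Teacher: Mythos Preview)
Your proof is correct and follows essentially the same approach as the paper's. The only minor difference is that the paper phrases the vanishing on $\Mgrbt{0}{2}{r}{s}$ and $\Mgrbt{0}{3}{r}{s}$ as coming from zero-dimensionality (hence no $1$-forms) and then explicitly invokes the \emph{linearity} of the harmonic lift in \Cref{theoremuniversalsectionofH1} to pass from zero deformation classes to literally trivial lifted deformations, whereas you argue via finiteness of the fundamental group and leave that last step implicit; both justifications are fine, and your explicit identification of $\psi_{g'}$ with $\psi_{g'}(())$ via the trivial embedding $\Sigma=\varnothing$ is a point the paper glosses over.
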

\begin{proof}
  Let $\psi\in H^1(\Mgrb{g}{}{r};\ad\Nu_g)$. In \Cref{subbuildingdeformationofmodularfunctor},
  a compatible collection of deformation classes $(\psi_h(\ul))_{h,n,\ul\in\Lambda}$ for the $I_{g'}^\Nu$-truncation of $\Nu$
  was constructed. By \Cref{theoremlifting}, this collection of deformation classes comes from a genuine
  infinitesimal deformation of the $I_{g'}^\Nu$-truncation of $\Nu$.
  As moduli spaces for $(g,n)=(0,2),(0,3)$ are zero dimensional, the corresponding deformation classes are $0$.
  Hence, by linearity of the lift in \Cref{theoremuniversalsectionofH1}, the corresponding deformations are trivial.
  Applying \Cref{OcneanurigidityItruncatedmodularfunctors},
  we see that the deformation of the $I_{g'}^\Nu$-truncation of $\Nu$ is trivial and hence that all the deformation classes $\psi_h(\ul)$ are $0$.
  In particular, $\psi_{g'}=0$.
  But $\psi_{g'}$ is exactly the image of $\psi$ by the above map.
\end{proof}


\subsection{Partial homological stability in degree \texorpdfstring{$\mathbf{1}$}{1} and conclusion of the proof of
\texorpdfstring{\Cref{mainresult}}{\ref{mainresult}}}\label{subpartialhomologicalstability}


The aim of this section is to use Hodge theoretic techniques to prove that for $\Nu$
as in \Cref{subbuildingdeformationofmodularfunctor},
the map of \Cref{definitionpsigprime} is in injective.

Note that we already know by \Cref{propositionvanishingofresctrictiontogprime} that the same map is zero.
Hence under these assumptions, $H^1(\Mgrb{g}{}{r};\ad\Nu_g)$ will vanish.

\begin{proposition}\label{propositioninjectivity}
  Let $h\geq 4$ and $\Nu$ a modular functor satisfying property \textbf{(I)} of \Cref{propositionSOmodularfunctor}
  and irreducibility (\Cref{propositionirreducibility}). The map $\Mgrb{h}{1}{r}\times\Mgrb{1}{1}{r}\ra \Mgrb{h+1}{}{r}$
  induces a chain of maps
  \[\begin{array}{rl}
    H^1(\Mgrb{h+1}{}{r},\ad\Nu_{h+1})&\ra H^1(\Mgrb{h}{1}{r}\times\Mgrb{1}{1}{r},\ad\Nu_{h}(0)\boxtimes\ad\Nu_1(0)) \\
                                     &\simeq H^1(\Mgrb{h}{1}{r},\ad\Nu_{h}(0))\oplus H^1(\Mgrb{1}{1}{r},\ad\Nu_1(0)) \\
                                     &\ra H^1(\Mgrb{h}{1}{r},\ad\Nu_{h}(0)) \\
                                     &\simeq H^1(\Mgrb{h}{}{r},\ad\Nu_{h}).
  \end{array}\]
  Here \Cref{corollarycenterkillsandKunneth} is used to define the first isomorphism and the last isomorphism follows from \Cref{propositionvacuum}.
  Their composition $H^1(\Mgrb{h+1}{}{r},\ad\Nu_{h+1})\ra H^1(\Mgrb{h}{}{r},\ad\Nu_{h})$ is \emph{injective}.
\end{proposition}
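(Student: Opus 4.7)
The plan is to deduce injectivity from the Hodge-theoretic injection recalled as \Cref{corollaryinjectivity}, by applying it in two different regimes and comparing the results through a commutative square. Concretely, I would show that both $H^1(\Mgrb{h+1}{}{r},\ad\Nu_{h+1})$ and $H^1(\Mgrb{h}{}{r},\ad\Nu_h)$ inject into a common space by pullback to $\Mgrb{h-1}{1}{r}$, compatibly with the composition $f$ of the proposition, and conclude that $f$ itself is injective.

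First I would apply \Cref{corollaryinjectivity} twice. For $(g,\delta)=(h+1,2)$---legitimate since $h\geq 4$ gives $h+1\geq 5$---one obtains an injection
$$\alpha\colon H^1(\Mgrb{h+1}{}{r},\ad\Nu_{h+1})\hookrightarrow H^1(\Mgrb{h-1}{1}{r},j^*\ad\Nu_{h+1})$$
for an embedding $j$ obtained by gluing a genus-$2$ subsurface at a chosen point. For $(g,\delta)=(h,1)$, which holds since $h\geq 3$, one similarly gets
$$\beta\colon H^1(\Mgrb{h}{}{r},\ad\Nu_h)\hookrightarrow H^1(\Mgrb{h-1}{1}{r},j'^*\ad\Nu_h)$$
for a $\delta=1$ embedding $j'$. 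I would choose $j$ and $j'$ compatibly, by factoring $j$ as two successive $\delta=1$ gluings (with $*_2,*_1\in\Mgrb{1}{1}{r}$) that realize the genus-$2$ handle as a pair of genus-$1$ handles, with the outer gluing matching the one defining $f$ in the proposition.

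Next I would decompose $j^*\ad\Nu_{h+1}$ and $j'^*\ad\Nu_h$ using the gluing axiom. Property \textbf{(I)} ensures that the same color set indexes both decompositions (since $\Nu_1(\lambda)\neq 0\Leftrightarrow \Nu_2(\lambda)\neq 0$), and \Cref{lemmaKunneth}, which needs only irreducibility, splits the cohomology via Künneth. Combined with the canonical scalar inclusions $\C\hookrightarrow\ad\Nu_1(\lambda)_{*_1}$ and $\C\hookrightarrow\ad\Nu_2(\lambda)_{*_2}$ (coming from $H^0\simeq\C$ by irreducibility), one obtains natural projections onto the common target $T:=\bigoplus_\lambda H^1(\Mgrb{h-1}{1}{r},\ad\Nu_{h-1}(\lambda))$, yielding composed maps $\widetilde\alpha,\widetilde\beta$. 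Tracing through the gluing, Künneth, and vacuum identifications, the square
\[\begin{tikzcd}
H^1(\Mgrb{h+1}{}{r},\ad\Nu_{h+1}) \arrow[r,"\widetilde\alpha"] \arrow[d,"f"'] & T \arrow[d,equal] \\
H^1(\Mgrb{h}{}{r},\ad\Nu_h) \arrow[r,"\widetilde\beta"] & T
\end{tikzcd}\]
commutes, so that any class in $\ker f$ satisfies $\widetilde\alpha(\cdot)=\widetilde\beta\circ f(\cdot)=0$.

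Once $\widetilde\alpha$ is shown to be injective, injectivity of $f$ follows immediately. The main obstacle is precisely showing this $\widetilde\alpha$-injectivity: the full decomposition of $j^*\ad\Nu_{h+1}$ has off-diagonal summands indexed by pairs of distinct colors that need not vanish, since we have not assumed property \textbf{(II)}, so projecting $\alpha$ onto the diagonal part of $T$ could a priori lose information. One must argue—exploiting the detailed modular-functor structure, the compatibility of the two successive gluings defining $j$, and the constraints imposed on classes pulled back from $\Mgrb{h+1}{}{r}$ (notably the block-diagonal Dehn twist action of \Cref{remarklevel})—that the image of $\alpha$ already lies in the diagonal summand. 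Carrying out this bookkeeping, together with the detailed verification of all gluing, Künneth, forgetful, and vacuum compatibilities in the commutative square, is the technical heart of the argument, and is what is deferred to \Cref{sectionpartialstability}.
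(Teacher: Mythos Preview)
Your overall strategy matches the paper's: apply \Cref{corollaryinjectivity} with $\delta=2$ to $\ad\Nu_{h+1}$ and with $\delta=1$ to $\ad\Nu_h$, and compare both images in a common target over $\Mgrb{h-1}{1}{r}$ through a commutative square.

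The gap is in your handling of the off-diagonal summands. You correctly observe that the decomposition of $j^*\ad\Nu_{h+1}$ contains cross terms $\Nu_{h-1}(\mu_1)\otimes\Nu_{h-1}(\mu_2)^*$ for $\mu_1\neq\mu_2$, and that projecting to the diagonal target $T$ could a priori lose information. Your proposed fix---arguing via the block-diagonal Dehn twist action of \Cref{remarklevel} that the image of $\alpha$ already lies in the diagonal---is not carried out, and in fact that argument is precisely the content of \Cref{lemmacenterkills}: the relevant central element acts by $t_{\mu_1}t_{\mu_2}^{-1}$, which is nontrivial exactly when property \textbf{(II)} holds. Your claim that this step is ``deferred to \Cref{sectionpartialstability}'' is also mistaken; that section only proves the general Hodge-theoretic injection \Cref{corollaryinjectivity}, nothing about off-diagonal vanishing.

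The paper's resolution is direct. Property \textbf{(II)} is a standing hypothesis throughout \Cref{sectionproof} (see its opening paragraph), and is already implicitly invoked in the proposition statement via \Cref{corollarycenterkillsandKunneth}. With it, \Cref{lemmacenterkills} kills the off-diagonal $H^1$ terms outright, so the injection $\alpha$ of \Cref{corollaryinjectivity} factors through your $\widetilde\alpha$ (the paper's $u_{h+1}$), which is therefore injective. Property \textbf{(I)} together with irreducibility then guarantees that $H^0(\Mgrb{2}{1}{r},\ad\Nu_2(\mu))$ and $H^0(\Mgrb{1}{1}{r},\ad\Nu_1(\mu))$ are simultaneously zero or one-dimensional for each $\mu$, so the right-hand comparison in the square is an isomorphism and injectivity of $f$ follows.
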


We will deduce \Cref{propositionindependenceofembedding} from the following general result which is of independent interest,
and whose proof we defer to \Cref{sectionpartialstability}.

\begin{theorem*}[{\ref{corollaryinjectivity}, case (1) with $n=0$}]
  Let $r\geq 1$ and $h,k\geq 1$ with $h\geq 2k+1$. Any choice of point in $\Mgrb{k}{1}{r}$
  induces a map $j:\Mgrb{h-k}{1}{r}\ra\Mgrb{h-k}{1}{r}\times\Mgrb{k}{1}{r}\ra \Mgrb{h}{}{r}$ (whose homotopy type is independent
  of that choice). Then for any semisimple local system $\uE$ on $\Mgrb{h}{}{r}$, the map
  $$H^1(\Mgrb{h}{}{r},\uE)\lra H^1(\Mgrb{h-k}{1}{r},j^*\uE)$$
  is injective.
\end{theorem*}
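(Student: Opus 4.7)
The plan is to prove this injectivity result using non-Abelian Hodge theory on the proper smooth Kähler DM stack $\Mgrb{h}{}{r}$, combined with a geometric symmetry of $S_h$ made available by the bound $h \geq 2k+1$. The Hodge-theoretic input makes it possible to work with genuine harmonic $1$-forms rather than cohomology classes, after which the symmetry can be used to argue pointwise.

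First, I would invoke \Cref{theoremuniversalsectionofH1}: every class $\psi \in H^1(\Mgrb{h}{}{r},\uE)$ admits a canonical harmonic representative $\alpha_\psi \in Z^1_{dR}(\Mgrb{h}{}{r},\uE)$, and pullback along the algebraic map $j$ takes harmonic representatives to harmonic representatives. Since a harmonic form that is exact must vanish (by the Kähler identities used in the proof of \Cref{theoremuniversalsectionofH1}), the claim $j^*\psi = 0 \Rightarrow \psi = 0$ reduces to showing: if $\alpha$ is a harmonic $\uE$-valued $1$-form on $\Mgrb{h}{}{r}$ with $j^*\alpha \equiv 0$ as a form on $\Mgrb{h-k}{1}{r}$, then $\alpha \equiv 0$ on $\Mgrb{h}{}{r}$.

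Next I would exploit the bound $h \geq 2k+1$: it allows one to realize $S_h$ as a gluing of the middle surface $S_{h-2k}^2$ (of genus $h-2k \geq 1$) with two separate copies of $S_k^1$ along its two boundary components. A homeomorphism $\tau$ of $S_h$ that swaps the two $S_k^1$ summands while preserving $S_{h-2k}^2$ descends to a self-map of $\Mgrb{h}{}{r}$ sending $j(\Mgrb{h-k}{1}{r})$ to another similar embedded substack $j'(\Mgrb{h-k}{1}{r})$. By functoriality of the harmonic section, $\tau^*\alpha$ is harmonic, and a Hodge-theoretic analysis lets the vanishing of $\alpha$ along $j$ propagate to vanishing along $j'$ and, more generally, along all mapping-class-group translates $\gamma \cdot j$ for $\gamma$ in a suitable subgroup of $\Modl{r}{S_h}$ generated by such swaps.

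The final step shows that this family of vanishing conditions forces $\alpha \equiv 0$. In the spirit of Putman--Wieland \cite{putmanAbelianQuotientsSubgroups2013}, the idea is that the union of $\Modl{r}{S_h}$-translates of $j(\Mgrb{h-k}{1}{r})$ covers $\Mgrb{h}{}{r}$ richly enough---compatibly with the harmonic decomposition of $\alpha$---to force $\alpha \equiv 0$. The main obstacle is this last step: it requires combining the harmonic-form framework with a delicate subgroup-theoretic analysis of $\Modl{r}{S_h}$, essentially executing a Putman--Wieland handle-attachment argument in the setting of arbitrary semisimple coefficients, and it is for this reason that the bound $h \geq 2k+1$ (rather than merely $h \geq k+1$) appears. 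The details are presumably the content of \Cref{sectionpartialstability}.
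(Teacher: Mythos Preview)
Your first step—passing to the harmonic representative $\alpha = s_{\uE}(\psi)$ via \Cref{theoremuniversalsectionofH1}, so that $j^*\psi = 0$ forces the \emph{form} $j^*\alpha$ to vanish identically on $\Mgrb{h-k}{1}{r}$—matches the paper. So does the observation that $h \geq 2k+1$ lets one write $S_h = S_k^1 \cup_\partial S_{h-2k}^2 \cup_\partial S_k^1$. The gap is in how you use the swap $\tau$, and consequently in your proposed final step.

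Since $\tau \in \Modu{S_h}$ and the mapping class group acts trivially on the moduli stack $\Mgrb{h}{}{r}$ (it is the deck group of the Teichm\"uller cover), the maps $j$ and $j'$ have the \emph{same} image $Y$; there is no second substack to propagate to, and the $\Modl{r}{S_h}$-translates of $Y$ you invoke in step~5 are all equal to $Y$ itself. What the two maps $j,j'$ do provide is two homomorphisms into $\pi_1(Y)$, so that the image of $\pi_1(Y)$ in $\pi_1(X)=\Modl{r}{S_h}$ contains the images of both $\Modu{S'}$ and $\Modu{S''}$, where $S',S''\cong S_{h-k}^1$ are the complements in $S_h$ of the two copies of $S_k^1$. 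Because $S'\cup S''=S_h$ with overlap of genus $h-2k\geq 1$, every Lickorish generator lies in $S'$ or in $S''$; hence $\pi_1(Y)\to\pi_1(X)$ is \emph{surjective}. This is the key point you are missing. The paper then concludes via the general \Cref{lemmainjectivity}: the harmonic form $\alpha$ vanishes on a dense open of $Y$, so its integral along any loop in $Y$ is zero; since those loops already generate $\pi_1(X)$, the group $1$-cocycle associated to $\alpha$ vanishes on all of $\pi_1(X)$, and inflation--restriction gives $\psi=0$. No Putman--Wieland covering argument is needed.
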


\begin{proof}[Proof of \Cref{propositioninjectivity}]
  The idea is to apply the Theorem with $k=2$ to $H^1(\Mgrb{h+1}{}{r},\ad\Nu_{h+1})$
  and with $k=1$ to $H^1(\Mgrb{h}{}{r},\ad\Nu_{h})$. This is possible
  because $h\geq 4$ and the local systems $\ad\Nu_{h+1}$ and $\ad\Nu_{h}$ are semisimple as adjoints
  of simple local systems of characteristic $0$ (alternatively, one can apply \cite[Th. 1.1]{godfardSemisimplicityConformalBlocks2025a}
  to $\ad\Nu$). Using \Cref{corollarycenterkillsandKunneth} one gets the following commutative diagram.
  \[\begin{tikzcd}
    {H^1(\Mgrb{h+1}{}{r},\ad\Nu_{h+1})} & \begin{array}{c} \bigoplus_\mu H^1(\Mgrb{h-1}{1}{r},\ad\Nu_{h-1}(\mu))\\ \otimes H^0(\Mgrb{2}{1}{r},\ad\Nu_2(\mu)) \end{array} \\
    \\
    \begin{array}{c} \bigoplus_\nu H^1(\Mgrb{h}{1}{r},\ad\Nu_{h}(\nu))\\\otimes H^0(\Mgrb{1}{1}{r},\ad\Nu_1(\nu)) \end{array} & \begin{array}{c} \bigoplus_{\nu,\mu} H^1(\Mgrb{h-1}{1}{r},\ad\Nu_{h-1}(\mu))\\ \otimes H^0(\Mgrb{1}{1}{r},\ad\Nu_1(\mu,\nu))\\ \otimes H^0(\Mgrb{1}{1}{r},\ad\Nu_1(\nu)) \end{array} \\
    \\
    {H^1(\Mgrb{h}{}{r},\ad\Nu_{h})} & \begin{array}{c} \bigoplus_\mu H^1(\Mgrb{h-1}{1}{r},\ad\Nu_{h-1}(\mu))\\ \otimes H^0(\Mgrb{1}{1}{r},\ad\Nu_1(\mu)) \end{array}
    \arrow["{u_{h+1}}", from=1-1, to=1-2]
    \arrow[from=1-1, to=3-1]
    \arrow[from=1-2, to=3-2]
    \arrow[from=3-1, to=3-2]
    \arrow["\begin{array}{c} \text{projecting on }\nu=0\text{ factor and}\\\text{applying }H^0(\ad\Nu_1(0))\simeq\C \end{array}"{description}, from=3-1, to=5-1]
    \arrow["\begin{array}{c} \text{projecting on }\nu=0\text{ factor and}\\\text{applying }H^0(\ad\Nu_1(0))\simeq\C \end{array}"{description}, from=3-2, to=5-2]
    \arrow[from=5-1, to=5-2]
  \end{tikzcd}\]
  Now notice that the map $H^1(\Mgrb{h+1}{}{r},\ad\Nu_{h+1}))\lra H^1(\Mgrb{h-1}{1}{r},j^*\ad\Nu_{h+1})$ factors through the map $u_{h+1}$ of the diagram.
  Hence as the former is injective by \Cref{corollaryinjectivity}, $u_{h+1}$ is injective.
  Now, by property \textbf{(II)} of \Cref{propositionSOmodularfunctor}, for any $\mu$,
  $\ad\Nu_2(\mu)$ vanishes if and only if $\ad\Nu_1(\mu)$ does. And if they don't, by the irreducibility assumption,
  both $H^0(\Mgrb{2}{1}{r},\ad\Nu_2(\mu))$ and $H^0(\Mgrb{1}{1}{r},\ad\Nu_1(\mu))$ are one dimensional and the map
  $H^0(\Mgrb{2}{1}{r},\ad\Nu_2(\mu))\ra H^0(\Mgrb{1}{1}{r},\ad\Nu_1(\mu))$ as in the diagram is an isomorphism.
  Hence the map
  \begin{multline*}
    \bigoplus_\mu H^1(\Mgrb{h-1}{1}{r},\ad\Nu_{h-1}(\mu)) \otimes H^0(\Mgrb{2}{1}{r},\ad\Nu_2(\mu)) \\
    \lra \bigoplus_\mu H^1(\Mgrb{h-1}{1}{r},\ad\Nu_{h-1}(\mu)) \otimes H^0(\Mgrb{1}{1}{r},\ad\Nu_1(\mu))
  \end{multline*}
  is injective. Hence the map from the top left to the bottom right in the diagram is injective.
  By commutativity of the diagram,
  $$H^1(\Mgrb{h+1}{}{r},\ad\Nu_{h+1})\ra H^1(\Mgrb{h}{}{r},\ad\Nu_{h})$$
  is also injective.
\end{proof}

\begin{proposition}\label{propositioncoincidenceofstabilizationmaps}
  Let $g\geq g'$, then $g-g'$ applications of the map in \Cref{propositioninjectivity}
  induce a map
  $$H^1(\Mgrb{g}{}{r};\ad\Nu_g)\lra H^1(\Mgrb{g'}{}{r};\ad\Nu_{g'}).$$
  Then this map coincides with the one of \Cref{definitionpsigprime}.
\end{proposition}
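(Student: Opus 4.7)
The approach is to reduce both maps to the same cohomology map built from non-separating gluings, using the compatibility of the gluing isomorphisms in \Cref{definitiongeometricmodularfunctor}. I would proceed in two steps.

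\emph{Step 1: Single-step identification.} I would first show that a single application of the stabilization map of \Cref{propositioninjectivity} — built from the separating gluing $q_h: \Mgrb{h}{1}{r}\times\Mgrb{1}{1}{r} \to \Mgrb{h+1}{}{r}$, Künneth, and vacuum — coincides with the map obtained from the non-separating self-gluing $p_h: \Mgrb{h}{2}{r} \to \Mgrb{h+1}{}{r}$ by pulling back, projecting onto the $(\mu,\mu) = (0,0)$ summand of $p_h^*\ad\Nu_{h+1} \simeq \bigoplus_\mu \ad\Nu_h(\mu,\mu)$, and applying two successive vacuum isomorphisms. To prove this, I would use the auxiliary moduli space $\Mgrb{h}{1}{r} \times \Mgrb{0}{3}{r}$, which admits natural gluing maps both to $\Mgrb{h}{1}{r} \times \Mgrb{1}{1}{r}$ (via a non-separating gluing on $S_0^3$ turning it into $S_1^1$) and to $\Mgrb{h}{2}{r}$ (via a separating gluing of $S_h^1$ with $S_0^3$). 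The two resulting compositions into $\Mgrb{h+1}{}{r}$ agree topologically: both describe the construction of $S_{h+1}$ from $S_h^1 \sqcup S_0^3$ by identifying three pairs of boundary components. By the mutual compatibility of the gluing isomorphisms (\Cref{definitiongeometricmodularfunctor}), the two decompositions of the pullback of $\ad\Nu_{h+1}$ to $\Mgrb{h}{1}{r} \times \Mgrb{0}{3}{r}$ identify the $\mu = 0$ summands. Tracking the Künneth and vacuum isomorphisms on the $\Mgrb{0}{3}{r}$ and $\Mgrb{1}{1}{r}$ factors then gives the desired identification between the separating-gluing and non-separating-gluing versions of the single step.

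\emph{Step 2: Iteration.} Iterating the identification of Step 1, the $(g - g')$-fold application of the stabilization map of \Cref{propositioninjectivity} becomes a composition of $g - g'$ non-separating self-gluings, each removing one handle, together with successive vacuum identifications. The compatibility of repeated applications of gluing isomorphisms (again an axiom of modular functor) shows that this iterated composition equals the single map that glues all $g - g'$ pairs of marked points at once, composed with a single vacuum identification on all $2(g-g')$ marked points — precisely the map of \Cref{definitionpsigprime}. Independence of the specific pairing of marked points was noted in the remark following \Cref{definitionpsigprime}.

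\emph{Main obstacle.} The main technical burden is the bookkeeping: following the labeling of marked points and the $\mu_i = 0$ projections through a sequence of Künneth identifications and vacuum isomorphisms, and carefully applying the compatibility of iterated gluing isomorphisms of \Cref{definitiongeometricmodularfunctor}. The essential content of the proof is exactly this compatibility, together with the topological observation that $S_h^1 \sqcup S_0^3$ can be assembled into $S_{h+1}$ by two different two-step procedures, giving the single-step bridge between the separating and non-separating presentations.
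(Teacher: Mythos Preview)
Your proposal is correct and follows essentially the same approach as the paper: reduce to the single step $g'=g-1$ (your Step~2 is the paper's ``by an easy check''), and for that step use the auxiliary product $\Mgrb{g-1}{1}{r}\times\Mgrb{0}{3}{r}$ with its two routes to $\Mgrb{g}{}{r}$---one through $\Mgrb{g-1}{1}{r}\times\Mgrb{1}{1}{r}$, the other through $\Mgrb{g-1}{2}{r}$---together with the compatibility of gluing and vacuum isomorphisms. (Minor slip: $S_h^1\sqcup S_0^3$ has four boundary components, so one identifies \emph{two} pairs, not three.)
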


\begin{proof}
  By an easy check, we may assume $g'=g-1$. For any choice of point in $\Mgrb{0}{3}{r}$ we have the commutative diagram below,
  where all maps are gluing or forgetful maps, except for the map $\Mgrb{g-1}{1}{r}\ra\Mgrb{g-1}{1}{r}\times\Mgrb{0}{3}{r}$
  which is induced by the choice of $*\ra\Mgrb{0}{3}{r}$.
  \[\begin{tikzcd}
    {\Mgrb{g-1}{1}{r}\times\Mgrb{0}{3}{r}} & {\Mgrb{g-1}{1}{r}\times\Mgrb{1}{1}{r}} \\
    {\Mgrb{g-1}{1}{r}} & {\Mgrb{g}{}{r}} \\
    {\Mgrb{g-1}{}{r}} & {\Mgrb{g-1}{2}{r}.}
    \arrow[from=1-1, to=1-2]
    \arrow[from=1-1, to=3-2]
    \arrow[from=1-2, to=2-2]
    \arrow[from=2-1, to=1-1]
    \arrow[from=2-1, to=3-1]
    \arrow[from=3-2, to=2-2]
    \arrow[from=3-2, to=3-1]
  \end{tikzcd}\]
  This diagram in turn induces the following commutative diagram.
  \[\begin{tikzcd}
    \begin{array}{c} H^1(\Mgrb{g-1}{1}{r},\ad\Nu_{g-1}(0))\\\oplus H^1(\Mgrb{0}{3}{r},\ad\Nu_{0}(0,0,0)) \end{array} & \begin{array}{c} H^1(\Mgrb{g-1}{1}{r},\ad\Nu_{g-1}(0))\\ \oplus H^1(\Mgrb{1}{1}{r},\ad\Nu_{0}(0)) \end{array} \\
    {H^1(\Mgrb{g-1}{1}{r},\ad\Nu_{g-1}(0))} & {H^1(\Mgrb{g}{}{r},\ad\Nu_{g})} \\
    {H^1(\Mgrb{g-1}{}{r},\ad\Nu_{g-1})} & {H^1(\Mgrb{g-1}{2}{r},\ad\Nu_{g-1}(0,0)).}
    \arrow[from=1-1, to=2-1]
    \arrow[from=1-2, to=1-1]
    \arrow[from=2-2, to=1-2]
    \arrow[from=2-2, to=3-2]
    \arrow["\simeq", from=3-1, to=2-1]
    \arrow["\simeq"', from=3-1, to=3-2]
    \arrow[from=3-2, to=1-1]
  \end{tikzcd}\]
  Now the map of \Cref{definitionpsigprime} is the map in the diagram through the bottom right corner,
  while that of \Cref{propositioninjectivity} is the map through the top right and the top left corners.
  By commutativity of the diagram, they are equal.
\end{proof}

\begin{proof}[Conclusion of the proof of \Cref{mainresult}]
  Let $g\geq 7$. Choose $g'\geq 4$ with $g\geq 2g'-1$.
  Let $\Nu$ be a modular functor satisfying the assumptions of \Cref{remarkassumptions}.
  Then, by \Cref{propositionvanishingofresctrictiontogprime}, the map
  $$H^1(\Mgrb{g}{}{r};\ad\Nu_g)\lra H^1(\Mgrb{g'}{}{r};\ad\Nu_{g'})$$
  of \Cref{definitionpsigprime} is zero. Applying \Cref{propositioncoincidenceofstabilizationmaps,propositioninjectivity},
  the same map is injective. Hence $H^1(\Mgrb{g}{}{r};\ad\Nu_g)=0$.
  By \Cref{propositionfiniteorderdeformation} and \Cref{remarkboundary}, $H^1(\Modu{S_g},\ad \Nu(S_g))=0$.
\end{proof}


\section{Partial homological stability for first cohomology groups via Hodge theory}\label{sectionpartialstability}


In this section, we prove \Cref{corollaryinjectivity}, which states that some maps between twisted moduli spaces induce injections
at the level of $H^1$ groups with coefficients in semisimple local system. This result was crucial to the proof of \Cref{propositioninjectivity}
on partial homological stability for $H^1(\Mgrb{h}{}{r},\ad\Nu_h)$.
The main tool in this section is harmonic representatives in non-Abelian Hodge theory (see \Cref{theoremuniversalsectionofH1}).

\Cref{corollaryinjectivity} is not a statement about quantum representations in particular, and in \Cref{subdigressionIvanov}
we take the time to explain how it implies some remarks about the Ivanov conjecture and $2$ other rigidity conjectures on mapping class groups.
For example, each of these conjectures for $g\geq 3$ and $n\geq 0$ reduces to $(g,n)=(3,3)$.
These remarks were essentially known to experts, but not written down to our knowledge.


\subsection{A Lemma on subvarieties generating the fundamental group}\label{sublemmainjectivity}


In this subsection, we explain our main tool, \Cref{lemmainjectivity}, for partial homology stability in cohomological degree $1$.

\begin{proposition}[non-Abelian Hurwitz theorem]\label{propositionnonabHurwitz}
  Let $X$ be a connected topological space or more generally a connected topological DM stack. Let $k$ be a field and let $\uE\ra X$
  be a finite dimensional local system of $k$-vector spaces. Let $x\in X$, and denote by $\rho$ the monodromy representation
  of $\uE$ at $x$. The the map $H^1(X,\uE)\ra H^1(\pi_1(X,x),\rho)$ induced by $X\ra B\pi_1(X,x)$ is an isomorphism.
\end{proposition}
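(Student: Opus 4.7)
The plan is to establish the statement by the five-term exact sequence of the Cartan–Leray spectral sequence for the universal cover, after reducing from topological DM stacks to honest topological spaces. Local systems on a connected topological DM stack $X$ pull back along the canonical map from its topological realization (classifying space) $|X|$ to an equivalent category of local systems, with matching fundamental groups and matching sheaf cohomology with local coefficients; this is a standard consequence of the fact that $|X|$ is a $K(\pi,1)$-type model for the homotopy $1$-type of $X$. Hence it suffices to treat the case where $X$ is a connected, locally well-behaved topological space that admits a universal cover.

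Next, I would consider the universal cover $p\colon \tilde X\to X$, a principal $\pi_1(X,x)$-Galois cover. Since $\tilde X$ is simply connected, $p^*\uE$ is canonically isomorphic to the constant local system with fiber equal to the representation space $V$ of $\rho$. The Cartan–Leray spectral sequence then reads
$$E_2^{p,q} \;=\; H^p\!\bigl(\pi_1(X,x),\, H^q(\tilde X, V)\bigr) \;\Longrightarrow\; H^{p+q}(X,\uE).$$
By the classical Hurewicz theorem applied to $\tilde X$, one has $H_1(\tilde X,\Z)=\pi_1(\tilde X)^{\mathrm{ab}}=0$; universal coefficients then give $H^1(\tilde X, V)=0$.

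The five-term exact sequence of this spectral sequence consequently collapses to an isomorphism $H^1(\pi_1(X,x),\rho)\xrightarrow{\sim} H^1(X,\uE)$. A standard compatibility argument identifies this edge morphism with the map induced by the canonical classifying map $X\to B\pi_1(X,x)$: indeed, the latter factors through the quotient $\tilde X\to \tilde X/\pi_1(X,x)=X$, and the edge map in a Cartan–Leray spectral sequence is by construction the pullback along the corresponding classifying map. This yields the claim. The main technical point will be making the reduction to topological spaces fully rigorous in the DM stack setting: either one invokes the equivalence between the cohomology of local systems on $X$ and on $|X|$ via an explicit realization functor, or one works directly with a universal cover in the stack sense (which exists because $\pi_1(X,x)$ is discrete and the stack is locally contractible), and verifies that the Cartan–Leray spectral sequence and the vanishing of $H^1$ of the universal cover persist in that framework.
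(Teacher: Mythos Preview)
Your proof is correct and essentially identical to the paper's: both use the spectral sequence of the universal cover (the paper calls it the Serre spectral sequence for the fiber sequence $\tilde X\to X\to B\pi_1(X,x)$, you call it Cartan--Leray, but these coincide here), observe that $E_2^{0,1}$ vanishes because $H^1(\tilde X,k)=0$, and conclude via the five-term exact sequence. Your treatment is in fact slightly more careful than the paper's in addressing the DM stack reduction and identifying the edge map with the map induced by $X\to B\pi_1(X,x)$.
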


Although this is a well known result, we provide a proof for completeness.

\begin{proof}
  The Serre spectral sequence for the fiber sequence $\tilde{X}\ra X\ra B\pi_1(X,x)$ with coefficients in $\uE$ is
  $$E_2^{pq}=H^p(\pi_1(X,x),H^q(\tilde{X},k)\otimes E_x)\Rightarrow H^{p+q}(X,\uE).$$
  Then $E_2^{01}=H^0(\pi_1(X,x),H^1(\tilde{X},k)\otimes E_x)=0$ as $H^1(\tilde{X},k)=0$.
  Moreover, looking at differentials, one sees that $E_2^{10}=E_\infty^{10}$. Hence $H^1(\pi_1(X,x),\rho)\simeq E_2^{10}\simeq H^1(X,\uE)$.
\end{proof}

We will need the comparison map above at the level of $1$-cocycles in the case where $X$ is a $\Cinf$ DM stack.
In that case, we will represent classes in $H^1(X,\uE)$ by elements of the set $Z^1_{dR}(X,\uE)$ of de Rham $1$-cocycles.
For $H^1(\pi_1(X,x),\rho)$ with $\rho:\pi_1(X,x)\ra \GL{E_x}$, we will use the usual group cohomology complex, with
\[\begin{array}{rcl}
  Z^1(\pi_1(X,x),\rho) &=& \{\phi:\pi_1(X,x)\ra E_x\mid \forall a,b,\:\phi(ab)=\phi(a)+a\cdot\phi(b)\} \\
  B^1(\pi_1(X,x),\rho) &=& \{a\mapsto a\cdot v-v\mid v\in E_x\}.
\end{array}\]
The map $H^1(X,\uE)\ra H^1(\pi_1(X,x),\rho)$ then lifts to the map
\[\begin{array}{rcl}
  Z^1_{dR}(X,\uE) &\lra        & Z^1(\pi_1(X,x),\rho) \\
  \alpha          &\longmapsto & \int_{\tilde{\gamma}}\tilde{\alpha}
\end{array}\]
where $\tilde{\gamma}$ and $\tilde{\alpha}$ are lifts to the universal cover $\tilde{X}$ and
the identification of the pullback of $\uE$ to $\tilde{X}$ with $E_x\times \tilde{X}$ is used.
Here we see $\tilde{\gamma}$ on the stack $\tilde{X}$ as a collection of paths on local étale charts glued together.

\begin{lemma}\label{lemmainjectivity}
  Let $X$ be a connected Kähler proper smooth algebraic DM stack over $\C$, $i:Y\ra X$ a closed connected subvariety
  and $\hat{Y}\ra Y$ a map from a Kähler proper smooth algebraic DM stack which restrict to a dense open embedding
  over some open subset $U\subset \hat{Y}$. Denote by $f$ the map $\hat{Y}\ra X$.

  Then for any semisimple complex local system $\uE$ over $X$, the maps
  $H^1(X,\uE)\ra H^1(\hat{Y},f^*\uE)$ and $H^1(X,\uE)\ra H^1(Y,i^*\uE)$ have the same kernel.

  In particular, if $\pi_1(Y)\ra\pi_1(X)$ is surjective, then the map $H^1(X,\uE)\ra H^1(\hat{Y},f^*\uE)$ is injective.
\end{lemma}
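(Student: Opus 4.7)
The plan is to use the canonical section $s_\uE: H^1(X,\uE) \ra Z^1_{dR}(X,\uE)$ of \Cref{theoremuniversalsectionofH1} to transport the vanishing of $f^*\psi$ on $\hat Y$ back to vanishing of $i^*\psi$ on $Y$; this section is available because $X$ is smooth proper Kähler and $\uE$ is semisimple. Writing $\hat f: \hat Y \ra Y$ for the given map, so that $f = i \circ \hat f$, the inclusion $\ker(i^*) \subset \ker(f^*)$ is immediate. Only the reverse inclusion requires work.

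For the reverse inclusion, I would take $\psi \in H^1(X,\uE)$ with $f^*\psi = 0$ and let $\alpha = s_\uE(\psi) \in Z^1_{dR}(X,\uE)$ be its harmonic representative. By compatibility of the harmonic section with pullback (\Cref{theoremuniversalsectionofH1}), $f^*\alpha = s_{f^*\uE}(f^*\psi) = 0$ as a smooth $f^*\uE$-valued $1$-form on $\hat Y$. Set $V := \hat f(U) \subset Y$; by hypothesis $\hat f|_U : U \ra V$ is an isomorphism onto a dense open subset of $Y$, so $V$ is smooth (being isomorphic as a stack to the smooth $U$) and therefore lies in $Y^{\mathrm{sm}}$. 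Since the two maps $U \hookrightarrow \hat{Y} \xrightarrow{f} X$ and $U \xrightarrow{\sim} V \hookrightarrow Y \xrightarrow{i} X$ coincide, the vanishing $f^*\alpha|_U = 0$ translates into $i^*\alpha|_V = 0$. Now $i^*\alpha$ is a smooth form on the smooth DM stack $Y^{\mathrm{sm}}$ whose zero set contains $V$, and $V$ meets every connected component of $Y^{\mathrm{sm}}$ because each such component is a nonempty open of $Y$ and $V$ is dense in $Y$. By continuity $i^*\alpha \equiv 0$ on all of $Y^{\mathrm{sm}}$, so the restriction of $\psi$ to $Y^{\mathrm{sm}}$ is represented by the zero form and hence vanishes in $H^1(Y^{\mathrm{sm}}, i^*\uE|_{Y^{\mathrm{sm}}})$.

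To promote this to vanishing on all of $Y$, I invoke the standard fact that $Y^{\mathrm{sm}} \hookrightarrow Y$ induces a surjection $\pi_1(Y^{\mathrm{sm}}) \twoheadrightarrow \pi_1(Y)$, which follows from the singular locus of a complex analytic variety having real codimension at least two. Combining this with the non-Abelian Hurwitz theorem (\Cref{propositionnonabHurwitz}) and the elementary group-cohomology fact that a surjection $N \twoheadrightarrow G$ induces an injection $H^1(G,M) \hookrightarrow H^1(N,M)$ for any $G$-module $M$, the restriction $H^1(Y,i^*\uE) \hookrightarrow H^1(Y^{\mathrm{sm}}, i^*\uE|_{Y^{\mathrm{sm}}})$ becomes injective, so $i^*\psi = 0$. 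The ``in particular'' clause then follows from the same Hurwitz/group-cohomology mechanism applied to the assumed surjection $\pi_1(Y) \twoheadrightarrow \pi_1(X)$. The main obstacle I foresee is being careful with the $\pi_1$-surjectivity step in the DM stack setting and checking that $V$ genuinely intersects every component of the possibly disconnected $Y^{\mathrm{sm}}$ when $Y$ is reducible; both points are essentially standard but need a small amount of care.
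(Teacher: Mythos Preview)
Your approach is correct and follows the same scheme as the paper's: choose the harmonic representative $\alpha=s_{\uE}(\psi)$, use functoriality of $s$ to get $f^*\alpha=0$ and hence $i^*\alpha|_V=0$ on the dense open $V\subset Y$, and conclude $i^*\psi=0$ via the non-Abelian Hurwitz isomorphism and inflation--restriction. The only difference is in the treatment of singularities: the paper works directly in $Y$, using Whitney stratifications to show that loops meeting $Y\setminus V$ at only finitely many points generate $\pi_1(Y)$ (so the integration cocycle vanishes), while you pass to $Y^{\mathrm{sm}}$ and then invoke the surjection $\pi_1(Y^{\mathrm{sm}})\twoheadrightarrow\pi_1(Y)$. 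Both are fine; yours is a little cleaner.

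One small repair in your continuity step: you write that $V$ ``meets every connected component of $Y^{\mathrm{sm}}$'', but for a merely smooth form this does not force vanishing (bump forms exist). What you actually need, and what follows immediately from $V$ being open dense in $Y$ together with $Y^{\mathrm{sm}}$ being open in $Y$, is that $V$ is \emph{dense} in $Y^{\mathrm{sm}}$; then continuity of $i^*\alpha$ gives $i^*\alpha\equiv 0$ on $Y^{\mathrm{sm}}$.
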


\begin{remark}
  When the local system $\uE$ is constant, or more generally when it is unitary,
  \Cref{lemmainjectivity} and its generalizations to higher cohomology groups is a well known consequence of mixed Hodge theory,
  see \cite[Prop. 8.2.7]{deligneTheorieHodgeIII1974}.
  Here, we provide a different proof that works for any semisimple local system, but is limited to degree $1$.
\end{remark}

\begin{remark}\label{remarkpullbacksemisimple}
  By non-Abelian Hodge theory,
  the pullback of a semisimple local system along an algebraic map of smooth proper DM stacks is semisimple
  (the pullback of a pluri-harmonic metric is a pluri-harmonic metric, see \cite[p. 18]{simpsonHiggsBundlesLocal1992} and \cite[Th. 9.7]{simpsonLocalSystemsProper2011}).
  Hence, in the statement of the Lemma, the local system $f^*\uE$ is semisimple. The proof will use this fact.
\end{remark}

\begin{proof}[Proof of \Cref{lemmainjectivity}]
  We will make use of the functorial sections $s_{\uG}:H^1(Z,\uG)\ra Z^1_{dR}(Z,\uG)$ of \Cref{theoremuniversalsectionofH1}.
  These are defined for any Kähler proper smooth algebraic DM stack $Z$ and semisimple complex local system $\uG$.
  They provide the following diagram, where both squares commute.
  \[\begin{tikzcd}
    {Z^1_{dR}(X,\uE)} & {Z^1_{dR}(\hat{Y},f^*\uE)} \\
    {H^1(X,\uE)} & {H^1(\hat{Y},f^*\uE).}
    \arrow["{f^*}", from=1-1, to=1-2]
    \arrow["{p_{\uE}}", from=1-1, to=2-1]
    \arrow["{p_{f^*\uE}}", from=1-2, to=2-2]
    \arrow["{s_{\uE}}", shift left=3, from=2-1, to=1-1]
    \arrow["{f^*}", from=2-1, to=2-2]
    \arrow["{s_{f^*\uE}}", shift left=3, from=2-2, to=1-2]
  \end{tikzcd}\]
  Let $w\in H^1(X,\uE)$ such that $f^*w=0$. Then by the diagram above, $f^*s_{\uE}(w)=0$.
  So the $1$-form $s_{\uE}(w)$ vanishes on $U\subset Y$ (regular) dense open subset.
  Let $\gamma$ be a loop in $Y$ which meets $Y\setminus U$ at finitely many points (i.e. a collection of paths on local étale charts).
  Then $\int_{\tilde{\gamma}}s_{\uE}(w)=0$.
  
  However, for $y\in U$, such loops generate $\pi_1(Y,y)$.
  Indeed, $Y$ is étale locally an affine scheme $V$ over $\C$. Such schemes have Whitney stratification,
  and it can be arranged that the stratification extends to a compactification $\overline{V}$ of $V$
  such that $V$ and $V\cap (Y\setminus U)$ are unions of strata \cite[Th. 2.2]{verdierStratificationWhitneyTheoreme1976}.
  A compact Whitney stratified space is also a stratified space \cite[Th. 2.B.1]{thomEnsemblesMorphismesStratifies1969},
  so $\overline{V}$ is a stratified space. Hence $V$ is a stratified space, and any path in a stratified space is homotopic
  to a path meeting non-maximal strata at only finitely many points.

  So, for any $\gamma$ in $\pi_1(Y,y)$, $\int_{\tilde{\gamma}}s_{\uE}(w)=0$, i.e., $s_{\uE}(w)$ is killed by the map
  $Z^1_{dR}(X,\uE)\ra Z^1(\pi_1(X,y),\rho) \ra Z^1(\pi_1(Y,y),\rho)$, where $\rho$ is the monodromy representation of $\uE$ at $y$.
  By \Cref{propositionnonabHurwitz}, this implies that $i^*w=i^*[s_{\uE}(w)]=0$. Hence the desired coincidence of kernels.

  If $\pi_1(Y)\ra\pi_1(X)$ is surjective, then by inflation-restriction \cite[VII, Prop. 4]{serreLocalFields2013}
  $H^1(\pi_1(X,y),\rho) \ra H^1(\pi_1(Y,y),\rho)$ is injective. Hence, by \Cref{propositionnonabHurwitz},
  $H^1(X,\uE)\ra H^1(Y,i^*\uE)$ is injective and, by the above, so is $H^1(X,\uE)\ra H^1(\hat{Y},f^*\uE)$.
\end{proof}


\subsection{The case of some subvarieties of moduli spaces and application to mapping class groups}\label{subcorollaryinjectivity}


In this section we apply \Cref{lemmainjectivity} to some maps between twisted moduli spaces.

\begin{corollary}\label{corollaryinjectivity}
  Let $r\geq 1$ and let $f:\hat{Y}\ra X$ be a map in one of the following collections.
  \begin{enumerate}
    \item $\Mgrb{g-h}{n+1}{r}\lra \Mgrb{g}{n}{r}$, $h\geq 1$, $g\geq 2h+1$, $n\geq 0$.\\
    The map is induced by a choice of point in $\Mgrb{h}{1}{r}$ and the gluing map $\Mgrb{g-h}{n+1}{r}\times\Mgrb{h}{1}{r} \lra \Mgrb{g}{n}{r}$.
    \item $\Mgrb{g}{n+1}{r}\sqcup\Mgrb{g}{n+1}{r}\lra \Mgrb{g}{n+2}{r}$, $g\geq 0$, $n\geq 2$, $2g-2+n>0$.\\
    The first map $\Mgrb{g}{n+1}{r}\ra \Mgrb{g}{n+2}{r}$ has image the divisor where the point marked $1$ meets the point marked $n+1$,
    while the second has image the divisor where the point marked $2$ meets the point marked $n+2$.
    \item $\hat{\partial}\Mgrb{g}{n}{r}\lra\Mgrb{g}{n}{r}$, $2g-2+n>0$ and $\dim \Mgrb{g}{n}{r}\geq 2$.\\
    Here, $\hat{\partial}\Mgrb{g}{n}{r}=\bigsqcup_i\hat{D_i}$ is the normalization of the boundary normal crossing divisor $\partial\Mgrb{g}{n}{r}=\bigcup_iD_i$.
  \end{enumerate}

  Then for any semisimple local system $\uE$ on $X$, $H^1(X,\uE)\ra H^1(\hat{Y},f^*\uE)$ is injective.
\end{corollary}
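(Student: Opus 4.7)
The plan is to apply \Cref{lemmainjectivity} in each of the three cases. For each map $f : \hat{Y} \to X$, I take $Y = f(\hat{Y}) \subset X$ as the image; since $\hat{Y}$ is proper and the induced map $\hat{Y} \to Y$ is generically an isomorphism onto its image, the open-embedding condition of the lemma is satisfied over a dense open subset of $\hat{Y}$. The pullback $f^* \uE$ remains semisimple by \Cref{remarkpullbacksemisimple}. The remaining task, in each case, is to verify the $\pi_1$-surjectivity $\pi_1(Y) \to \pi_1(X)$, which by the lemma then gives the desired injectivity of $H^1(X, \uE) \to H^1(\hat{Y}, f^*\uE)$.

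For case (3), $Y = \partial \Mgrb{g}{n}{r}$ is the entire boundary normal-crossings divisor. The surjectivity follows from the fact that $\pi_1(X) = \Modl{r}{S_g^n}$ is generated by Dehn twists, each of which is realized as the monodromy around a boundary component; under $\dim \Mgrb{g}{n}{r} \geq 2$, these loops can be pushed to live inside $\partial$ itself via the smooth part of the boundary. For case (2), $Y$ is the union of the two collision divisors; each contributes both a copy of $\Modl{r}{S_g^{n+1}}$ (from its intrinsic topology) and, via the root-stack structure of $\Mgrb{g}{n+2}{r}$ along that component, the Dehn twist around the associated separating curve. The hypothesis $n \geq 2$ ensures that these generators together suffice to generate $\Modl{r}{S_g^{n+2}}$ via a standard change-of-coordinates argument for Dehn twist generation of mapping class groups.

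Case (1) is the most delicate. The image $Y$ is a codimension-$(3h-1)$ slice of the separating divisor $D_h$, obtained by fixing a point in $\Mgrb{h}{1}{r}$. The image of $\pi_1(\hat{Y}) \to \pi_1(X)$ captures only mapping classes supported in the outer subsurface $S_{g-h}^{n+1}$, but the full $\pi_1(Y)$ receives additional contributions from the stacky root-stack structure of $X$ along $D_h$, in particular the order-$r$ Dehn twist $T_\gamma$ around the separating curve $\gamma$. The hypothesis $g \geq 2h+1$, equivalent to $g-h \geq h+1$, ensures that $S_{g-h}^{n+1}$ is large enough to contain subsurfaces homeomorphic to $S_h^1$; this, combined with the stacky contributions and the trivialization of $r$-th powers of Dehn twists, enables a careful surface-topological argument in which every generator of $\Modl{r}{S_g^n}$ is expressed in terms of elements in the image of $\pi_1(Y)$.

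The main obstacle is precisely this last step in case (1): verifying that the combination of mapping classes supported in $S_{g-h}^{n+1}$, the monodromy $T_\gamma$, and the other generators contributed by the stackiness of $Y$, exhausts $\Modl{r}{S_g^n}$ under the bound $g \geq 2h+1$. At the level of pure mapping class groups the pointwise stabilizer of a separating curve (together with a Dehn twist around it) is a proper subgroup, so the proof crucially exploits both the quotient by $r$-th powers of Dehn twists and the stacky structure of the twisted moduli space, making this the essential new topological input beyond the general Hodge-theoretic content of \Cref{lemmainjectivity}.
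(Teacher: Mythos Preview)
Your approach in case~(1) has a genuine gap, and the resolution you sketch does not work. You correctly note that the image of $\pi_1(\hat Y)$ in $\pi_1(X)$ only sees mapping classes supported on the one subsurface $S_{g-h}^{n+1}$, and you then try to make up the difference via the root-stack monodromy $T_\gamma$ and the quotient by $r$-th powers. But even after adding $T_\gamma$, the subgroup you obtain is (at best) the stabilizer of the isotopy class of $\gamma$, and this remains a proper subgroup of $\Modl{r}{S_g^n}$ for every $r\geq 1$: any Dehn twist along a curve meeting $\gamma$ essentially lies outside it. The quotient by $r$-th powers of Dehn twists does not collapse this; there are still infinitely many isotopy classes of genus-$h$ separating curves, and nothing in the stacky structure conjugates one to another.

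The paper's argument uses a different mechanism that you are missing. The key point is that the \emph{image} $Y=f(\hat Y)\subset \Mgrb{g}{n}{r}$ is larger than a single slice: a curve in $Y$ has a node splitting off a genus-$h$ piece isomorphic to the fixed point $*$, but the ambient moduli space does not remember which side that piece is on. Concretely, since $g\geq 2h+1$, inside $\hat Y=\Mgrb{g-h}{n+1}{r}$ there is the sublocus where the genus-$(g-h)$ part itself degenerates and splits off a \emph{second} genus-$h$ piece equal to $*$; its image in $Y$ is the locus $\{*\}\times\Mgrb{g-2h}{n+2}{r}\times\{*\}$, and from there one can smooth the \emph{other} node while staying in $Y$. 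Thus $Y$ contains the images of two distinct copies of $\Mgrb{g-h}{n+1}{r}$, corresponding to two overlapping subsurfaces $S',S''\subset S_g^n$ each missing a genus-$h$ handle on opposite ends. Lickorish's generators then show that $\Modu{S'}$ and $\Modu{S''}$ together generate $\Modu{S_g^n}$, giving the required surjectivity of $\pi_1(Y)\to\pi_1(X)$. This is a purely topological phenomenon about the self-intersection of $Y$, independent of $r$ and of any stackiness. Similar ``two overlapping subsurfaces'' arguments, rather than stacky monodromy, also drive cases~(2) and~(3); for~(3) in particular, connectedness of $\partial\Mgrb{g}{n}{r}$ (which you do not address) is the essential input and is supplied by connectivity of the Harer curve complex.
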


\begin{proof}
  To apply \Cref{lemmainjectivity} to $f:\hat{Y}\ra X$, we need to check that $\hat{Y}$ and $X$ are Kähler,
  that $f(\hat{Y})$ is connected, and that $\pi_1(f(\hat{Y}))\ra\pi_1(X)$ is surjective. The twisted moduli spaces are Kähler
  (see the discussion in the proof of \Cref{theoremlifting}).
    
    \textbf{(1)} $\hat{Y}$ is connected, hence $f(\hat{Y})$ is also. Let $*\in \Mgrb{h}{1}{r}$ be the point chosen to define $f$. Then the image of 
    $\{*\}\times \Mgrb{g-2h}{n+2}{r}\times \{*\}\ra \Mgrb{g}{n}{r}$ is covered twice by $f$, in the sense that $f(\hat{Y})$
    is both the image of $\{*\}\times \Mgrb{g-h}{n+2}{r}\ra \Mgrb{g}{n}{r}$ and the image of $\Mgrb{g-2h}{n+2}{r}\times \{*\}\ra \Mgrb{g}{n}{r}$.
    The fundamental group $\pi_1(\Mgrb{g}{n}{r})$ is a quotient of $\Modu{S_g^n}$ and from the above we see that the image of $\pi_1(f(\hat{Y}))$
    in $\pi_1(\Mgrb{g}{n}{r})$ contains the images of both
    $\Modu{S'}$ and $\Modu{S''}$, with $S'$ and $S''$ as below.
    
    \begin{center}
    \def\svgwidth{0.9\linewidth}
\begingroup%
  \makeatletter%
  \providecommand\color[2][]{%
    \errmessage{(Inkscape) Color is used for the text in Inkscape, but the package 'color.sty' is not loaded}%
    \renewcommand\color[2][]{}%
  }%
  \providecommand\transparent[1]{%
    \errmessage{(Inkscape) Transparency is used (non-zero) for the text in Inkscape, but the package 'transparent.sty' is not loaded}%
    \renewcommand\transparent[1]{}%
  }%
  \providecommand\rotatebox[2]{#2}%
  \newcommand*\fsize{\dimexpr\f@size pt\relax}%
  \newcommand*\lineheight[1]{\fontsize{\fsize}{#1\fsize}\selectfont}%
  \ifx\svgwidth\undefined%
    \setlength{\unitlength}{566.51000977bp}%
    \ifx\svgscale\undefined%
      \relax%
    \else%
      \setlength{\unitlength}{\unitlength * \real{\svgscale}}%
    \fi%
  \else%
    \setlength{\unitlength}{\svgwidth}%
  \fi%
  \global\let\svgwidth\undefined%
  \global\let\svgscale\undefined%
  \makeatother%
  \begin{picture}(1,0.3873027)%
    \lineheight{1}%
    \setlength\tabcolsep{0pt}%
    \put(0,0){\includegraphics[width=\unitlength,page=1]{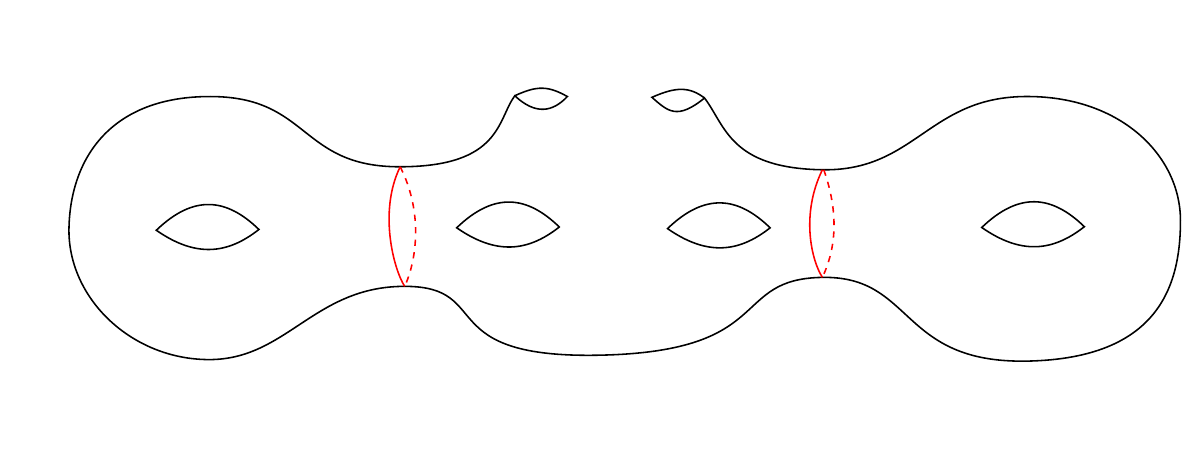}}%
    \put(-0.00120237,0.19594561){\color[rgb]{0,0,0}\makebox(0,0)[lt]{\lineheight{1.25}\smash{\begin{tabular}[t]{l}$S_g^n$\end{tabular}}}}%
    \put(0,0){\includegraphics[width=\unitlength,page=2]{pi1surj_genus.pdf}}%
    \put(0.37059893,0.37507735){\color[rgb]{0,0,0}\makebox(0,0)[lt]{\lineheight{1.25}\smash{\begin{tabular}[t]{l}$S'$\end{tabular}}}}%
    \put(0.65590667,0.00280033){\color[rgb]{0,0,0}\makebox(0,0)[lt]{\lineheight{1.25}\smash{\begin{tabular}[t]{l}$S''$\end{tabular}}}}%
    \put(0,0){\includegraphics[width=\unitlength,page=3]{pi1surj_genus.pdf}}%
  \end{picture}%
\endgroup%

    \end{center}
    
    As $g\geq 3$, we see using Likorish generators \cite[4.13]{farbPrimerMappingClass2011}
    that $\Modu{S'}$ and $\Modu{S''}$ together generate $\Modu{S_g^n}$. Hence $\pi_1(f(\hat{Y}))\ra\pi_1(X)$ is surjective.
    
    \textbf{(2)} The images of the $2$ maps are the divisors where $1$ meets $n+1$ and $2$ meets $n+2$. These $2$ divisors intersect,
    hence $f(\hat{Y})$ is connected. As for the statement about fundamental groups, by an argument similar to that of case (1),
    we are reduced to show that the images of $\Modu{S'}$ and $\Modu{S''}$ together generate $\Modu{S_g^n}$, where $S'$ and $S''$ are as below.

    \begin{center}
    \def\svgwidth{0.8\linewidth}
\begingroup%
  \makeatletter%
  \providecommand\color[2][]{%
    \errmessage{(Inkscape) Color is used for the text in Inkscape, but the package 'color.sty' is not loaded}%
    \renewcommand\color[2][]{}%
  }%
  \providecommand\transparent[1]{%
    \errmessage{(Inkscape) Transparency is used (non-zero) for the text in Inkscape, but the package 'transparent.sty' is not loaded}%
    \renewcommand\transparent[1]{}%
  }%
  \providecommand\rotatebox[2]{#2}%
  \newcommand*\fsize{\dimexpr\f@size pt\relax}%
  \newcommand*\lineheight[1]{\fontsize{\fsize}{#1\fsize}\selectfont}%
  \ifx\svgwidth\undefined%
    \setlength{\unitlength}{345.69376788bp}%
    \ifx\svgscale\undefined%
      \relax%
    \else%
      \setlength{\unitlength}{\unitlength * \real{\svgscale}}%
    \fi%
  \else%
    \setlength{\unitlength}{\svgwidth}%
  \fi%
  \global\let\svgwidth\undefined%
  \global\let\svgscale\undefined%
  \makeatother%
  \begin{picture}(1,0.50452413)%
    \lineheight{1}%
    \setlength\tabcolsep{0pt}%
    \put(0,0){\includegraphics[width=\unitlength,page=1]{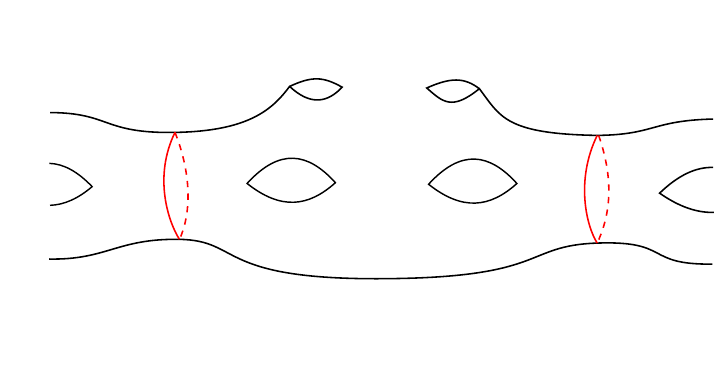}}%
    \put(-0.00197039,0.2386654){\color[rgb]{0,0,0}\makebox(0,0)[lt]{\lineheight{1.25}\smash{\begin{tabular}[t]{l}$S_g^n$\end{tabular}}}}%
    \put(0,0){\includegraphics[width=\unitlength,page=2]{pi1surj_boundary.pdf}}%
    \put(0.4294205,0.48448969){\color[rgb]{0,0,0}\makebox(0,0)[lt]{\lineheight{1.25}\smash{\begin{tabular}[t]{l}$S'$\end{tabular}}}}%
    \put(0.6062532,0.00458913){\color[rgb]{0,0,0}\makebox(0,0)[lt]{\lineheight{1.25}\smash{\begin{tabular}[t]{l}$S''$\end{tabular}}}}%
    \put(0,0){\includegraphics[width=\unitlength,page=3]{pi1surj_boundary.pdf}}%
  \end{picture}%
\endgroup%

    \end{center}

    Again, this can be checked using generators (see \cite[4.4.4]{farbPrimerMappingClass2011}).
    
    \textbf{(3)} For this case, we will use connectivity of the Harer complex of curves \cite{harerVirtualCohomologicalDimension1986},
    following Boggi's paper \cite[§3]{boggiProfiniteTeichmuellerTheory2006}.\footnote{Although \cite{boggiProfiniteTeichmuellerTheory2006}
    is known to contain a mistake in the later sections, the content of section 3 is correct.}
    By the root stack construction and excision, we have
    \begin{align*}
      \pi_1(\Mgrp{g}{n}{r},\partial\Mgrp{g}{n}{r})&\simeq\pi_1(\oMg{g}{n},\partial\oMg{g}{n})
                                                  \simeq\pi_1(\hatMg{g}{n},\partial\hatMg{g}{n})\\
                                                  &\simeq\pi_1(\widehat{T}_{g,n},\partial \widehat{T}_{g,n})
                                                  \simeq\pi_{0}(\partial \widehat{T}_{g,n})
    \end{align*}
    where $\hatMg{g}{n}$ is the real blow up of $\oMg{g}{n}$ along the components of the boundary,
    $\widehat{T}_{g,n}$ is the universal cover of $\hatMg{g}{n}$, which is homotopy equivalent to Teichmüller space, hence contractible,
    and $\partial \widehat{T}_{g,n}$ is the pullback of $\partial\hatMg{g}{n}$ to $\widehat{T}_{g,n}$
    and has the homotopy type of the Harer curve complex (see \cite[§3]{boggiProfiniteTeichmuellerTheory2006}).
    Now, the Harer complex of curves is connected whenever $\dim \Mgrb{g}{n}{r}\geq 2$
    (see \cite[Th. 4.1]{harerVirtualCohomologicalDimension1986} or \cite[Th. 4.3]{farbPrimerMappingClass2011}).
    Hence in these cases, $\pi_1(\Mgrp{g}{n}{r},\partial\Mgrp{g}{n}{r})$ is trivial, so that by the long exact sequence of homotopy groups
    $\partial\Mgrp{g}{n}{r}$ is connected and $\pi_1(\partial\Mgrp{g}{n}{r})\ra \pi_1(\Mgrp{g}{n}{r})$ is surjective.
    Now $\Mgrb{g}{n}{r}$ is a $\mu_r^n$-gerbe over $\Mgrp{g}{n}{r}$, so the same results hold for $\Mgrb{g}{n}{r}$.
\end{proof}


\subsection{Digression: application to the Ivanov conjecture and other rigidity conjectures about mapping class groups}\label{subdigressionIvanov}


In this subsection, we explain how \Cref{corollaryinjectivity} can be applied to relate the Ivanov conjectures about mapping class groups
$\Modu{S_g^n}$, $g\geq 3$, and some of its generalizations, for different values of $g$ and $n$.
In particular, this reduces, for each of these conjectures, the cases $g\geq 3$, $n\geq 0$ to the single case $g=3$, $n=3$.

\begin{conjecture}[{Ivanov conjecture, \cite[7.]{ivanovFifteenProblemsMapping2006}}]\label{conjectureIvanov}
  Let $g\geq 3$, $n\geq 0$, and let $\Gamma\subset\Modu{S_g^n}$ be a finite index subgroup.
  Then $\Gamma^\mathrm{ab}=H^1(\Gamma,\Z)$ is finite.
\end{conjecture}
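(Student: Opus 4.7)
The displayed statement is the classical Ivanov conjecture, currently open. Any realistic proof plan is therefore not a direct attack but a reduction strategy, and I would follow the one the preceding subsection is evidently building toward in \Cref{corollaryconjecturesreducetothreethree}: reduce the claim for arbitrary $(g,n)$ with $g\geq 3$ to the single case $(3,3)$ using \Cref{corollaryinjectivity}.

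The first step is to recast the conjecture cohomologically. Finiteness of $\Gamma^{\mathrm{ab}}$ for every finite-index $\Gamma\subset\Modu{S_g^n}$ is equivalent, by a standard argument, to $H^1(\Modu{S_g^n},\rho)=0$ for every finite-image representation $\rho$; and via the inclusion $\rho\hookrightarrow\ad(\rho\oplus\mathbf{1})$ this is in turn equivalent to $H^1(\Modu{S_g^n},\ad\rho)=0$ for all such $\rho$. By \Cref{propositionfiniteorderdeformation} and \Cref{remarkboundary}, after passing to a suitable level $r$, this becomes the assertion that $H^1(\Mgrb{g}{n}{r},\ad\uE)=0$ for the corresponding finite-monodromy (hence unitary, hence semisimple) local system $\uE$ on the twisted moduli space, placing us exactly in the setting of \Cref{corollaryinjectivity}.

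The three reduction implications of \Cref{corollaryconjecturesreducetothreethree} then drop out of \Cref{corollaryinjectivity} together with classical inflation--restriction. Case (1) with $h=1$ gives $H^1(\Mgrb{g+1}{n}{r},\ad\uE)\hookrightarrow H^1(\Mgrb{g}{n+1}{r},f^*\ad\uE)$; since the pullback of a finite-image local system is again of the same type, hypothesis $(\mathrm{I})_{g,n+1}$ kills the right-hand side and hence the left, yielding $(\mathrm{I})_{g,n+1}\Rightarrow(\mathrm{I})_{g+1,n}$. The forgetful map induces a surjection $\Modu{S_g^{n+1}}\twoheadrightarrow\Modu{S_g^n}$ (point-pushing exact sequence), so inflation--restriction gives $H^1(\Modu{S_g^n},\ad\rho)\hookrightarrow H^1(\Modu{S_g^{n+1}},\ad\rho)$ and hence $(\mathrm{I})_{g,n+1}\Rightarrow(\mathrm{I})_{g,n}$. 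Finally, case (3) of \Cref{corollaryinjectivity} injects $H^1$ of $\Mgrb{g}{n+1}{r}$ into the $H^1$ of its normalized boundary divisor; that boundary decomposes, via case (2) for marked-point collisions together with the separating and non-separating nodal strata, into products of twisted moduli spaces of strictly smaller Euler complexity. Provided every such factor falls within the scope of the hypothesis $(\mathrm{I})_{g,n}$---a condition which forces the lower bound $n\geq 3$---K\"unneth plus the hypothesis annihilate all boundary contributions, whence $(\mathrm{I})_{g,n}\Rightarrow(\mathrm{I})_{g,n+1}$. Iterating these three implications telescopes any $(g,n)$ with $g\geq 3$ down to $(3,3)$.

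The main obstacle is of course the base case $(\mathrm{I})_{3,3}$, which is genuinely open. None of the currently available tools---the Johnson filtration, the study of abelian quotients of Torelli-type subgroups, arithmetic or Galois-theoretic attacks through the symplectic and Prym representations, or computer-assisted spectral-gap certification of the type that worked for $\mathrm{Aut}(F_n)$---has settled it, and this is essentially the same bottleneck obstructing the finite-dimensional property (T) conjecture for mapping class groups.
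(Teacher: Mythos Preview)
You correctly recognize that this is an open conjecture and that the relevant content here is the reduction strategy of \Cref{corollaryconjecturesreducetothreethree}. Your cohomological reformulation and your first two implications agree with the paper: $(\mathrm{I})_{g,n+1}\Rightarrow(\mathrm{I})_{g+1,n}$ via case~(1) of \Cref{corollaryinjectivity}, and $(\mathrm{I})_{g,n+1}\Rightarrow(\mathrm{I})_{g,n}$ via the Birman sequence and inflation--restriction.

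Your argument for the third implication $(\mathrm{I})_{g,n}\Rightarrow(\mathrm{I})_{g,n+1}$, however, has a gap. You invoke case~(3) of \Cref{corollaryinjectivity} to inject $H^1(\Mgrb{g}{n+1}{r},\uE)$ into the $H^1$ of the full normalized boundary and then assert that ``every such factor falls within the scope of the hypothesis $(\mathrm{I})_{g,n}$''. This is not the case: the non-separating nodal stratum has normalization $\Mgrb{g-1}{n+3}{r}$, of genus $g-1$, and the separating strata contribute factors of every genus $0\leq g_i\leq g$. None of these are covered by the single hypothesis $(\mathrm{I})_{g,n}$, and for $g=3$ the non-separating stratum already drops to genus~$2$, where the conclusion of the conjecture is known to fail. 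No choice of the bound on $n$ repairs this.

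The paper instead uses case~(2) of \Cref{corollaryinjectivity} directly (via \Cref{propositionrestrictions}), not case~(3). Case~(2) singles out two specific collision divisors in $\Mgrb{g}{n+1}{r}$, each with normalization $\Mgrb{g}{n}{r}$ of the \emph{same} genus $g$, and the substance of that case is precisely that their union is already $\pi_1$-surjective, so \Cref{lemmainjectivity} applies without ever touching the rest of the boundary. The constraint $n\geq 3$ then comes from the numerical hypothesis in case~(2), not from any attempt to control lower-genus strata.
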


\begin{notation}
  We will denote by $(\mathrm{I})_g^n$ the statement of the Ivanov conjecture for $\Modu{S_g^n}$.
\end{notation}

This conjecture has been extensively studied.
For example, in 2017, Ershov-He proved that the finite-index subgroups of $\Modu{S_g^1}$, $g\geq 12$,
that contain a term of the Johnson filtration have a finite abelianization \cite{ershovFinitenessPropertiesJohnson2018}.
Their result extends those of McCarthy \cite{mccarthyFirstCohomologyGroup2001}, Hain \cite{hainTorelliGroupsGeometry1996},
and Putman \cite{putmanNoteAbelianizationsFiniteindex2009} on this question. In 2011, Putman and Wieland \cite{putmanAbelianQuotientsSubgroups2013} showed that Ivanov's conjecture
is essentially equivalent to another conjecture, now called the \emph{Putman-Wieland conjecture}, which concerns the higher Prym representations
of mapping class groups.

Given $\Gamma$ as in the conjecture, $H^1(\Gamma,\Z)$ is finite if and only if $H^1(\Gamma,\C)=0$,
which is in turn equivalent to $H^1(\Modu{S_g^n},\C[\Modu{S_g^n}/\Gamma])=0$. Now notice that the representations
that are direct summands of some $\C[\Modu{S_g^n}/\Gamma]^{\oplus m}$ for some $\Gamma$ of finite index and some $m\geq 0$
are exactly the finite image representations of $\Modu{S_g^n}$. Hence the Ivanov conjecture for $\Modu{S_g^n}$
is equivalent to the following.

\begin{conjecture*}[{\ref{conjectureIvanov}}, Ivanov conjecture, reformulation]
  Let $g\geq 3$, $n\geq 0$, then for any \emph{finite image} complex linear representation $\rho$ of $\Modu{S_g^n}$, $H^1(\Modu{S_g^n},\rho)=0$.
\end{conjecture*}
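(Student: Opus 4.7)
The plan is to show that the reformulation is equivalent to the original Ivanov conjecture by chaining together three standard moves: a universal-coefficients argument to pass from $\Gamma^{\mathrm{ab}}$ to $H^1(\Gamma,\C)$, Shapiro's lemma to move between cohomology of $\Gamma$ and cohomology of $\Modu{S_g^n}$ with induced coefficients, and complete reducibility for finite groups to extract arbitrary finite-image representations as summands of the regular representation.

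First I would observe that since $\Modu{S_g^n}$ is finitely generated, any finite-index subgroup $\Gamma\subset\Modu{S_g^n}$ is also finitely generated (by a Schreier-type argument). Hence $\Gamma^{\mathrm{ab}}$ is a finitely generated abelian group, and so $\Gamma^{\mathrm{ab}}$ is finite if and only if $\Gamma^{\mathrm{ab}}\otimes\C=H^1(\Gamma,\C)^*$ vanishes, i.e.\ $H^1(\Gamma,\C)=0$. Next, Shapiro's lemma (applicable because $\Gamma$ has finite index) gives a canonical isomorphism $H^1(\Gamma,\C)\simeq H^1(\Modu{S_g^n},\C[\Modu{S_g^n}/\Gamma])$, where the right-hand side uses the $\Modu{S_g^n}$-action on cosets. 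Thus the original Ivanov conjecture is equivalent to the statement that $H^1(\Modu{S_g^n},\C[\Modu{S_g^n}/\Gamma])=0$ for every finite-index $\Gamma$.

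It then remains to compare this coset-representation class with the class of all finite-image representations. Given a finite-image representation $\rho$, set $\Gamma:=\ker\rho$, which has finite index, and let $G:=\Modu{S_g^n}/\Gamma$, so $\rho$ factors through a representation of the finite group $G$. By Maschke's theorem, every complex representation of $G$ is a direct summand of $\C[G]^{\oplus m}$ for some $m$, and $\C[G]$ is precisely $\C[\Modu{S_g^n}/\Gamma]$ as a $\Modu{S_g^n}$-representation. Since $H^1(\Modu{S_g^n},-)$ commutes with direct sums and direct summands, vanishing for the coset representations implies vanishing for all finite-image $\rho$, and the converse is immediate (take $\rho=\C[\Modu{S_g^n}/\Gamma]$, which has finite image). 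Chaining the three equivalences gives the reformulation.

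There is no serious obstacle here: the argument is entirely formal once the three ingredients (finite generation of finite-index subgroups, Shapiro's lemma, Maschke's theorem) are in place. The only mildly delicate point is the finite-generation of $\Gamma$, needed to conclude that $H^1(\Gamma,\C)=0$ forces $\Gamma^{\mathrm{ab}}$ to be finite rather than merely a torsion group; this follows from the known finite generation of $\Modu{S_g^n}$, which is itself a classical theorem.
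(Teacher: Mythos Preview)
Your proposal is correct and follows essentially the same approach as the paper: pass from $\Gamma^{\mathrm{ab}}$ finite to $H^1(\Gamma,\C)=0$, apply Shapiro's lemma to rewrite this as $H^1(\Modu{S_g^n},\C[\Modu{S_g^n}/\Gamma])=0$, and use Maschke's theorem to identify the finite-image representations with direct summands of powers of coset representations. The paper's argument is terser but identical in substance.
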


This form of the Ivanov conjecture makes clear its relation to the property (T) conjecture for $\Modu{S_g^n}$, $g\geq 3$, $n\geq 0$
(question 8 in \cite{ivanovFifteenProblemsMapping2006}).
This conjecture says that for any unitary representation $\rho$ of $\Modu{S_g^n}$ in a Hilbert space, $H^1(\Modu{S_g^n},\rho)=0$.
As any finite image representation is unitary, so the property (T) conjecture would imply the Ivanov conjecture.
It would also imply rigidity of all quantum representations in genus at least $3$ which are unitary up to Galois conjugation.
This includes $\SO$ and $\SU$ quantum representations (see \cite[5.1.2, 5.1.3]{deroinToledoInvariantsTopological2022} for example),
but also quantum representations coming from the modular functors associated to simple Lie algebras
(see \cite[3.3]{bakalovLecturesTensorCategories2000} and \cite{kirillovInnerProductModular1996,kirillovInnerProductModular1998,wenzlTensorCategoriesQuantum1998}).

This motivates the statement of an intermediate ``finite dimensional property (T)'' conjecture that would cover
these cases. 

\begin{conjecture}\label{conjecturefT}
  Let $g\geq 3$, $n\geq 0$, then for any \emph{finite dimensional} unitary representation $\rho$ of $\Modu{S_g^n}$, $H^1(\Modu{S_g^n},\rho)=0$.
\end{conjecture}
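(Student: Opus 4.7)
The plan is to reduce to the stronger conjecture $(\mathrm{fD})_g^n$ on finite-twists representations. As the paper promises in the discussion preceding \Cref{conjecturefD}, every finite-dimensional unitary representation of $\Modu{S_g^n}$ with $g \geq 3$ is in fact a finite-twists representation: this follows from classical relations in the mapping class group which, combined with unitarity (eigenvalues on the unit circle) and the conjugacy of Dehn twists of a fixed topological type, force the image of every Dehn twist to have finite order. Given this reduction, it suffices to prove $(\mathrm{fD})_g^n$ for all $g \geq 3$, $n \geq 0$.

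By \Cref{corollaryconjecturesreducetothreethree}, the conjecture $(\mathrm{fD})_g^n$ for all $g \geq 3$ and $n \geq 0$ reduces to the single case $(g,n)=(3,3)$. For this case, the natural strategy is to adapt the scheme of the main theorem: one semisimplifies $\rho$ (legitimate for finite-twists representations by \cite{godfardSemisimplicityConformalBlocks2025a}), realizes each simple summand as a constituent of a modular functor $\Nu$, applies Ocneanu rigidity to force infinitesimal deformations within the modular-functor context to vanish, and finally lifts any class $\psi \in H^1(\Modu{S_3^3},\ad \rho)$ to a harmonic representative via non-Abelian Hodge theory on $\Mgrb{3}{3}{r}$, as in \Cref{theoremuniversalsectionofH1}. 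Functoriality of the harmonic section then promotes this to a deformation of the whole modular functor, which Ocneanu rigidity kills.

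The main obstacle is structural: unlike the $\SU$ and $\SO$ quantum cases of the main theorem, there is no a priori reason that an arbitrary simple finite-twists representation of a mapping class group embeds into a modular fusion category. Without such an embedding, Ocneanu rigidity is unavailable and the whole deformation-theoretic machinery fails to apply. One would need either a classification theorem, asserting that every simple finite-twists representation of $\Modu{S_g^n}$ arises as a constituent of a modular functor, or an entirely different rigidity principle tailored to finite-twists representations in general. A complementary, purely group-theoretic route would be to verify Kazhdan's property (T) directly for the single group $\Modu{S_3^3}$, for instance by a semidefinite-programming computation along the lines of the Kaluba--Nowak--Ozawa and Kaluba--Kielak--Nowak proofs for $\mathrm{Aut}(F_n)$; this would give $(\mathrm{T})_{3,3}$ and hence, via the reduction scheme, $(\mathrm{fT})_g^n$ for all $g \geq 3$, $n \geq 0$.
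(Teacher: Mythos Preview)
The statement you are asked to prove is a \emph{conjecture}, not a theorem; the paper does not prove it and explicitly labels it as open. There is therefore no ``paper's own proof'' to compare against. Your proposal is also not a proof: you correctly identify the reduction to $(\mathrm{fD})_{3,3}$ via \Cref{corollaryconjecturesreducetothreethree}, but then acknowledge yourself that the remaining step---either embedding an arbitrary simple finite-twists representation into a modular functor, or verifying property (T) for $\Modu{S_3^3}$ directly---is the entire content of the problem and is not known.

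A few specific issues in your sketch: the claim that finite-twists representations are automatically semisimple by \cite{godfardSemisimplicityConformalBlocks2025a} is misattributed---that reference proves semisimplicity for conformal blocks of modular functors, not for arbitrary finite-twists representations (indeed, semisimplicity of all finite-twists representations would itself be a consequence of $(\mathrm{fD})_g^n$, as the paper notes). Also, Ocneanu rigidity constrains deformations of modular fusion categories, not deformations of arbitrary local systems, so even if a given representation happened to arise from a modular functor, a generic deformation of that single representation need not be compatible with the modular-functor structure; this is precisely the difficulty the paper overcomes in the $\SU$/$\SO$ case by exploiting irreducibility and property \textbf{(II)} to propagate a single deformation class into a full compatible collection. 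For a general representation one has no such mechanism.
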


\begin{notation}
  We will denote by $(\mathrm{fT})_g^n$ the above statement of this ``finite dimensional property (T)'' conjecture for $\Modu{S_g^n}$.
\end{notation}

This last conjecture is related, in the case of adjoints of irreducible representations, to the general conjecture \cite[Conj. 6.1.2]{littMotivesMappingClass2024}
of Daniel Litt, which says that irreducible representations of $\Modu{S_{g,n}}$ for $g\geq 3$ should be rigid.
Motivated by this conjecture of Litt and \Cref{propositionrestrictions} below, we now state a last conjecture,
about representations sending Dehn twists to finite order elements, which we will call \emph{finite Dehn twist representation}.
Note that in genus at least $3$, this is equivalent to the Dehn twists being sent to diagonalizable elements,
as then linear representations of mapping class groups send Dehn twists to quasi-unipotent elements \cite[2.5]{aramayonaRigidityPhenomenaMapping2016}.
In particular, for $g\geq 3$ unitary representations are finite Dehn twist representations.

\begin{conjecture}\label{conjecturefD}
  For any $g\geq 3$, $n\geq 0$ and semisimple finite Dehn twist representation $\rho$ of $\Modu{S_g^n}$, $H^1(\Modu{S_{g,n}},\rho)=0$.
\end{conjecture}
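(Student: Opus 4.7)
The plan is to translate the statement into the geometric setting of twisted moduli spaces and then reduce to a single low-complexity base case using the partial stability machinery developed in this section. Since $\rho$ is a semisimple finite Dehn twist representation, there is some $r\geq 1$ with $\rho(T_\gamma^r)=1$ for every Dehn twist $T_\gamma$, so $\rho$ factors through the quotient $\Modl{r}{S_g^n}$, which by \Cref{remarkboundary} is the fundamental group of the proper Kähler smooth Deligne--Mumford stack $\Mgrb{g}{n}{r}$. Let $\uE$ denote the corresponding semisimple local system. By \Cref{propositionfiniteorderdeformation}, proving $(\mathrm{fD})_g^n$ is equivalent to showing $H^1(\Mgrb{g}{n}{r},\uE)=0$.

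Next, I would iterate \Cref{corollaryinjectivity} to propagate the conjecture across $(g,n)$. Case (1), applied with $h=1$ and combined with the fact (\Cref{remarkpullbacksemisimple}) that pullbacks of semisimple local systems along algebraic maps of proper smooth Kähler DM stacks remain semisimple, should yield the implication $(\mathrm{fD})_{g,n+1}\Rightarrow (\mathrm{fD})_{g+1,n}$: the pullback of $\uE$ along the gluing map splits off direct summands whose $H^0$ is controlled by semisimplicity, and the injectivity on $H^1$ lets one transfer the vanishing. Case (2) and the forgetful map should similarly give $(\mathrm{fD})_{g,n+1}\Rightarrow(\mathrm{fD})_{g,n}$ and, for $n\geq 3$, $(\mathrm{fD})_{g,n}\Rightarrow(\mathrm{fD})_{g,n+1}$. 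Chaining these reduces the entire family $\{(\mathrm{fD})_g^n\}_{g\geq 3,n\geq 0}$ to the single case $(g,n)=(3,3)$, as announced in \Cref{corollaryconjecturesreducetothreethree}.

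The hard part will be establishing the base case $(\mathrm{fD})_{3,3}$, for which the homological stability reductions offer no further traction. Morally, this is a finite-dimensional, semisimple, finite-twist avatar of property (T) for $\Modu{S_3^3}$, and is genuinely open. Two plausible lines of attack come to mind. The first is a computer-assisted proof exploiting an explicit finite presentation of $\Modu{S_3^3}$ and semidefinite programming to certify a Kazhdan-type constant, in the spirit of Kaluba--Nowak--Ozawa's proof for $\mathrm{Aut}(F_5)$ \cite{kaluba$textAutmathbbF_5$2019}; even a partial certificate restricted to $r$-torsion Dehn twist representations would suffice. The second is structural: attempt to embed every semisimple finite Dehn twist representation of $\Modu{S_3^3}$ into a coherent fusion-type collection of representations of all $\Modu{S_h^n}$, so that Ocneanu rigidity (\Cref{Ocneanurigidity}) combined with the Hodge-theoretic lifting of \Cref{theoremlifting} could be invoked exactly as in \Cref{sectionproof}. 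The main obstacle to the second strategy is that there is no known modular-functor-like structure accommodating arbitrary semisimple finite Dehn twist representations, and constructing one would itself require a substantial new idea, likely involving some form of classification or Tannakian reconstruction for such representations.
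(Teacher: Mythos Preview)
The statement you were asked to prove is \Cref{conjecturefD}, which is a \emph{conjecture} in the paper, not a theorem. The paper does not provide a proof of it, and indeed explicitly presents it as open. You have correctly recognized this: your proposal is not a proof but an outline of a strategy, and you rightly identify the base case $(\mathrm{fD})_{3,3}$ as ``genuinely open.''

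Your reduction to the base case is accurate in spirit and matches what the paper actually proves as \Cref{corollaryconjecturesreducetothreethree}. A few minor points of attribution: the identification $H^1(\Modu{S_g^n},\rho)=H^1(\Modl{r}{S_g^n},\rho)$ for general $\rho$ is \Cref{propositionrirreleventinH1}, not \Cref{propositionfiniteorderdeformation} (the latter concerns $\ad\rho$ specifically); and the implication $(\mathrm{fD})_{g,n+1}\Rightarrow(\mathrm{fD})_{g,n}$ is obtained in the paper via the Birman exact sequence and inflation-restriction, not via case (2) of \Cref{corollaryinjectivity}. Your two speculative attacks on the base case are reasonable but go beyond anything the paper claims; in particular, the second would require embedding an arbitrary semisimple finite Dehn twist representation into a modular-functor-like structure, which, as you note, is not known and would be a major advance in its own right.

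In short: there is no gap in your reasoning precisely because you do not claim to have a proof. The reductions you describe are correct and coincide with the paper's \Cref{corollaryconjecturesreducetothreethree}, but the conjecture itself remains open.
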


\begin{notation}
  We will denote by $(\mathrm{fD})_g^n$ the above statement of this conjecture for $\Modu{S_g^n}$.
\end{notation}

\begin{remark}
  As mentioned above, for any $g\geq 3$, $n\geq 0$, $(\mathrm{fD})_g^n\Rightarrow (\mathrm{fT})_g^n \Rightarrow (\mathrm{I})_g^n$.
  Note also that $(\mathrm{fD})_g^n$ would imply that any finite Dehn twist representation of $\Modu{S_g^n}$ is semisimple,
  as extensions of semisimple representations are classified by $H^1$ groups which would then vanish.
\end{remark}

To be able to apply \Cref{corollaryinjectivity} to the $3$ conjectures above, we need to compare, for $\rho$ a representation of $\Modl{r}{S_g^n}$,
the cohomology groups $H^1(\Modl{r}{S_g^n},\rho)$ and $H^1(\Modu{S_g^n},\rho)$. This is done in the Proposition below.
For $\rho$ of finite image, the Proposition was observed by Putman in \cite[Theorem A]{putmanNoteAbelianizationsFiniteindex2009}.
The intuition behind the equality of the $2$ cohomology groups is more clear when one considers coefficients in $\ad\rho$.
Then the groups classify deformations, and representations of $\Modu{S_g^n}$ for $g\geq 3$ always send Dehn twists to quasi-unipotent matrices,
see \cite[2.5]{aramayonaRigidityPhenomenaMapping2016}.

\begin{proposition}\label{propositionrirreleventinH1}
  Let $g\geq 3$, $n\geq 0$, $r\geq 1$, and let $\rho:\Modl{r}{S_g^n}\ra \GLn{d}{\C}$ be a representation.
  Then $H^1(\Modl{r}{S_g^n},\rho)=H^1(\Modu{S_g^n},\rho)$.
\end{proposition}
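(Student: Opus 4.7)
The plan is to use the Lyndon--Hochschild--Serre spectral sequence for the extension
\[1 \to N \to \Modu{S_g^n} \to \Modl{r}{S_g^n} \to 1,\]
where $N = \langle T_\gamma^r \mid \gamma\rangle$ is the normal subgroup generated by $r$-th powers of Dehn twists. Since $\rho$ factors through the quotient, $N$ acts trivially on $\rho$ and $H^1(N, \rho) = \mathrm{Hom}(N^{\mathrm{ab}}, \rho)$. The five-term exact sequence (with inflation injective as always) then reduces the proposition to showing that the restriction map $H^1(\Modu{S_g^n}, \rho) \to \mathrm{Hom}(N^{\mathrm{ab}}, \rho)^{\Modl{r}{S_g^n}}$ vanishes. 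Since coboundaries are zero on $N$ (the action being trivial there), the restriction $c\mapsto c|_N$ is well-defined on cohomology classes; by $\Modu{S_g^n}$-equivariance of $c|_N$ and the fact that $N$ is normally generated by the $T_\gamma^r$, it is enough to show $c(T_\gamma^r) = 0$ for every cocycle $c$ and every Dehn twist $T_\gamma$.

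Next, I would apply the cocycle identity iteratively to obtain $c(T_\gamma^r) = \bigl(\sum_{i=0}^{r-1} \rho(T_\gamma)^i\bigr) c(T_\gamma)$. As $\rho(T_\gamma)^r = 1$, the operator $\rho(T_\gamma)$ is diagonalizable with eigenvalues in $\mu_r$, and the sum equals $r\,P_\gamma$, where $P_\gamma$ is the projector onto the fixed subspace of $\rho(T_\gamma)$. Thus the claim reduces to $P_\gamma c(T_\gamma) = 0$. A centralizer argument supplies partial information: for every $g \in \Modu{S_g^n}$ stabilizing $\gamma$, the relation $[g, T_\gamma] = 1$ combined with the cocycle identity yields $(\rho(g) - 1)c(T_\gamma) = (\rho(T_\gamma) - 1)c(g)$. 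Applying $P_\gamma$, which commutes with $\rho(g)$, annihilates the right-hand side, so $P_\gamma c(T_\gamma)$ is fixed by $\rho(\mathrm{Stab}(\gamma))$.

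The main obstacle is upgrading this stabilizer-invariance to actual vanishing, and this is where the hypothesis $g \geq 3$ enters decisively. The plan is to combine it with additional mapping class group relations among Dehn twists --- in particular, the lantern relation, which expresses a product of pairwise commuting boundary Dehn twists as a product of three disjoint interior ones, and the perfectness of $\Modu{S_g^n}$ for $g \geq 3$ (placing every Dehn twist in the commutator subgroup, which yields further constraints on cocycle values). Together with the conjugacy of non-separating Dehn twists in $\Modu{S_g^n}$, these relations can be combined to force $P_\gamma c(T_\gamma) = 0$ for $\gamma$ non-separating, and a reduction to the non-separating case (via expressions of separating Dehn twists through the lantern relation) handles the remaining curves. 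Carrying out this algebraic step uniformly in $\gamma$, while controlling which Dehn twist relations are actually needed, is the heart of the argument.
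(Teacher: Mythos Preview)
Your reduction via the five-term exact sequence is correct, and so is the computation $c(T_\gamma^r)=rP_\gamma c(T_\gamma)$ together with the observation that $v_\gamma:=P_\gamma c(T_\gamma)$ is fixed by $\mathrm{Stab}(\gamma)$. Up to this point your argument agrees with the paper's: the paper phrases the goal as vanishing of the restriction $H^1(\Modu{S_g^n},\rho)\to H^1(T_\gamma^{\Z},\rho)$, and since $\rho(T_\gamma)^r=1$ this restriction is exactly $[c]\mapsto v_\gamma\in\rho^{T_\gamma}$.

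The gap is your step 5, which you identify as the heart of the argument but do not carry out. The paper's method here is quite different from what you sketch. It does not use lantern relations or the explicit perfectness argument. Instead it passes to the subsurface $S_\gamma$ obtained by cutting along $\gamma$ (chosen to have genus $\geq 2$) and observes that the inclusion $\Modl{r}{S_\gamma}\to\Modl{r}{S_g^n}$ is realised by an algebraic map between smooth proper Deligne--Mumford stacks; by non-Abelian Hodge theory (pullback of a pluri-harmonic metric is pluri-harmonic), the restriction $\rho|_{\Modu{S_\gamma}}$ is therefore \emph{semisimple}. This yields a $\Modu{S_\gamma}$-splitting $\rho=\rho^{\mathrm{triv}}\oplus\rho'$, where $\rho^{\mathrm{triv}}$ is the invariants. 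On $\rho^{\mathrm{triv}}$ the map to $H^1(T_\gamma^{\Z},\rho^{\mathrm{triv}})$ factors through $H^1(\Modu{S_\gamma},\rho^{\mathrm{triv}})=0$ (finite abelianisation in genus $\geq 2$). On $\rho'$ one uses that $T_\gamma$ is central in $\Modu{S_\gamma}$ and acts semisimply, together with $H^0(\Modu{S_\gamma},\rho')=0$, to kill the remaining piece via a K\"unneth-type factorisation through $\Modu{S_\gamma}\times T_\gamma^{\Z}$.

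Your proposed combinatorial route does not obviously close this gap. Knowing $v_\gamma\in\rho^{\mathrm{Stab}(\gamma)}$ and the equivariance $v_{h\gamma}=\rho(h)v_\gamma$ (from conjugacy) does not by itself force $v_\gamma=0$: without the splitting off of the invariants, which is precisely what semisimplicity of the restriction provides, there is no evident mechanism. Applying a cocycle to the lantern relation mixes values at seven curves through the non-commuting operators $\rho(T_{a_j})$, and extracting the vanishing of a single $v_\gamma$ from it is not straightforward. So the missing input is not a purely group-theoretic identity in $\Modu{S_g^n}$ but the Hodge-theoretic semisimplicity of the restricted representation.
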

\begin{proof}
  By inflation-restriction \cite[VII, Prop. 4]{serreLocalFields2013}, we
  need only show that for any $\gamma$ simple closed curve on $S_g^n$ with corresponding Dehn twist $T_\gamma$,
  the map $H^1(\Modu{S_g^n},\rho)\ra H^1(T_\gamma^\Z,\rho)$ vanishes, where $T_\gamma^\Z$ is the subgroup generated by $T_\gamma$.
  Let $S_\gamma$ be a connected component of the compact surface obtained
  by cutting $S_g^n$ along $\gamma$. When $\gamma$ is separating, we may assume that we chose the component
  $S_\gamma$ so that it has genus at least $2$.
  
  The map $\Modl{r}{S_\gamma}\ra\Modl{r}{S_g^n}$ is represented by an algebraic map between $r$-twisted moduli space,
  and applying \Cref{remarkpullbacksemisimple} to this map and $\rho$, we see that the restriction of $\rho$ to $\Modu{S_\gamma}$ is semisimple.
  Hence the subspace $\rho^\mathrm{triv}\subset \rho$ of $\Modu{S_\gamma}$-fixed points
  admits a direct complement $\rho'\subset\rho$ for the action of $\Modu{S_\gamma}$.
  Then, as $S_\gamma$ has genus at least $2$, $\Modu{S_\gamma}$ has finite abelianization \cite[5.1.2, 5.1.3]{farbPrimerMappingClass2011},
  and hence $H^1(\Modu{S_\gamma},\rho^\mathrm{triv})=0$.
  Now $H^1(\Modu{S_g^n},\rho)\ra H^1(T_\gamma^\Z,\rho^\mathrm{triv})$ factors as
  \begin{equation*}
    H^1(\Modu{S_g^n},\rho)\ra H^1(\Modu{S_\gamma},\rho^\mathrm{triv})\ra H^1(T_\gamma^\Z,\rho^\mathrm{triv})
  \end{equation*}
  and thus vanishes. Now let $\rho'_1\subset \rho'$ be the subspace on which $T_\gamma$ acts trivially.
  As $T_\gamma$ is central in the image of $\Modu{S_\gamma}\ra\Modu{S_g^n}$ and acts semisimply,
  $\rho'_1$ is stabilized by $\Modu{S_\gamma}$ and there exists a retraction $\rho'\ra \rho'_1$ of $\Modu{S_\gamma}$-representations.
  Then $H^1(T_\gamma^\Z,\rho')=H^1(T_\gamma^\Z,\rho'_1)$,
  and $H^1(\Modu{S_g^n},\rho)\ra H^1(T_\gamma^\Z,\rho'_1)$ factors as
  \begin{multline*}
    H^1(\Modu{S_g^n},\rho)\ra H^1(\Modu{S_\gamma}\times T_\gamma^\Z,\rho'_1)\\\ra H^0(\Modu{S_\gamma},\rho'_1)\otimes H^1(T_\gamma^\Z,\C)\ra \rho_1'\otimes H^1(T_\gamma^\Z,\C)
  \end{multline*}
  which vanishes as $H^0(\Modu{S_\gamma},\rho')=0$. Hence $H^1(\Modu{S_g^n},\rho)\ra H^1(T_\gamma^\Z,\rho)$ vanishes.
\end{proof}

We now state the consequence of \Cref{corollaryinjectivity} which will be applied to the conjectures.

\begin{proposition}\label{propositionrestrictions}
  Let $g,n\geq 0$ with $2g-2+n>0$ and let $r\geq 1$. Let $\rho$ be a semisimple representation of $\Modl{r}{S_g^n}$.
  If for every embedding $i$ of $S=S_0^4$ or of $S=S_1^1$ into $S_g^n$, $H^1(\Modl{r}{S},i^*\rho)=0$,
  then $H^1(\Modl{r}{S_g^n},\rho)=0$.

  Assume $g,n\geq 3$ and choose $g\geq g'\geq 3$, $n+g-g'\geq n'\geq 3$. If for every embedding $i$ of $S_{g'}^{n'}$ into $S_g^n$,
  $H^1(\Modl{r}{S_{g'}^{n'}},i^*\rho)=0$, then $H^1(\Modl{r}{S_g^n},\rho)=0$.
\end{proposition}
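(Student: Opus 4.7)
The plan is to apply Corollary~\ref{corollaryinjectivity} iteratively, using the identification $H^1(\Modl{r}{S_g^n},\rho)\simeq H^1(\Mgrb{g}{n}{r},\uE)$ from Remark~\ref{remarkboundary}, where $\uE$ is the semisimple local system corresponding to $\rho$. Each of the three classes of maps in the corollary corresponds geometrically to an embedding of a smaller surface into a larger one (gluing on a complementary piece, inserting a pair of pants, or degenerating along a simple closed curve), so composing them produces injections $H^1(\Mgrb{g}{n}{r},\uE)\hookrightarrow H^1(\Mgrb{g'}{n'}{r},i^*\uE)$ associated to embeddings $i:S_{g'}^{n'}\hookrightarrow S_g^n$. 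Once $(g',n')$ falls within the hypothesis of the relevant part of the proposition, the target vanishes and so does the source.

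For part~1, I would induct on the complex dimension $d=3g-3+n\geq 0$. The base cases are $(g,n)\in\{(0,3),(0,4),(1,1)\}$: for $(0,3)$ the space $\Mgrb{0}{3}{r}$ is $B\mu_r^3$ with finite fundamental group, so $H^1$ vanishes automatically over $\C$; for $(0,4)$ and $(1,1)$ the hypothesis is the conclusion. The inductive step has three cases. If $n\geq 4$, apply map~(2) of Corollary~\ref{corollaryinjectivity} to inject into two copies of $H^1(\Mgrb{g}{n-1}{r},\uE')$, corresponding to embeddings $S_g^{n-1}\hookrightarrow S_g^n$ via a pair of pants. If $n\leq 3$ and $g\geq 3$, apply map~(1) with $h=1$ to inject into $H^1(\Mgrb{g-1}{n+1}{r},\uE')$, corresponding to the embedding $S_{g-1}^{n+1}\hookrightarrow S_g^n$ that caps a boundary with $S_1^1$. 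Both decrease $d$. The remaining cases form a finite list $(g,n)$ with $g\in\{1,2\}$, $n\leq 3$, $d\geq 2$; for these apply map~(3) to inject $H^1$ into that of the boundary strata, each of which is either $\Mgrb{g-1}{n+2}{r}$ (non-separating) or a product $\Mgrb{g_1}{n_1+1}{r}\times\Mgrb{g_2}{n_2+1}{r}$ (separating), both of strictly smaller $d$. On a product, the restricted semisimple local system decomposes as a sum of box products $\uE_j\boxtimes\uF_j$, and the Künneth formula expresses $H^1$ as a sum of terms involving $H^1$ on one factor against $H^0$ on the other; the induction then reduces to $H^1$ on the factors. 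Each irreducible summand $\uE_j$ is a direct summand of the restriction of $\uE$ to a subsurface, so the inductive hypothesis (vanishing of $H^1$ over embedded $S_0^4$ and $S_1^1$) propagates to it.

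Part~2 uses only maps~(1) and~(2). Starting from $(g,n)$ with $g,n\geq 3$, apply map~(1) with $h=1$ exactly $g-g'$ times to inject $H^1(\Mgrb{g}{n}{r},\uE)$ into $H^1(\Mgrb{g'}{n+g-g'}{r},\uE')$; this is valid since the genus at each intermediate step stays $\geq g'+1\geq 4$. Then apply map~(2) exactly $(n+g-g')-n'$ times to inject further into $H^1(\Mgrb{g'}{n'}{r},\uE'')$; this is valid since the number of marked points at each intermediate step stays $\geq n'+1\geq 4$. The composite corresponds to an embedding $S_{g'}^{n'}\hookrightarrow S_g^n$, so the hypothesis concludes.

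The main obstacle is the small-genus boundary cases in part~1 (with $g\in\{1,2\}$, $n\leq 3$, $d\geq 2$): one must enumerate the boundary strata appearing for each such $(g,n)$, verify they all have strictly smaller $d$, and track the Künneth decomposition carefully to ensure each resulting $H^1$ contribution is covered by the inductive hypothesis. Preservation of semisimplicity under pullback (Remark~\ref{remarkpullbacksemisimple}) and under passage to direct summands is essential for the induction to close.
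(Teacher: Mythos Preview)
Your proof is correct. For the second statement it matches the paper's argument exactly: apply case~(1) of Corollary~\ref{corollaryinjectivity} $g-g'$ times, then case~(2) $n+g-g'-n'$ times.

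For the first statement you take a slightly different route. The paper uses only case~(3) repeatedly: since $\Mgrb{0}{4}{r}$ and $\Mgrb{1}{1}{r}$ are the unique one-dimensional moduli spaces, iterating case~(3) eventually injects $H^1(\Mgrb{g}{n}{r},\uE)$ into a sum of $H^1$'s over maps $j:\Mgrb{0}{4}{r}\to\Mgrb{g}{n}{r}$ and $j:\Mgrb{1}{1}{r}\to\Mgrb{g}{n}{r}$. This is shorter but leaves implicit the treatment of separating boundary components (which are products), where one must either invoke Lemma~\ref{lemmainjectivity} directly on the product stack or use K\"unneth as you do. Your approach instead uses cases~(1) and~(2) whenever their hypotheses are met, keeping the target a single moduli space and postponing the product issue to a finite list of small $(g,n)$; there you handle products explicitly via K\"unneth and the decomposition of semisimple representations of a product group into box products. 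Both arguments reach the same endpoint; yours is more explicit about the product step, at the cost of an unnecessary case analysis.
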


\begin{proof}
  Let us prove the first statement. Note that by \Cref{propositionnonabHurwitz}, $H^1(\Modl{r}{S_{g,n}},\rho)=H^1(\Mgrb{g}{n}{r},\uE)$
  for some semisimple local system $\uE$. As $\Mgrb{1}{1}{r}$ and $\Mgrb{0}{4}{r}$ are the only DM moduli spaces of dimension $1$,
  by repeated applications of case (3) of \Cref{corollaryinjectivity},
  we see that $H^1(\Mgrb{g}{n}{r},\uE)$ injects into a direct sum of terms of the form $H^1(\Mgrb{1}{1}{r},j^*\uE)$
  for some embedding $j:\Mgrb{1}{1}{r}\ra \Mgrb{g}{n}{r}$ or of the form $H^1(\Mgrb{0}{4}{r},j^*\uE)$
  for some embedding $j:\Mgrb{0}{4}{r}\ra \Mgrb{g}{n}{r}$. At the level of fundamental groups,
  these embeddings $j$ correspond to embeddings $i: S\ra S_g^n$ for $S=S_1^1$ or $S=S_0^4$.

  The second case is obtained similarly by repeated applications of case (1) of \Cref{corollaryinjectivity} ($g-g'$ times, to reduce the genus to $g'$)
  and then of case (2) of \Cref{corollaryinjectivity} ($n+g-g'-n'$ times, to reduce the number of markings to $n'$).
\end{proof}

\begin{remark}
  Note that in the second case of the proof, not all embeddings of $S_{g'}^{n'}$ into $S_g^n$ are used. We will not need this fact.
\end{remark}

\begin{corollary}\label{corollaryconjecturesreducetothreethree}
  Let $(\mathrm{X})$ denote one of $(\mathrm{I})$ (Ivanov conjecture \Cref{conjectureIvanov}),
  $(\mathrm{fT})$ (finite dimensional property (T) conjecture \ref{conjecturefT})
  or $(\mathrm{fD})$ (conjecture \ref{conjecturefD} on finite Dehn twist representations). Then
  \begin{itemize}
    \item $\forall g\geq 3,\forall n\geq 0$, $(\mathrm{X})_{g,n+1}\Rightarrow (\mathrm{X})_{g+1,n}$;
    \item $\forall g\geq 3,\forall n\geq 0$, $(\mathrm{X})_{g,n+1}\Rightarrow (\mathrm{X})_{g,n}$;
    \item $\forall g\geq 3,\forall n\geq 3$, $(\mathrm{X})_{g,n}\Rightarrow (\mathrm{X})_{g,n+1}$.
  \end{itemize}
  In particular, if $(\mathrm{X})_{3,3}$ holds, then $(\mathrm{X})_{g,n}$ holds for all $g\geq 3$, $n\geq 0$.
\end{corollary}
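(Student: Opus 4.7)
The plan is to apply \Cref{corollaryinjectivity} in cases (1) and (3), together with \Cref{propositionrirreleventinH1} to identify $H^1(\Modu{S_g^n}, \rho)$ with $H^1(\Mgrb{g}{n}{r}, \uE)$ for $\uE$ the local system associated to $\rho$. One first observes that, for each of the three conjectures, representations of the relevant class factor through $\Modl{r}{S_g^n}$ for some $r \geq 1$: immediate for $(\mathrm{fD})$; for $(\mathrm{I})$, take $r$ to be the exponent of the finite image; for $(\mathrm{fT})$, use that in genus $g \geq 3$ a linear representation sends Dehn twists to quasi-unipotent elements, which when unitary must be of finite order.

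For implication 1, apply case (1) of \Cref{corollaryinjectivity} with $h = 1$: the hypothesis $g + 1 \geq 2h + 1 = 3$ is satisfied since $g \geq 3$, and the resulting map on fundamental groups is the embedding $\Modu{S_g^{n+1}} \hookrightarrow \Modu{S_{g+1}^n}$ induced by including $S_g^{n+1}$ into $S_{g+1}^n$ as the complement of a chosen once-bordered torus. This yields an injection $H^1(\Mgrb{g+1}{n}{r}, \uE) \hookrightarrow H^1(\Mgrb{g}{n+1}{r}, f^*\uE)$ whose right-hand side vanishes by $(\mathrm{X})_{g, n+1}$, provided the restriction $f^*\rho$ still belongs to class $(\mathrm{X})$. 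Implication 3 is entirely parallel: apply case (2) of \Cref{corollaryinjectivity} with $n$ in the corollary replaced by $n - 1$ (so $n - 1 \geq 2$ amounts to our hypothesis $n \geq 3$). The two maps $\Mgrb{g}{n}{r} \hookrightarrow \Mgrb{g}{n+1}{r}$ correspond to embeddings $\Modu{S_g^n} \hookrightarrow \Modu{S_g^{n+1}}$ via gluing a pair of pants along one boundary, and $(\mathrm{X})_{g, n}$ applied to each summand of the resulting injection yields the vanishing.

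Implication 2 does not require \Cref{corollaryinjectivity}: capping off one boundary circle of $S_g^{n+1}$ with a disk yields a surjection $\pi : \Modu{S_g^{n+1}} \twoheadrightarrow \Modu{S_g^n}$ whose kernel is infinite cyclic, generated by the Dehn twist around the capped circle. The inflation-restriction sequence gives $H^1(\Modu{S_g^n}, \rho) \hookrightarrow H^1(\Modu{S_g^{n+1}}, \pi^*\rho)$, and since $\pi$ is surjective, $\pi^*\rho$ shares its image, and hence its semisimplicity and its class-$(\mathrm{X})$ status, with $\rho$.

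The main point to verify is thus that in implications 1 and 3 the restriction of $\rho$ along the embedding of surfaces remains in class $(\mathrm{X})$. Finite image, unitarity, and the finite-Dehn-twists property are elementary; the delicate point is preservation of semisimplicity for $(\mathrm{fD})$, which is an instance of \Cref{remarkpullbacksemisimple} (pullback of a semisimple local system along an algebraic map of smooth proper Kähler DM stacks is again semisimple, by non-Abelian Hodge theory). Granting all three implications, the final assertion follows by an elementary bootstrap: from $(\mathrm{X})_{3,3}$, implication 3 yields $(\mathrm{X})_{3, n}$ for all $n \geq 3$; implication 2 yields $(\mathrm{X})_{3, n}$ for $n \in \{0, 1, 2\}$; and implication 1 applied inductively in $g$ propagates to all $g \geq 3$, $n \geq 0$.
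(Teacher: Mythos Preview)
Your argument is correct and follows essentially the same route as the paper: implications 1 and 3 via cases (1) and (2) of \Cref{corollaryinjectivity} (the paper packages these into \Cref{propositionrestrictions} first, but the content is identical), and implication 2 via injectivity of inflation along the surjection $\Modu{S_g^{n+1}}\twoheadrightarrow\Modu{S_g^n}$. Two small corrections: in your opening sentence you write ``cases (1) and (3)'' but then (correctly) use case (2) for implication 3; and the kernel of the capping map $\Modu{S_g^{n+1}}\to\Modu{S_g^n}$ is not infinite cyclic---it is the disk-pushing subgroup, an extension of $\pi_1(S_g^n)$ by $\Z$---but this is harmless since your argument only uses surjectivity of $\pi$, which already gives injectivity of inflation.
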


\begin{proof}
  The implications $(\mathrm{X})_{g,n+1}\Rightarrow (\mathrm{X})_{g+1,n}$ and $(\mathrm{X})_{g,n}\Rightarrow (\mathrm{X})_{g,n+1}$ follow from the second statement
  of \Cref{propositionrestrictions} and the fact that the finite image/ unitarity/ semisimple finite Dehn twist properties are stable
  by pullback under maps between mapping class groups induced by embeddings of surfaces (for semisimplicity, see \Cref{remarkpullbacksemisimple} and the proof
  of \Cref{propositionrirreleventinH1}).
  The implication $(\mathrm{X})_{g,n+1}\Rightarrow (\mathrm{X})_{g,n}$ follows from the Birman exact sequence and inflation restriction
  (see the proof of \cite[Lem. 7.1]{godfardRigidityFibonacciRepresentations2025}).
\end{proof}

\begin{remark}\label{remarkPWpaper}
  This result is not surprising to experts, although, to our knowledge, it has not been written down before.
  In the case where $(\mathrm{X})=(\mathrm{I})$,
  implications\footnote{The $2$ is not a typographic mistake.} $(\mathrm{I})_{g,n+2}\Rightarrow (\mathrm{I})_{g+1,n}$ and $(\mathrm{I})_{g,n+1}\Rightarrow (\mathrm{I})_{g,n}$
  are proven in \cite[step 2 of Lem. 3.2, Lem. 3.1]{putmanAbelianQuotientsSubgroups2013}.
  Their proof of $(\mathrm{I})_{g,n+2}\Rightarrow (\mathrm{I})_{g+1,n}$ goes through the work \cite[Lem. 2.6]{boggiProfiniteTeichmuellerTheory2006} of Boggi,
  in which the crucial argument is provided by mixed Hodge theory.
\end{remark}


\subsection{Rigidity of Fibonacci representations, revisited}


To conclude this section, and this article, we deduce directly from \Cref{propositionrestrictions} a quick proof of rigidity of Fibonacci representations
of mapping class groups, which was the subject of the article \cite[Th. 3.3]{godfardRigidityFibonacciRepresentations2025}.
The general idea of the proof remains the same: reduce rigidity for all $g,n\geq 0$ to a finite number of cases in genus $0$ and $1$.
However, the application of \Cref{corollaryinjectivity}, which generalizes \cite[Lem. 6.1]{godfardRigidityFibonacciRepresentations2025}
and its other variants used in that paper, substantially simplifies the proof. As a bonus, we extend the result to the $\SU$ case in level $10$.

As mentioned in \cite[Rmk. 1.2]{godfardRigidityFibonacciRepresentations2025}, there is little hope to generalize
this quick proof to some $\ell>5$, as in these cases, rigidity fails for $(g,n)=(0,4)$.

\begin{corollary}[{Rigidity of Fibonacci representations}]
  Let $\Nu=\Nu_5$ (level $10$ $\SU$ modular functor) or $\Nu=\Nu_5^\mathrm{ev}$ (level $5$ $\SO$ modular functor).
  Then for any $g,n\geq 0$ and $\ul\in\Lambda^n$, the $\Modl{5}{S_g^n}$ projective representation on $\Nu_g(\ul)$ is cohomologically rigid.
  For $g\geq 3$, it is also rigid as a representation of $\Modu{S_g^n}$.
\end{corollary}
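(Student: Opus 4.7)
The plan is to apply the first case of \Cref{propositionrestrictions} to the adjoint representation of $\Nu_g(\ul)$, reducing cohomological rigidity on $\Modl{5}{S_g^n}$ to rigidity for the finitely many base cases indexed by genus $0$ with $4$ boundary components and genus $1$ with $1$ boundary component. To invoke that proposition I first need semisimplicity of $\ad\Nu_g(\ul)$, which follows at once from the irreducibility of $\Nu_g(\ul)$ recalled in \Cref{propositionirreducibility} (and of all its restrictions, by \Cref{remarkpullbacksemisimple}).

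For any embedding $i:S\hookrightarrow S_g^n$ with $S=S_0^4$ or $S=S_1^1$, realize $S_g^n$ as a gluing of $S$ with a complementary surface $\Sigma$. Repeated use of the gluing axiom yields a decomposition
\[
i^*\Nu_g(\ul)\simeq\bigoplus_\umu \Nu(S,\ul_S,\umu)\otimes \Nu(\Sigma,\ul_\Sigma,\umu),
\]
where the second tensor factor is regarded as a multiplicity space for the action on $S$. Passing to adjoints gives a double sum over $(\umu,\unu)$; since both $\Nu_5$ and $\Nu_5^\mathrm{ev}$ satisfy property \textbf{(II)} of \Cref{propositionSOmodularfunctor}, \Cref{lemmacenterkills} kills all $\umu\neq\unu$ contributions to $H^1$, and the Künneth decomposition of \Cref{lemmaKunneth} reduces the diagonal contributions to rigidity of the individual factors $\ad\Nu(S,\umu)$. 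Thus the vanishing of $H^1(\Modl{5}{S_g^n},\ad\Nu_g(\ul))$ is reduced to the vanishing of $H^1(\Modl{5}{S_0^4},\ad\Nu_0(a,b,c,d))$ for $(a,b,c,d)\in\Lambda^4$ and of $H^1(\Modl{5}{S_1^1},\ad\Nu_1(a))$ for $a\in\Lambda$.

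Since $\Lambda=\Lambda_5=\{0,1,2,3\}$ or $\Lambda=\Lambda_5^\mathrm{ev}=\{0,2\}$, and the nonvanishing colorings are further constrained by \Cref{colourconditions}, this is a finite check. The relevant groups $\Modl{5}{S_0^4}$ and $\Modl{5}{S_1^1}$ are explicit finite-type quotients of the spherical $4$-strand braid group and of $SL_2(\Z)$, respectively, and the Fibonacci representations involved are low-dimensional. The $\Nu_5^\mathrm{ev}$ cases are verified in \cite{godfardRigidityFibonacciRepresentations2025}; the extra cases arising for $\Nu_5$ are handled by analogous direct computations. This finite base-case computation is the main obstacle — everything else is a structural reduction via \Cref{propositionrestrictions} and the Künneth/vanishing lemmas.

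Finally, the upgrade from $\Modl{5}{S_g^n}$ to $\Modu{S_g^n}$ for $g\geq 3$ is immediate from \Cref{propositionrirreleventinH1}.
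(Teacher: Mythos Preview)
Your structural reduction is exactly the paper's: apply the first part of \Cref{propositionrestrictions} to $\ad\Nu_g(\ul)$, then use the gluing axiom together with \Cref{lemmacenterkills} and \Cref{lemmaKunneth} (packaged as \Cref{corollarycenterkillsandKunneth}) to reduce to $H^1(\Modl{5}{S_1^1},\ad\Nu_1(\mu))$ and $H^1(\Modl{5}{S_0^4},\ad\Nu_0(\umu))$, and finish with \Cref{propositionrirreleventinH1} for the $g\ge 3$ upgrade.

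The gap is in your treatment of the base cases, especially for $\Nu_5$. Deferring to \cite{godfardRigidityFibonacciRepresentations2025} plus ``analogous direct computations'' is not a proof: the $\SU$ side has strictly more colorings, and the whole point of the corollary here is that the extension to $\Nu_5$ comes for free from two observations you do not make. First, for $(g,n)=(1,1)$ the paper notes that $\Modl{5}{S_{1,1}}=\pi_1(\Mgrp{1}{1}{5})$ is the spherical triangle group $T(2,3,5)$, hence finite, so $\Modl{5}{S_1^1}$ is finite and \emph{every} representation is rigid --- no per-color computation is needed. Second, for $(g,n)=(0,4)$ the paper uses that in genus $0$ the level-$2\ell$ $\SU$ and level-$\ell$ $\SO$ theories coincide \cite[Th.~1.5]{blanchetTopologicalQuantumField1995}, and the color symmetry $\Nu_0(\mu_1,\mu_2,\mu_3,\mu_4)\simeq\Nu_0(3-\mu_1,3-\mu_2,\mu_3,\mu_4)$ reduces every $\umu\in\Lambda_5^4$ to one with all entries in $\{0,2\}$; then \Cref{propositionvacuum} disposes of any $\umu$ containing a $0$, leaving only $\Nu_0(2,2,2,2)$, which is the single $2$-dimensional case already handled in \cite[Lem.~7.3]{godfardRigidityFibonacciRepresentations2025}. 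Without these two reductions your ``finite check'' for $\Nu_5$ is neither carried out nor shown to reduce to the known $\SO$ computation.
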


\begin{proof}
  Fix $g,n\geq 0$ and $\ul\in\Lambda^n$.
  Applying the first statement of \Cref{propositionrestrictions}, we see that we need only prove that the restriction
  of $\ad\Nu_g(\ul)$ to embedded $S_0^4$ and $S_1^1$ inside $S_g^n$ has no $H^1$.
  However, by the gluing axioms and \Cref{corollarycenterkillsandKunneth},
  these pullbacks have direct sum decompositions, whose only summands with potentially non-zero $H^1$
  are of the form $\ad\Nu_0(\umu)$, $\umu\in\Lambda^4$ or $\ad\Nu_1(\mu)$, $\mu\in\Lambda$.
  Hence we are reduced to the cases $(g,n)=(0,4)$ and $(g,n)=(1,1)$.
  The group $\Modl{5}{S_{1,1}}$ is the fundamental group of $\Mgrp{1}{1}{5}$, hence isomorphic to the triangular group $T(2,3,5)$,
  which is spherical ($\frac 1 2 + \frac 1 3 +\frac  1 5>1$) and hence finite. Hence so is $\Modl{5}{S_1^1}$, and all of its representations are rigid.
  Let us turn to $\Nu_0(\umu)$, $\umu\in\Lambda^4$. The representations $\Nu_0(\mu_1,\mu_2,\mu_3,\mu_4)$ and $\Nu_0(3-\mu_1,3-\mu_2,\mu_3,\mu_4)$ are isomorphic
  (special case of a general fact about $\SO$ modular functors, see \cite[4.13-4.16]{blanchetTopologicalQuantumField1995}, in genus $0$,
  level $r$ $\SO$ and level $2r$ $\SU$ theories match, see \cite[Th. 1.5]{blanchetTopologicalQuantumField1995}).
  Hence we can assume that the $\mu_i$ are all $0$ or $2$.
  If one of the $\mu_i$ is $0$, then by \Cref{propositionvacuum}, the $H^1$ vanishes.
  We are left with the case of $\Nu_0(2,2,2,2)$, which is $2$-dimensional and can be treated by hand,
  see \cite[Lem. 7.3]{godfardRigidityFibonacciRepresentations2025}.
  Alternatively, one can see that $\Modl{5}{S_{0,4}}$ is isomorphic to the triangular group $T(5,5,5)$
  and that the representation $\Nu_0(2,2,2,2)$ is valued, up to Galois conjugation, in $\PU{1}{1}$,
  and conclude by using that hyperbolic representations of triangle groups are rigid.\footnote{This argument for $\Nu_0(2,2,2,2)$ is due to Julien Marché.}
  
  The second statement follows from the first and \Cref{propositionrirreleventinH1}.
\end{proof}


\bibliographystyle{plain}
\bibliography{biblio}

\end{document}